\newtheorem{thm}{Theorem}[section]
\newtheorem{cor}[thm]{Corollary}
\newtheorem{lem}[thm]{Lemma}
\newtheorem{prop}[thm]{Proposition}
\newtheorem{example}[thm]{Example}
\newtheorem{remarks}[thm]{Remarks}
\newtheorem{defn}[thm]{Definition}
\numberwithin{equation}{section}
\title[Transportation-information inequalities]{Transportation-information inequalities for Markov processes}
\author{Arnaud Guillin}
\address{Arnaud Guillin. Ecole Centrale de Marseille et LATP,
Centre de Math\'ematiques et Informatique. Technop\^ole de
Ch\^ateau-Gombert. 13453 Marseille, France}
\email{guillin@cmi.univ-mrs.fr}
\author{Christian L\'eonard}
\address{Christian L\'eonard. Modal-X, Universit\'e Paris 10. B\^atiment G, 200 avenue de la R\'epublique.92001 Nanterre, France}
\address{Christian L\'eonard. CMAP, \'Ecole Polytechnique. 91128 Palaiseau, France}
\email{christian.leonard@polytechnique.fr}
\author{Liming Wu}
\address{Liming Wu. Laboratoire de Math\'ematiques Appliqu\'ees, CNRS-UMR
6620, Universit\'e Blaise Pascal, 63177 Aubi\`ere, France. And
Department of Mathematics, Wuhan University, 430072 Hubei, China}
\email{Li-Ming.Wu@math.univ-bpclermont.fr}
\author{Nian Yao}
\address{Nian Yao. Department of Mathematics, Wuhan University, 430072 Hubei, China}
\date{May 02, 2007}
\newcommand{\dd}{\mathbb{D}}
\newcommand{\ee}{\mathbb{E}}
\newcommand{\nn}{\mathbb{N}}
\newcommand{\rr}{\mathbb{R}}
\newcommand{\pp}{\mathbb{P}}
\newcommand{\Var}{\mathrm{Var}}
\def\AA{\mathcal A}
\def\BB{\mathcal B}
\def\CC{\mathcal C}
\def\DD{\mathcal D}
\def\FF{\mathcal F}
\def\EE{\mathcal E}
\def\LL{\mathcal L}
\def\MM{\mathcal M}
\def\NN{\mathcal N}
\def\XX{\mathcal X}
\def\SG{c_{\mathrm{P}}}
\def\LS{c_{\mathrm{LS}}}
\def\WI{c_{\mathrm{W_1I}}}
\def\WWI{c_{\mathrm{W_2I}}}
\def\vep{\varepsilon}
\def\<{\langle}
\def\>{\rangle}
\def\brmk{\begin{remarks}}
\def\nrmk{\end{remarks}}
\def\bexa{\begin{example}}
\def\nexa{\end{example}}
\def\bcor{\begin{cor}}
\def\ncor{\end{cor}}
\def\bexe{\begin{exe}}
\def\nexe{\end{exe}}
\def\bprf{\begin{proof}}
\def\nprf{\end{proof}}
\def\dsp{\displaystyle}
\def\bdes{\begin{description}}
\def\ndes{\end{description}}
\begin{document}

\begin{abstract}
In this paper, one investigates the following type of
transportation-information $T_cI$ inequalities:
$\alpha(T_c(\nu,\mu))\le I(\nu|\mu)$ for all probability measures
$\nu$ on some metric space $(\XX, d)$, where $\mu$ is a given
probability measure, $T_c(\nu,\mu)$ is the transportation cost
from $\nu$ to $\mu$ with respect to some cost function $c(x,y)$ on
$\XX^2$,  $I(\nu|\mu)$ is the Fisher-Donsker-Varadhan information
of $\nu$ with respect to $\mu$ and $\alpha: [0,\infty)\to
[0,\infty]$ is some left continuous increasing function. Using
large deviation techniques, it is shown that  $T_cI$ is equivalent
to some concentration inequality for the occupation measure of a
$\mu$-reversible ergodic Markov process related to $I(\cdot|\mu)$,
a counterpart of the characterizations of transportation-entropy
inequalities, recently obtained by Gozlan and L\'eonard in the
i.i.d.\! case  \cite{GL}. Tensorization properties of $T_cI$ are
also derived.
\\
Let $d$ be a metric. One denotes $W_1I:=T_dI$ and $W_2I:=T_{d^2}I$
the transportation-information inequalities associated with the
metric cost $c=d$ and the quadratic cost $c=d^2.$
\\
It is proved that $W_2I$ is stronger than Poincar\'e inequality,
weaker than log-Sobolev inequality, and equivalent to it when
Bakry-Emery's curvature is bounded from below.
\\
For the trivial metric cost $d,$ one establishes the sharp
transportation-information inequality $W_1I$ in terms of the
spectral gap. In particular, a Hoeffding type concentration
inequality for Markov processes is derived and one shows that
$W_1I$ implies a Poincar\'e inequality.
\\
For a general metric cost $d,$ it is established that the spectral
gap in the space of Lipschitz functions of the Markov diffusion
process implies  $W_1I.$ A sharp estimate of the constant is
obtained for general one-dimensional diffusion processes. Finally,
a Lyapunov function condition for $W_1I$ is proposed. It may be
applied to a wide class of examples; some examples are worked out.
\end{abstract}

\maketitle

\tableofcontents

\section{Introduction}

Let $M_1(\XX)$ be the space of all probability measures on some
complete separable metric space $(\XX,d)$ and consider the {\it
cost function} $c(x,y):\XX^2\to [0,+\infty]$ with $c(x,x)=0$ (for
all $x\in \XX$), which is lower semicontinuous on $\XX^2$. Given
$\mu,\nu\in M_1(\XX)$,  the {\it transportation cost} $T_c(\nu,
\mu)$ from $\nu$ to $\mu$ with respect to the cost function $c$ is
defined by

\begin{equation}\label{cost}
T_c(\nu,\mu)=\inf_{\pi\in M_1(\XX^2):\pi_0=\nu, \pi_1=\mu}
\iint_{\XX^2} c(x,y)\, \pi(dx, dy)
\end{equation}
where $\pi_0(dx)=\pi(dx\times \XX),\ \pi_1(dy)=\pi(\XX\times dy)$
are the marginal distributions of $\pi$. When $c(x,y)=d^p(x,y)$
where $p\ge 1$, $(T_c(\nu,\mu))^{1/p}=W_p(\nu, \mu)$ is the $L^p$-
Wasserstein distance between $\nu$ and $\mu$.
\\
The relative entropy (or Kullback information) of $\nu$ with
respect to $\mu$ is given by

\begin{equation}\label{H}
H(\nu|\mu):= \begin{cases}
\dsp\int_\XX f\log f \,d\mu, \ \ &\text{if } \ \nu\ll\mu \ \text{and} \ f:=\frac{d\nu}{d\mu}\\
+\infty, &\text{otherwise.}
\end{cases}
\end{equation} The usual transportation inequalities for a given $\mu\in
M_1(\XX)$, introduced by K.~Marton \cite{Mar96, Mar97} and
M.~Talagrand \cite{Tal96a}, compare the Wasserstein metric
$W_p(\nu, \mu)$ with the relative entropy $H(\nu|\mu)$. The
following extension of these inequalities:
\begin{equation}
    \alpha(T_c(\nu,\mu)) \le H(\nu|\mu), \ \forall \nu\in M_1(\XX),
    \tag{$T_cH$}
\end{equation}
has recently been proposed and developed by Gozlan and L\'eonard
\cite{GL}. Here $\alpha: [0,\infty)\to [0,+\infty]$ is some left
continuous and increasing function with $\alpha(0)=0$.
\\
 Let us denote
\begin{equation}\label{b1}
    \alpha^\circledast(\lambda):=\sup_{r\ge 0}(\lambda r -
\alpha(r))
\end{equation}
the monotone conjugate of $\alpha.$ With $\alpha$ as above, one
sees that $\alpha^\circledast$ is the restriction to $[0,\infty)$
of the usual convex conjugate
$\tilde{\alpha}^*(\lambda)=\sup_{r\in\mathbb{R}}(\lambda r -
\tilde{\alpha}(r))$ of
$\tilde{\alpha}(r)=\mathbf{1}_{r\ge0}\alpha(r),$ $r\in
\mathbb{R}.$ We also denote $\mu(v):=\int_\XX v d\mu.$

\par\medskip\noindent\textbf{Notation.}\ In the special case
$c(x,y)=d^p(x,y)$ where $p\ge1$ and $d$ is a metric,
$T_c(\nu,\mu)=W_p(\nu, \mu)^{p}.$ We shall use the notation $W_pH$
instead of $T_{d^p}H.$ In particular, $W_1H$ stands for $T_dH.$

As an extension of the Bobkov-G\"otze criterion \cite{BG99}, we
have

\begin{thm}[Gozlan-L\'eonard \cite{GL}]\label{thm-GL}  Let $(X_n)_{n\in
\nn}$ be a sequence of $\XX$-valued i.i.d.\! random variables with
common law $\mu$ and $\alpha$ be moreover convex. Then the
following properties are equivalent:
\begin{enumerate}
\item[(a)] The transportation inequality $T_cH$ holds;

\item[(b)] For any couple of bounded and measurable functions $u,
v : \XX\to \rr$ such that $u(x)-v(y)\le c(x,y)$ over $\XX^2$,
$$
\log \int_\XX e^{\lambda u} d\mu\le \lambda \mu(v) +
\alpha^\circledast (\lambda), \ \forall \lambda\ge 0;
$$

\item[(c)] For all $n\ge 1$ and $r>0$ and for any couple of
bounded and measurable functions $u, v : \XX\to \rr$ such that
$u(x)-v(y)\le c(x,y)$ over $\XX^2$,  the following concentration
inequality holds

$$
\pp\left( \frac 1n\sum_{k=1}^n u(X_k)\ge \mu(v) + r\right)\le
e^{-n \alpha(r)};
$$

\item[(c')] The following large deviation upper bound holds for
any couple of bounded and measurable functions $u, v : \XX\to \rr$
such that $u(x)-v(y)\le c(x,y)$ over $\XX^2$,

$$
\limsup_{n\to \infty} \frac 1n\log  \pp\left( \frac 1n\sum_{k=1}^n
u(X_k)\ge \mu(v) + r\right)\le -\alpha(r), \ \forall r\ge0.
$$
\end{enumerate}
\end{thm}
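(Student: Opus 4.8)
The plan is to prove the chain $(a)\Leftrightarrow(b)$ and $(b)\Rightarrow(c)\Rightarrow(c')\Rightarrow(b)$.

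\emph{The equivalence $(a)\Leftrightarrow(b)$.} The two tools are the Kantorovich--Rubinstein duality
$$T_c(\nu,\mu)=\sup\Big\{\int_\XX u\,d\nu-\int_\XX v\,d\mu:\ u,v\ \text{bounded measurable},\ u(x)-v(y)\le c(x,y)\ \forall x,y\in\XX\Big\}$$
(valid for a nonnegative lower semicontinuous cost; one reduces to \emph{bounded} $u,v$ by truncating $c$ to $c\wedge N$ and letting $N\to\infty$) and the Donsker--Varadhan/Gibbs variational identity $\log\int_\XX e^{\lambda u}\,d\mu=\sup_\nu\big(\lambda\int_\XX u\,d\nu-H(\nu|\mu)\big)$. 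Since $\alpha$ is convex, left continuous, increasing with $\alpha(0)=0$, it coincides with its monotone biconjugate, i.e.\ $\alpha(r)=\sup_{\lambda\ge0}(\lambda r-\alpha^\circledast(\lambda))$ (as noted after \eqref{b1}). Hence $T_cH$ is equivalent to: for every $\nu$ and every $\lambda\ge0$, $\lambda T_c(\nu,\mu)\le H(\nu|\mu)+\alpha^\circledast(\lambda)$. Fixing $\lambda\ge0$, inserting $\lambda T_c(\nu,\mu)=\sup_{u,v}\big(\int\lambda u\,d\nu-\lambda\mu(v)\big)$, and taking the supremum over $\nu$ (which, by the Gibbs formula, turns $\int\lambda u\,d\nu-H(\nu|\mu)$ into $\log\int e^{\lambda u}\,d\mu$) converts the statement into precisely $(b)$; every step is an equivalence, so the converse follows by reading the chain backwards.

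\emph{$(b)\Rightarrow(c)\Rightarrow(c')$.} The first implication is a Chernoff bound: for $\lambda\ge0$, by independence,
$$\pp\Big(\frac1n\sum_{k=1}^nu(X_k)\ge\mu(v)+r\Big)\le e^{-n\lambda(\mu(v)+r)}\Big(\int_\XX e^{\lambda u}\,d\mu\Big)^n\le e^{-n(\lambda r-\alpha^\circledast(\lambda))},$$
using $(b)$ in the last step; optimising over $\lambda\ge0$ and invoking $\sup_{\lambda\ge0}(\lambda r-\alpha^\circledast(\lambda))=\alpha(r)$ gives $(c)$. Then $(c)\Rightarrow(c')$ is immediate upon taking $\frac1n\log$ and $\limsup$.

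\emph{$(c')\Rightarrow(b)$.} Fix admissible $u,v$ and set $\Lambda_u(\lambda):=\log\int_\XX e^{\lambda u}\,d\mu$, a finite convex function since $u$ is bounded, with Legendre conjugate $\Lambda_u^*$. Taking $x=y$ in $u(x)-v(y)\le c(x,y)$ yields $u\le v$, hence $\mu(u)\le\mu(v)$, so for $r>0$ the point $\mu(v)+r$ lies strictly to the right of the mean $\mu(u)$. The large deviation lower bound of Cramér's theorem for the i.i.d.\ bounded variables $u(X_k)$ then gives $\limsup_{n}\frac1n\log\pp\big(\frac1n\sum_{k=1}^nu(X_k)\ge\mu(v)+r\big)\ge-\Lambda_u^*(\mu(v)+r)$, so with $(c')$ we obtain $\Lambda_u^*(\mu(v)+r)\ge\alpha(r)$ for all $r>0$, and for $r=0$ by left continuity of $\alpha$ together with convexity of $\Lambda_u^*$. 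Finally, using $\Lambda_u=\Lambda_u^{**}$ and substituting $x=\mu(v)+r$,
$$\Lambda_u(\lambda)=\sup_{r\in\rr}\big(\lambda(\mu(v)+r)-\Lambda_u^*(\mu(v)+r)\big)=\lambda\mu(v)+\sup_{r\in\rr}\big(\lambda r-\Lambda_u^*(\mu(v)+r)\big).$$
For $r\le0$ the bracket is $\le0\le\alpha^\circledast(\lambda)$ (because $\Lambda_u^*\ge0$ and $\lambda\ge0$), and for $r>0$ it is $\le\lambda r-\alpha(r)\le\alpha^\circledast(\lambda)$. Therefore $\Lambda_u(\lambda)\le\lambda\mu(v)+\alpha^\circledast(\lambda)$, which is $(b)$.

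\emph{Main obstacle.} The delicate part is $(c')\Rightarrow(b)$: one must apply the large deviation \emph{lower} bound in the right half-line and keep careful track of the endpoint behaviour (the right end of the support of $u$, the case $r=0$, the possibility $\mu(u)=\mu(v)$) while juggling two layers of conjugation, $\alpha\leftrightarrow\alpha^\circledast$ and $\Lambda_u\leftrightarrow\Lambda_u^*$. A secondary technical point is securing the Kantorovich--Rubinstein duality for a merely lower semicontinuous cost with \emph{bounded} test functions, which is what makes the passage to the log-Laplace transform legitimate.
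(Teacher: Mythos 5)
Your proof is correct in substance and it takes a genuinely different route to close the cycle of implications than the one the paper follows (for the Markov analogue, Theorem~\ref{thm-a2.1}, whose proof mirrors Gozlan--L\'eonard's). The paper --- and, as it explicitly says, [GL] --- closes the loop via $(c')\Rightarrow(a)$ using the large deviation lower bound for the \emph{empirical measure} (Sanov's theorem in the i.i.d.\ case), getting directly $\inf\{H(\nu|\mu):\nu(u)>\mu(v)+r\}\ge\alpha(r)$ and then passing to $T_cH$ by choosing a near-optimal Kantorovich pair $(u_n,v_n)$ and invoking left-continuity of $\alpha$. You instead close via $(c')\Rightarrow(b)$ using \emph{Cram\'er's theorem} for the scalar sample means $\frac1n\sum u(X_k)$ and a second biconjugation $\Lambda_u=\Lambda_u^{**}$; this is a more elementary tool and stays entirely at the level of real-variable convex duality, at the cost of running two layers of Legendre transform in parallel. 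Both roads meet at $(a)\Leftrightarrow(b)$, which you establish as a chain of equivalences (Kantorovich duality, the Donsker--Varadhan/Gibbs formula, and biconjugation of $\alpha$) --- essentially the same computation the paper carries out in the proof of Theorem~\ref{thm-a2.1}, just without routing through the one-dimensional rate function $I_u$.

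One technical point in your $(c')\Rightarrow(b)$ deserves a correction. Cram\'er's lower bound is for \emph{open} sets, so the correct consequence is
$$\limsup_{n}\frac1n\log\pp\Big(\frac1n\sum_{k=1}^nu(X_k)\ge\mu(v)+r\Big)\ \ge\ -\inf_{x>\mu(v)+r}\Lambda_u^*(x),$$
and since $\Lambda_u^*$ is nondecreasing to the right of its minimiser $\mu(u)\le\mu(v)$, the right-hand side is $\ge -\Lambda_u^*(\mu(v)+r)$, which is the \emph{wrong} direction to conclude $\Lambda_u^*(\mu(v)+r)\ge\alpha(r)$ directly. The fix is exactly the left-continuity of $\alpha$ that you gesture at: $(c')$ gives $\inf_{x>\mu(v)+r'}\Lambda_u^*(x)\ge\alpha(r')$ for all $r'$, hence $\Lambda_u^*(\mu(v)+r)\ge\alpha(r')$ for every $r'<r$, and letting $r'\uparrow r$ yields $\Lambda_u^*(\mu(v)+r)\ge\alpha(r)$ for $r>0$ (the case $r=0$ being trivial since $\Lambda_u^*\ge0=\alpha(0)$). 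With that repair, the remainder of your $(c')\Rightarrow(b)$ argument --- splitting $\sup_{r\in\rr}(\lambda r-\Lambda_u^*(\mu(v)+r))$ over $r\le0$ and $r>0$ --- is clean and correct.
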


\subsection*{The main purpose of this paper}
In this paper, instead of the transportation-entropy inequality
$T_cH,$ one investigates the following transportation-information
inequality
\begin{equation}
    \alpha(T_c(\nu,\mu))\le I(\nu|\mu), \ \forall \nu\in M_1(\XX)
    \tag{$T_cI$}
\end{equation}
for some given probability measure $\mu$. Here $I(\nu|\mu)$ is the
Fisher-Donsker-Varadhan information of $\nu$ with respect to $\mu$
\begin{equation}\label{FDV} I(\nu|\mu)=
  \begin{cases}
    \EE(\sqrt f, \sqrt f) & \text{if } \ \nu=f\mu,\ \sqrt f\in \dd(\EE) \\
    +\infty & \text{otherwise}
  \end{cases}
 \end{equation}
associated with the Dirichlet form $\EE$ on $L^2(\mu)$ with domain
$\dd(\EE).$

\par\medskip\noindent\textbf{Notation.}\
 In the special case where  $c(x,y)=d^p(x,y),$ we use the notation
$W_pI$ instead of $T_{d^p}I.$ In particular, $W_1I$ stands for
$T_dI.$

\subsection*{Organization of the paper}
This paper is organized as follows. In the next section we
characterize $T_cI$ by means of concentration inequalities for the
empirical means $L_t(u)=\frac{1}{t}\int_0^t u(X_s)\,ds$ of
observables $u$ along the symmetric Markov process $(X_t)$
associated with the Dirichlet form $\EE$, extending Theorem
\ref{thm-GL} from i.i.d.\! sequences to time-continuous Markov
processes. The method of proof is borrowed from Gozlan and
L\'eonard \cite{GL} who proved Theorem \ref{thm-GL}  by means of
large deviations of the empirical measure of an i.i.d.\! sequence.
In the present paper, it relies on the large deviations of the
occupation measure of $(X_t).$ The tensorization of $T_cI$ is
proved, and the relations between $W_2I$, Poincar\'e and
log-Sobolev are exhibited with the help of Otto-Villani
\cite{OVill00}.

In Section 3, we prove $W_1I$ for the trivial metric
$d(x,y)=\mathbf{1}_{x\ne y}$ with the sharp constant in terms of
the spectral gap and derive a sharp Hoeffding concentration
inequality for Markov processes. Furthermore, we also prove that
$W_1I$ implies the existence of a positive spectral gap in the
symmetric and uniform positive improving case by means of a result
of L.~Wu \cite{p-Wu01}.

For a general metric, using Lyons-Meyer-Zheng forward-backward
martingale decomposition, we obtain in Section 4 sharp $W_1I$
under the spectral gap existence of the Markov diffusion process
in the space of Lipschitz functions, and an explicit and sharp
constant is provided for one-dimensional diffusions.

Finally in Section 5 we propose a practical Lyapunov condition for
$W_1I$ (or a more general $T_\Phi I$), which, although not
providing the sharp constant, provides a good order.

\subsection*{About the literature}
Let us give some historical notes on the usual transportation
inequality $W_pH$. K.~Marton \cite{Mar96} first noticed that
$W_1H$ implies the concentration inequality for $\mu$ by a very
elementary and neat argument, and she established $W_1H$ for the
law of a Dobrushin-contractive Markov chain in \cite{Mar97}.
M.~Talagrand \cite{Tal96a} established $W_2H$ for the Gaussian
measure $\mu$ with $\alpha(r)=r/2C$ and provided the sharp
constant $C$ (this particular case of $T_cH$ is often called
Talagrand's transportation inequality). Bobkov and G\"otze
\cite{BG99} obtained the characterization of $W_pH$ in Theorem
\ref{thm-GL} with [$p=1,$ $\alpha$ quadratic] and [$p=2,$ $\alpha$
linear]. Otto and Villani \cite{OVill00} proved that the
log-Sobolev inequality is stronger than Talagrand's transportation
inequality and presented a differential geometrical point of view
on $M_1(\XX)$ equipped with the $W_2$-metric. Bobkov, Gentil and
Ledoux \cite{BGL01} shed light on a profound relation between
Talagrand's transportation inequality, log-Sobolev inequality,
inf-convolution and some Hamilton-Jacobi equation. Djellout,
Guillin and Wu \cite{DGW03} obtained a necessary and sufficient
condition for $W_1H$ with a quadratic $\alpha$  by means of the
Gaussian integrability of $d(x,x_0)$ under $\mu$, and gave a
direct proof of Talagrand's transportation inequality for the law
of a diffusion process by means of Girsanov's formula, without
appealing to log-Sobolev inequality. Bolley and Villani
\cite{BV05} and later Gozlan and L\'eonard \cite{GL} refined the
result of \cite{DGW03} under a Gaussian integrability condition.
Cattiaux and Guillin \cite{p-CG04} constructed  the first example
for which Talagrand's transportation inequality holds but not
log-Sobolev inequality, and Gozlan \cite{p-Goz06} found a
necessary and sufficient condition for Talagrand's transportation
inequality with $\mu(dx)=e^{-V(x)} dx$ on $\rr$ when the
Bakry-Emery curvature $V^{\prime\prime}$ is lower bounded.
Otto-Villani's differential geometrical point of view on
$M_1(\XX)$ equipped with the $W_2$-metric is very fruitful, as
developed by the recent works of Sturm \cite{St06a, St06b} and
Lott and Villani \cite{p-LV06}. The reader is referred to the
textbooks by Ledoux \cite{Led01} and Villani \cite{Vill03,
p-Vill05} for further references pertaining to this very active
field.

\subsection*{Convention and notation}
Throughout this paper either $(\XX,d)$ is a complete separable
metric space with the associated Borel $\sigma$-field $\BB.$
\begin{enumerate}[-]
    \item The space of all real  bounded and $\BB$-measurable functions is
denoted by $b\BB$.
    \item The functions to be considered later are assumed to
    be measurable without warning.
    \item For $\mu,\nu\in M_1(\XX)$,
$\|\nu-\mu\|_{TV}:=\sup_{u: |u|\le 1}\int u\,d(\nu-\mu)$ is the
total variation norm.
    \item Throughout this paper a cost function $c$ is a nonnegative lower
semicontinuous function on $\XX^2$ such that $c(x,x)=0$ for all
$x\in \XX$.
\end{enumerate}

\section{General results on $T_cI$}

\subsection{Markov processes, Fisher-Donsker-Varadhan information and Feynman-Kac semigroup}
The main probabilistic object to be considered in this paper is an
$\XX$-valued time-continuous Markov process $(\Omega, \FF,
(X_t)_{t\ge0}, (\pp_x)_{x\in \XX})$  with an invariant probability
measure $\mu.$ The transition semigroup is denoted
$(P_t)_{t\ge0}.$

\subsubsection*{Assumption: Ergodicity}
It is assumed that the invariant probability measure $\mu$ is
ergodic: if $f\in b\BB$ satisfies $P_tf=f,$ $\mu$-a.e. for all
$t\ge0$, then $f$ is constant $\mu$-a.e. Denoting
$\pp_\beta(\cdot):=\int_\XX \pp_x(\cdot)\,\beta(dx)$ for any
initial probability measure $\beta,$ the previous condition on
$\mu$ amounts to stating that $((X_t)_{t\ge0}, \pp_\mu)$ is a
stationary ergodic process.

\subsubsection*{Assumption: Closability of the Dirichlet form}
It is assumed that $(P_t)$ is strongly continuous on
$L^2(\mu):=L^2(\XX,\BB,\mu)$. Let $\LL$ be its generator with
domain $\dd_2(\LL)$ on $L^2(\mu)$. It is also assumed that
$$
\EE(g,g):=\<-\LL g, g\>_{\mu}, \ g\in \dd_2(\LL)
$$
is closable in $L^2(\mu).$ Its closure which is denoted by $(\EE,
\dd(\EE))$ is a Dirichlet form: the symmetrized Dirichlet form
associated with the Markov process $(X_t)$ (or $(P_t)$). Notice
that $(\EE, \dd(\EE))$ corresponds to a self-adjoint generator
$\LL^{\sigma}$ (formally $\LL^\sigma=(\LL +\LL^*)/2$), and
$P_t^\sigma=e^{t\LL^\sigma}$ is the symmetrized Markov semigroup
of $(P_t)$. When $P_t$ is symmetric on $L^2(\mu)$, the above
closability assumption is always satisfied and the domain
$\dd(\EE)$ of the Dirichlet form coincides with the domain
$\dd_2(\sqrt{-\LL})$ in $L^2(\mu)$.

These above assumptions of ergodicity and closability of the
Dirichlet form prevail for the whole paper.

\subsubsection*{Fisher-Donsker-Varadhan information} The following
definition is motivated by standard large deviation results.

\begin{defn} Given the Dirichlet form $\EE$ with domain $\dd(\EE)$ on
$L^2(\mu)$, the Fisher-Donsker-Varadhan information of $\nu$ with
respect to $\mu$ is defined by
\begin{equation}
I(\nu|\mu):=\begin{cases}\EE(\sqrt{f}, \sqrt{f}), \ \ &\text{ if }\ \nu=f\mu, \sqrt{f}\in\dd(\EE)\\
+\infty, &\text{ otherwise.}
\end{cases}
\end{equation}
\end{defn}

\noindent\textbf{Convention.}\  We adopt the following convention
for the Fisher-Donsker-Varadhan information on a Riemannian
manifold $\XX$: if $\nu=f \mu$ with $f>0$ smooth,
$$
I(\nu|\mu)= \frac 14 \int_\XX \frac{|\nabla f|^2}{f} \, d\mu.
$$
This means that $I=I_F/4$ where $I_F$ is the standard Fisher
information. This will lead to transportation-information
inequalities with a natural interpretation in terms of large
deviations, see (\ref{eq-05}) below.
\medskip

When $(P_t)$ is $\mu$-symmetric, $\nu\mapsto I(\nu|\mu)$ is
exactly the Donsker-Varadhan entropy i.e.\! the rate function
governing the large deviation principle of the empirical measure
$$
L_t:=\frac 1t\int_0^t \delta_{X_s} ds
$$
for large time $t$. This was proved by Donsker and Varadhan
\cite{DV75a, DV76, DV83} under some conditions of absolute
continuity and regularity of $P_t(x,dy)$, and established in full
generality by L.~Wu \cite[Corollary B.11]{Wu00b}. When $\mu=
e^{-V(x)}dx/Z$ ($Z$ is the normalization constant) with $V\in C^1$
on a complete connected Riemannian manifold $\XX=M$, the diffusion
$(X_t)$ generated by $\LL =\Delta -\nabla V\cdot \nabla$
($\Delta,\nabla$ are respectively the Laplacian  and the gradient
on $M$) is $\mu$-reversible and the corresponding Dirichlet form
is given by
$$
\EE_\mu(g,g)=\int_M |\nabla g|^2\, d\mu, \  g\in
\dd(\EE_\mu)=H^1(\XX, \mu)
$$
where $H^1(\XX, \mu)$ is the closure of $C_b^\infty(M)$ (the space
of infinitely differentiable functions $f$ on $M$ with
$|\nabla^nf|$ bounded for all $n$) with respect to the norm
$$
\sqrt{\mu(|g|^2 + |\nabla g|^2)}.
$$
It also matches with the space of these $g\in L^2(M)$ such that
$\nabla g\in L^2(M\to TM; \mu)$ in distribution. In this case, if
$\nu=f\mu$ with $0<f\in C^1(M)$, then
\begin{equation}\label{eq-05}
    I(\nu|\mu)=\int_\XX |\nabla \sqrt{f}|^2 \,d\mu= \frac 14
\int_\XX\frac{|\nabla f|^2}{f}\, d\mu.
\end{equation}

\subsubsection*{Feynman-Kac semigroup}
The derivation of the large deviation results for $L_t$ as $t$
tends to infinity is intimately related to the Feynman-Kac
semigroup
\begin{equation}\label{Feynman-Kac} P_t^u g(x):=\ee^x g(X_t)
\exp\left(\int_0^tu(X_s)\,ds\right).
\end{equation}
When $u$ is bounded, $(P_t^u)$ is a strongly continuous semigroup
of bounded operators on $L^2(\mu)$ whose generator is given by
$\LL^u g=\LL g +u g$, for all $g\in \dd_2(\LL^u)=\dd_2(\LL)$.
\\
It is no surprise that this semigroup also plays a role in the
present investigation.

\subsection{Characterizations of $T_cI$}

Recall that Kantorovich's duality theorem (see \cite{Vill03})
states that for any $\nu,\mu\in M_1(\XX)$ so that
$T_c(\nu,\mu)<+\infty$,
\begin{equation}\label{dual}
T_c(\nu,\mu)= \sup_{(u,v)\in \Phi_c} \int u\, d\nu - \int v\,d\mu
\end{equation}
where
$$
\Phi_c:=\{(u,v)\in (b\BB)^2: u(x)-v(y)\le c(x,y), \ \forall
(x,y)\in \XX^2\}.
$$
 This motivates us to introduce as in \cite{GL}

\begin{equation}\label{T_Phi}
T_\Phi(\nu,\mu)=\sup_{(u,v)\in\Phi} \int u\, d\nu - \int v\,d\mu
\end{equation}
where $\Phi\subset (b\BB)^2$ (non-empty) satisfies

\begin{itemize}
    \item[(A1)] $u\le v$ for all $(u,v)\in \Phi$ (or
equivalently $T_\Phi(\nu, \nu)=0$ for all $\nu\in M_1(\XX)$);
    \item[(A2)] For all $\nu_1,\nu_2\in M_1(\XX)$, there exists $(u,v)\in
\Phi$ such that $\int u\, d\nu_1 - \int v\,d\nu_2\ge 0$ (or
equivalently $T_\Phi(\nu_1, \nu_2)\ge 0$ for all $\nu_1, \nu_2\in
M_1(\XX)$).
\end{itemize}

Note that for (A1) and (A2) to be satisfied when $\Phi=\Phi_c$, it
is enough that $c(x,x)=0$ for all $x.$ The main result of this
section is the following generalization of Theorem \ref{thm-GL}.

\begin{thm}\label{thm-a2.1} Let $((X_t)_{t\ge0}, \pp_\mu)$ be a
stationary ergodic Markov process with the symmetrized Dirichlet
form $(\EE, \dd(\EE))$,  $\Phi$ be as above  and $\alpha:
[0,\infty)\to [0,\infty]$ be a left continuous increasing function
such that $\alpha(0)=0.$ Consider the following properties:

\begin{enumerate}
\item[(a)] The following transportation inequality holds
\begin{equation}
    \alpha(T_\Phi(\nu,\mu))\le I(\nu|\mu),\ \  \forall \nu\in M_1(\XX)
    \tag{$T_\Phi I$}
\end{equation}

\item[(b)] For all $(u,v)\in \Phi$ and all $\lambda, t\ge 0$

\begin{equation}\label{L2bound} \|P_t^{\lambda u}\|_{L^2(\mu)} \le
e^{t[\lambda \mu(v)+ \alpha^\circledast(\lambda)]}
\end{equation}
where $P_t^{\lambda u}$ is the Feynman-Kac semigroup
(\ref{Feynman-Kac}) and $ \alpha^\circledast$ is defined at
(\ref{b1}).

\item[$(b')$] For all $(u,v)\in \Phi$ and all $\lambda\ge 0$
\begin{equation*}
    \limsup_{t\rightarrow\infty}\frac 1t
    \log\mathbb{E}_\mu\exp\left(\lambda\int_0^tu(X_s)\,ds\right)\leq
    \lambda \mu(v)+ \alpha^\circledast(\lambda)
\end{equation*}

\item[(c)] For any initial measure $\beta\ll\mu$ with
$d\beta/d\mu\in L^2(\mu)$ and for all $(u,v)\in \Phi$ and $r,
t>0$,

\begin{equation}\label{thm-a2.1a} \pp_\beta\left(\frac 1t\int_0^t u(X_s)\,ds \ge
\mu(v)+ r\right)\le \left\|\frac{d\beta}{d\mu}\right\|_2 e^{-
t\alpha(r)}
\end{equation}

\item[$(c')$] For all $(u,v)\in \Phi$ and for any $r\ge0$, there
exists $\beta\in M_1(E)$ such that $\beta\ll\mu,$ $d\beta/d\mu\in
L^2(\mu)$ and
$$
\limsup_{t\to\infty}\frac 1t \log \pp_{\beta}\left(\frac
1t\int_0^t u(X_s)\,ds \ge \mu(v)+ r\right)\le -\alpha (r)
$$
\end{enumerate}
We have
\begin{enumerate}
    \item $(a)\Rightarrow (b)\Rightarrow (b')$ and
$(a)\Rightarrow (c)\Rightarrow (c')$.
    \item If $\alpha$ is convex, then $(a)\Leftrightarrow (b)$.
\item If $(P_t)$ is symmetric on $L^2(\mu)$, then
$(a)\Leftrightarrow (c) \Leftrightarrow (c')$.\\ If furthermore
$\alpha$ is convex, (a), (b), $(b')$, (c) and $(c')$ are
equivalent.
\end{enumerate}
\end{thm}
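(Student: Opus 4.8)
The plan is to go, following \cite{GL}, through the variational identity tying the Fisher--Donsker--Varadhan information to the Feynman--Kac semigroup. For bounded measurable $w$ put $\Lambda(w):=\sup\{-\EE(g,g)+\mu(wg^2):g\in\dd(\EE),\ \mu(g^2)=1\}$; writing $g=\sqrt f$ and $\nu=f\mu$ one reads off
\[
\Lambda(w)=\sup_{\nu\in M_1(\XX)}\big(\nu(w)-I(\nu|\mu)\big),
\]
while, since the real part of the form $\langle\LL^w g,g\rangle_\mu$ sees only the symmetric part $\LL^\sigma$, $\Lambda(w)$ is equally the numerical abscissa of the generator $\LL^w=\LL+w$ of $(P_t^w)$ on $L^2(\mu)$. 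The Gronwall bound $\tfrac{d}{dt}\|P_t^wg\|_2^2\le 2\Lambda(w)\|P_t^wg\|_2^2$ on the core $\dd(\LL)$ then gives $\|P_t^w\|_{L^2(\mu)}\le e^{t\Lambda(w)}$ for all $t\ge0$, with equality when $(P_t)$ is $\mu$-symmetric (then $P_t^w$ is self-adjoint). I will use this throughout with $w=\lambda u$, $(u,v)\in\Phi$, $\lambda\ge0$.

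\emph{Part (1).} For $(a)\Rightarrow(b)$: the monotone conjugate of $(a)$ gives, for every $(u,v)\in\Phi$ and $\nu$, $\lambda(\nu(u)-\mu(v))-\alpha^\circledast(\lambda)\le\lambda T_\Phi(\nu,\mu)-\alpha^\circledast(\lambda)\le\alpha(T_\Phi(\nu,\mu))\le I(\nu|\mu)$; sup over $\nu$ yields $\Lambda(\lambda u)\le\lambda\mu(v)+\alpha^\circledast(\lambda)$, and $(b)$ follows from the semigroup bound. $(b)\Rightarrow(b')$ is one line of Cauchy--Schwarz: $\mathbb{E}_\mu\exp(\lambda\int_0^t u(X_s)\,ds)=\langle P_t^{\lambda u}\mathbf 1,\mathbf 1\rangle_\mu\le\|P_t^{\lambda u}\|_{L^2(\mu)}\le e^{t[\lambda\mu(v)+\alpha^\circledast(\lambda)]}$. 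For $(a)\Rightarrow(c)$, write $L_t(u):=\tfrac1t\int_0^t u(X_s)\,ds$; Markov's exponential inequality plus the same Cauchy--Schwarz under $\pp_\beta$ give, for every $\lambda\ge0$,
\[
\pp_\beta\big(L_t(u)\ge\mu(v)+r\big)\le\Big\|\tfrac{d\beta}{d\mu}\Big\|_2\,e^{-t[\lambda(\mu(v)+r)-\Lambda(\lambda u)]},
\]
so it remains to show that $E(r):=\sup_{\lambda\ge0}[\lambda(\mu(v)+r)-\Lambda(\lambda u)]\ge\alpha(r)$. By the variational identity, $E(r)=\sup_{\lambda\ge0}\inf_\nu[\lambda(\mu(v)+r-\nu(u))+I(\nu|\mu)]$, and I claim it equals $\psi(r):=\inf\{I(\nu|\mu):\nu(u)\ge\mu(v)+r\}$. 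The inequality $E(r)\le\psi(r)$ is immediate. For the reverse, note that $r\mapsto\psi(r)$ is convex, nondecreasing, and lower semicontinuous — the last point because $I(\cdot|\mu)$ is a good rate function on $(M_1(\XX),\tau)$ (\cite{Wu00b}) and $\nu\mapsto\nu(u)$ is $\tau$-continuous — and that $\mu(v)+r\ge\mu(u)$ (as $u\le v$); hence $\psi(r)$ is precisely the value at $\mu(v)+r$ of the Legendre transform, restricted to $\lambda\ge0$, of $\lambda\mapsto\Lambda(\lambda u)$, i.e.\ $\psi(r)=E(r)$. Finally $(a)$ forces $\psi(r)\ge\alpha(r)$: if $\nu(u)\ge\mu(v)+r$ then $T_\Phi(\nu,\mu)\ge r$, so $I(\nu|\mu)\ge\alpha(T_\Phi(\nu,\mu))\ge\alpha(r)$. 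This proves $(a)\Rightarrow(c)$; and $(c)\Rightarrow(c')$ by taking $\beta=\mu$.

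\emph{Parts (2) and (3).} For (2) only $(b)\Rightarrow(a)$ is needed: letting $t\downarrow0$ in $(b)$ and using that $\lim_{t\downarrow0}\tfrac1t\log\|P_t^{\lambda u}\|_{L^2(\mu)}$ is the numerical abscissa of $\LL^{\lambda u}$, namely $\Lambda(\lambda u)$, we recover $\Lambda(\lambda u)\le\lambda\mu(v)+\alpha^\circledast(\lambda)$, that is $\lambda(\nu(u)-\mu(v))-\alpha^\circledast(\lambda)\le I(\nu|\mu)$ for all $\nu,\lambda\ge0,(u,v)\in\Phi$; taking the supremum first over $(u,v)$ (which turns $\nu(u)-\mu(v)$ into $T_\Phi(\nu,\mu)\ge0$) and then over $\lambda\ge0$ gives $\alpha^{\circledast\circledast}(T_\Phi(\nu,\mu))\le I(\nu|\mu)$, which is $(a)$ once $\alpha$ (being convex) equals $\alpha^{\circledast\circledast}$. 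For (3), assume $(P_t)$ is $\mu$-symmetric; by Part (1) it suffices to prove $(c')\Rightarrow(a)$. Fix $\nu$ with $I(\nu|\mu)<\infty$ and $(u,v)\in\Phi$ (the case $\nu(u)\le\mu(v)$ being trivial), let $0\le r<\nu(u)-\mu(v)$, and let $\beta$ be as furnished by $(c')$ for this pair and level $r$. Since $L_t$ satisfies the full large deviation principle with good rate function $I(\cdot|\mu)$ (\cite{Wu00b}), the lower bound applied to the $\tau$-open set $G=\{\eta:\eta(u)>\mu(v)+r\}\ni\nu$, together with $(c')$, gives
\[
-I(\nu|\mu)\le\liminf_t\tfrac1t\log\pp_\beta(L_t\in G)\le\limsup_t\tfrac1t\log\pp_\beta(L_t(u)\ge\mu(v)+r)\le-\alpha(r),
\]
so $\alpha(r)\le I(\nu|\mu)$ for all such $r$; left continuity of $\alpha$ then yields $\alpha(\nu(u)-\mu(v))\le I(\nu|\mu)$, and a supremum over $(u,v)\in\Phi$ (again using that $\alpha$ is nondecreasing and left continuous) gives $(a)$. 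When moreover $\alpha$ is convex, a Chernoff bound with $\beta=\mu$ upgrades $(b')$ to $(c')$ (the Legendre transform being now exact, $\inf_{\lambda\ge0}[\alpha^\circledast(\lambda)-\lambda r]=-\alpha(r)$), so all five properties are equivalent.

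The subtle point — and the main obstacle — is obtaining the sharp exponent $e^{-t\alpha(r)}$ in $(c)$ rather than the a priori weaker $e^{-t\alpha^{\circledast\circledast}(r)}$: inserting $(a)$ into the Chernoff estimate before optimizing over $\lambda$ loses exactly the convexification. What rescues it is that the Chernoff exponent $E(r)$ is itself the optimal large deviation rate $\inf\{I(\nu|\mu):\nu(u)\ge\mu(v)+r\}$ of the $\tau$-closed half-space $\{\nu(u)\ge\mu(v)+r\}$ — equivalently, that this infimum is a lower semicontinuous convex function of $r$. Establishing this rests on the Donsker--Varadhan theory of the occupation measure of $(X_t)$, in particular on $I(\cdot|\mu)$ being a good rate function, and it is also why the symmetry of $(P_t)$ (which guarantees the full LDP with rate $I(\cdot|\mu)$) is needed for the converse in (3).
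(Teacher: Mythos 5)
Your overall architecture coincides with the paper's: compute $\Lambda(\lambda u)=\sup_\nu(\lambda\nu(u)-I(\nu|\mu))$ by the variational identity, get the Feynman--Kac $L^2$ bound $\|P_t^{\lambda u}\|\le e^{t\Lambda(\lambda u)}$, run Markov--Cauchy--Schwarz for the deviation estimate, and use Wu's LDP lower bound for $(c')\Rightarrow(a)$. The one place where you genuinely deviate is inside $(a)\Rightarrow(c)$, and that is also where there is a gap.

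You obtain the Chernoff exponent $E(r)=\sup_{\lambda\ge0}[\lambda(\mu(v)+r)-\Lambda(\lambda u)]$ and then claim $E(r)=\psi(r):=\inf\{I(\nu|\mu):\nu(u)\ge\mu(v)+r\}$, justified by asserting that $\psi$ is lower semicontinuous ``because $I(\cdot|\mu)$ is a good rate function on $(M_1(\XX),\tau)$.'' But part (1) of the theorem does not assume symmetry of $(P_t)$, and goodness of $I(\cdot|\mu)$ on the $\tau$-topology is not available at that level of generality; the reference \cite{Wu00b} identifies $I(\cdot|\mu)$ as the LDP rate function only in the symmetric case, and compactness of its sublevel sets is a separate matter in any case. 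Without lower semicontinuity, the Legendre biconjugate only recovers the \emph{closure} of $I_u$, so what you actually get from Chernoff is $E(r)=\lim_{\delta\downarrow0}I_u(\mu(v)+r-\delta)$ (a one-sided limit of a convex nondecreasing function), not necessarily $\psi(r)$. Fortunately this is enough: combined with the pointwise bound $I_u(\mu(v)+r-\delta)\ge\alpha(r-\delta)$ from $(a)$ and the \emph{left}-continuity of $\alpha$, one still gets $E(r)\ge\alpha(r)$. The paper sidesteps the whole issue by quoting L.~Wu's deviation inequality (Lemma~\ref{lem21}(2)), which already delivers the bound with the left-limit of $I_u$ built in, so that only left-continuity of $\alpha$ is needed and no lower semicontinuity of $I_u$ is ever invoked. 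I would recommend you either cite that lemma directly, as the paper does, or replace the lsc claim by the left-limit argument sketched above; as written, the lsc step is unjustified.

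Two smaller remarks. In Part (2), identifying $\lim_{t\downarrow0}\tfrac1t\log\|P_t^{\lambda u}\|_{L^2(\mu)}$ with the numerical abscissa $\Lambda(\lambda u)$ is correct and a slick alternative to the paper's argument, which instead differentiates $\langle P_t^{\lambda u}g,P_t^{\lambda u}g\rangle_\mu\le e^{2t(\lambda\mu(v)+\alpha^\circledast(\lambda))}\|g\|_2^2$ at $t=0$ on the core $\dd_2(\LL)$; both are fine. Also, in the variational identity for $\Lambda$, passing from a supremum over $g$ with $\mu(g^2)=1$ to one over nonnegative $g$ (i.e.\ over probability measures $\nu$) uses the Dirichlet-form contraction $\EE(|g|,|g|)\le\EE(g,g)$; the paper states this explicitly, you state the identity without comment — it is worth a word.
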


The proof of this result is similar to \cite[Theorems 2 and
15]{GL}'s ones. It takes advantage of large deviation results
previously obtained by L.~Wu. Namely,
\begin{enumerate}[-]
    \item the identification of the rate function in the symmetric case  and
the large deviation lower bound are taken from \cite{Wu00b} and
    \item the
non-asymptotic Cram\'er's upper bounds which are used in \cite{GL}
are replaced by the following result.
\end{enumerate}

\begin{lem}[L.~Wu \cite{Wu00c}]\label{lem21}
For any $u\in b\BB$ with $\mu(|u|)<+\infty$ and any $t>0$, the
following statements hold true.
\begin{enumerate}
    \item Denoting
    \begin{equation}\label{lem21a-2}
\Lambda(u):=\sup\left\{\int u g^2\, d\mu - \EE(g, g); g\in
\dd(\EE), \mu(g^2)=1, \mu(g^2|u|)<+\infty\right\},
 \end{equation}
 one has
\begin{equation}\label{lem21a}
\|P_t^u\|_{L^2(\mu)}\le e^{t\Lambda(u)}
\end{equation}
and  the equality holds in the symmetric case; \item For all
$r>0,$
 \begin{equation}\label{lem21b} \pp_\beta\left(\frac 1t\int_0^t
u(X_s)\,ds - \mu(u)\ge r\right)\le
\left\|\frac{d\beta}{d\mu}\right\|_2\exp\left(- t
\lim_{\delta\downarrow 0}I_u(\mu(u)+r-\delta)\right)
\end{equation}
where
$$
I_u(r):=\inf\left\{I(\nu|\mu);\nu\in M_1(\XX), \nu(u)=r\right\}, \
r\in \rr.
$$
\end{enumerate}
\end{lem}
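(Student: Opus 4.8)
The two assertions are linked: the operator-norm bound of part (1), fed into an exponential Chebyshev inequality, produces the concentration bound of part (2), so I prove (1) first. Since $u\in b\BB$, the Feynman--Kac semigroup $(P_t^u)$ is strongly continuous on $L^2(\mu)$ with generator $\LL^u g=\LL g+ug$ on $\dd_2(\LL^u)=\dd_2(\LL)$; the side condition $\mu(g^2|u|)<\infty$ in the definition of $\Lambda(u)$ is then automatic and $\Lambda(u)\le\|u\|_\infty<\infty$. For $g\in\dd_2(\LL)$, using $\EE(g,g)=\langle -\LL g,g\rangle_\mu$, one has
$$
\langle\LL^u g,g\rangle_\mu=\int_\XX ug^2\,d\mu-\EE(g,g)\le\Lambda(u)\,\mu(g^2),
$$
the inequality being the definition of $\Lambda(u)$ applied to $g/\|g\|_2$. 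Fixing $g_0\in\dd_2(\LL)$ and putting $g_t:=P_t^u g_0\in\dd_2(\LL)$, the map $t\mapsto\|g_t\|_2^2$ is differentiable with $\frac{d}{dt}\|g_t\|_2^2=2\langle\LL^u g_t,g_t\rangle_\mu\le 2\Lambda(u)\|g_t\|_2^2$, so Gronwall's inequality gives $\|P_t^u g_0\|_2\le e^{t\Lambda(u)}\|g_0\|_2$; density of $\dd_2(\LL)$ in $L^2(\mu)$ then yields $\|P_t^u\|_{L^2(\mu)}\le e^{t\Lambda(u)}$. When $(P_t)$ is $\mu$-symmetric, $\LL$ is self-adjoint and $\LL^u=\LL+M_u$ ($M_u$ the bounded multiplication by $u$) is self-adjoint and bounded above, so $P_t^u=e^{t\LL^u}$ and the spectral theorem gives $\|P_t^u\|_{L^2(\mu)}=e^{t\gamma}$, where $\gamma=\sup\{\langle\LL^u g,g\rangle_\mu:g\in\dd_2(\LL),\ \mu(g^2)=1\}$ is the top of the spectrum of $\LL^u$. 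Since $\dd_2(\LL)$ is, by construction of $\EE$, a core for the form, and $g\mapsto\int ug^2\,d\mu-\EE(g,g)$ is form-norm continuous on $\{\mu(g^2)=1\}$ ($u$ being bounded), one has $\gamma=\Lambda(u)$, giving the asserted equality.

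For part (2), set $a:=\mu(u)+r$. For every $\lambda\ge0$, the exponential Chebyshev inequality and the Feynman--Kac formula (\ref{Feynman-Kac}) with $g=\mathbf 1$ give
$$
\pp_\beta\!\Big(\frac1t\int_0^t u(X_s)\,ds\ge a\Big)\le e^{-\lambda ta}\,\ee_\beta\exp\!\Big(\lambda\int_0^t u(X_s)\,ds\Big)=e^{-\lambda ta}\,\Big\langle\frac{d\beta}{d\mu},\,P_t^{\lambda u}\mathbf 1\Big\rangle_\mu,
$$
and by Cauchy--Schwarz, $\|\mathbf 1\|_2=1$ and part (1), the right-hand side is at most $\|\frac{d\beta}{d\mu}\|_2\,e^{-\lambda ta+t\Lambda(\lambda u)}$. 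Optimising over $\lambda\ge0$,
$$
\pp_\beta\!\Big(\frac1t\int_0^t u(X_s)\,ds\ge a\Big)\le\Big\|\frac{d\beta}{d\mu}\Big\|_2\exp\!\Big(-t\sup_{\lambda\ge0}\big[\lambda a-\Lambda(\lambda u)\big]\Big),
$$
and it remains to identify the exponent. Since $I(\nu|\mu)=\EE(\sqrt f,\sqrt f)$ when $\nu=f\mu$ with $\sqrt f\in\dd(\EE)$ (and $+\infty$ otherwise), and since $|g|\in\dd(\EE)$ with $\EE(|g|,|g|)\le\EE(g,g)$, the supremum defining $\Lambda$ may be restricted to $g\ge0$; writing $\nu=g^2\mu$ this identifies $\Lambda(\lambda u)=\sup_{\nu\in M_1(\XX)}[\lambda\nu(u)-I(\nu|\mu)]=\sup_{s\in\rr}[\lambda s-I_u(s)]=(I_u)^*(\lambda)$, the convex conjugate of $I_u$. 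Since $I(\cdot|\mu)$ is convex (classical), so is $I_u$; moreover $I_u\ge0$ and $I_u(\mu(u))=0$, hence $I_u$ is nondecreasing on $[\mu(u),\infty)$ and $(I_u)^*(\lambda)\ge\lambda\mu(u)$. For $\lambda<0$ and $a>\mu(u)$ this forces $\lambda a-(I_u)^*(\lambda)\le\lambda(a-\mu(u))<0=-(I_u)^*(0)$, so $\sup_{\lambda\ge0}[\lambda a-\Lambda(\lambda u)]=\sup_{\lambda\in\rr}[\lambda a-(I_u)^*(\lambda)]=(I_u)^{**}(a)$, the biconjugate. Finally $(I_u)^{**}$ is the lower semicontinuous convex envelope of $I_u$; since $I_u$ is convex and nondecreasing beyond $\mu(u)$, at $a=\mu(u)+r>\mu(u)$ this envelope is exactly the left limit, $(I_u)^{**}(a)=\lim_{\delta\downarrow0}I_u(a-\delta)$, which is the claimed exponent.

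\textbf{Main obstacle.} The analytic estimates (Gronwall, Chebyshev, Cauchy--Schwarz) are routine; the care lies in two places. First, in the symmetric case one must match the form domain $\dd(\EE)$ occurring in the definition of $\Lambda(u)$ with the generator domain appearing in the variational formula for the top of the spectrum of $\LL^u$, which is done via the form-core property. Second, the Legendre-duality identification of $\sup_{\lambda\ge0}[\lambda a-\Lambda(\lambda u)]$ with $\lim_{\delta\downarrow0}I_u(a-\delta)$ requires first rewriting $\Lambda(\lambda u)$ as the convex conjugate of $I_u$ and then using convexity of the Fisher--Donsker--Varadhan information to pass to its left-continuous (biconjugate) envelope.
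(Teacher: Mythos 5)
Your proof is correct, but there is nothing in the paper to compare it against: Lemma \ref{lem21} is imported without proof from L.~Wu's paper \cite{Wu00c}, and your argument is essentially a faithful reconstruction of the one given there. The three ingredients — the Lumer--Phillips-type bound obtained by differentiating $t\mapsto\|P_t^u g\|_2^2$ and applying Gronwall, the spectral theorem for the self-adjoint bounded perturbation $\LL+u$ in the symmetric case, and Chebyshev plus Legendre duality with the identification $\Lambda(\lambda u)=(I_u)^*(\lambda)$ and $(I_u)^{**}(\mu(u)+r)=\lim_{\delta\downarrow0}I_u(\mu(u)+r-\delta)$ — are exactly the standard route, and each step checks out (in particular your reduction of the supremum from $\lambda\ge0$ to $\lambda\in\rr$, and the left-limit identification of the biconjugate of the convex, nondecreasing $I_u$ beyond $\mu(u)$, are handled correctly). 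One remark: the hypothesis $\mu(|u|)<+\infty$ is redundant for $u\in b\BB$, which signals that the lemma is really intended for unbounded $u$ as well; indeed the paper later invokes the equality case of (\ref{lem21a}) for arbitrary $u\in L^1(\mu)$ in the proof of Corollary \ref{cor21}, Part (2). Extending your argument to that setting requires a truncation and monotone-convergence step, since multiplication by $u$ is then no longer a bounded perturbation of $\LL$; for the statement as written, however, your proof is complete.
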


It is proved in \cite{Wu00c, Wu00b} that in the symmetric case,
$I_u(r)$ is exactly the rate function governing the large
deviation principle of $\frac 1t\int_0^t u(X_s)\,ds$ for bounded
$u$. In these papers no mixing assumptions are required, this is
in contrast with the usual assumptions for the large deviation
principle as discovered by Donsker and Varadhan
\cite{DV75a,DV76,DV83} and reconsidered by Deuschel and Stroock
\cite{DS}. This relaxation of the usual assumptions is allowed by
the assumed restriction that the initial law is absolutely
continuous with respect to the ergodic measure $\mu.$

\bprf[Proof of Theorem \ref{thm-a2.1}] {\bf Part (1).} As $\nu\to
I(\nu|\mu)$ is convex on $M_1(\XX)$, so is $I_u:\rr\to
[0,+\infty]$. Since $I_u(\mu(u))=0$, $I_u$ is increasing on
$[\mu(u), +\infty)$. For all $(u,v)\in \Phi$ and all
$\lambda\ge0,$ we have
\begin{equation}\label{b2}
    \Lambda(\lambda u)=I_u^*(\lambda)
\end{equation}
where $I_u^*$ is the convex conjugate of $I_u.$ Indeed for
$\lambda\ge0$, by (\ref{lem21a-2})
$$
\aligned \Lambda(\lambda u)
&=\sup\{\lambda \int u g^2 \,d\mu- \EE(g, g); g\in \dd(\EE), \mu(g^2)=1\}\\
&= \sup\{\lambda \int u g^2 \,d\mu - \EE(g, g); 0\le g\in \dd(\EE), \mu(g^2)=1\}\\
&= \sup\{\lambda \int u \,d\nu - I(\nu|\mu); \nu\in M_1(\XX)\}\\
&=\sup_{a\in \rr} \{\lambda a - I_u(a)\}
\endaligned
$$
where the second equality follows from the fact that $\EE(|g|,
|g|)\le \EE(g,g)$ for all $g\in \dd(\EE).$

Note also  that $T_\Phi I$ implies that for any $(u,v)\in \Phi,$
\begin{equation}\label{b3}
I_u(\mu(v)+r)\ge \widetilde\alpha(r), \ \forall r\in\rr
\end{equation}
where $\widetilde \alpha(r)=\alpha(r)$ for $r\ge 0$ and $=0$ for
$r\le 0$. Indeed it is trivial for $r\le 0$ and for any $r\ge0$
and $\nu\in M_1(\XX)$ such that $\nu(u)=\mu(v)+r$,  $T_\Phi I$
implies that
$$
I(\nu|\mu)\ge \alpha(T_\Phi(\nu,\mu))\ge
\alpha(\nu(u)-\mu(v))=\alpha(r).
$$

\noindent$\bullet$\ $(a)\Rightarrow (b)$: Putting together
(\ref{b2}) and (\ref{b3}) leads us to
$$
\Lambda(\lambda u)= \sup_{a\in\rr} \left[\lambda a -
I_u(a)\right]\le \sup_{r\in\rr}\left[\lambda (\mu(v)+r) -
\widetilde\alpha(r)\right]\} =\lambda \mu(v) +
\alpha^\circledast(\lambda)
$$
for all $\lambda\ge0.$ Statement (b) now follows from inequality
(\ref{lem21a}).

\vskip10pt \noindent$\bullet$\ $(a)\Rightarrow (c)$: This follows
from (\ref{lem21b}) and (\ref{b3}), noting that by (A1),
$\mu(u)\le\mu(v)$ for all $(u,v)\in\Phi.$

\vskip10pt \noindent$\bullet$\ $(b)\Rightarrow (b')$ and
$(c)\Rightarrow (c')$: These implications are trivial.

\vskip20pt \noindent{\bf Part (2).} $(b)\Rightarrow (a)$ in the
case where $\alpha$ is convex. By (\ref{L2bound}), we have for
$(u,v)\in\Phi$ fixed and for any $g\in \dd_2(\LL)$,
$$
\<P_t^{\lambda u}g, P_t^{\lambda u}g\>_\mu\le e^{2t(\lambda \mu(v)
+ \alpha^\circledast(\lambda))} \<g,g\>_\mu.
$$
Differentiating at time zero we obtain
$$ 2\<g, \LL g + \lambda ug\>_\mu=
2(\lambda \mu(g^2u)-\EE(g,g))\le 2 (\lambda \mu(v) +
\alpha^\circledast(\lambda))\mu(g^2).
$$
Then for all $g\in\dd_2(\LL)$,
$$
\lambda[\mu(g^2 u) - \mu(v)\mu(g^2)]- \alpha^\circledast(\lambda)
\mu(g^2) \le  \EE(g,g).
$$
It can be extended to $g\in\DD(\EE)$. Now for any $\nu\in
M_1(\XX)$ such that $I(\nu|\mu)<+\infty$, applying the above
inequality to $g=\sqrt{\frac{d\nu}{d\mu}}$, we get
$$
\lambda[\nu(u)-\mu(v)]-\alpha^\circledast(\lambda)\le I(\nu|\mu).
$$
Taking the supremum over all $\lambda\in\mathbb{R}$, as $\alpha$
assumed to be convex and $\alpha^\circledast=\tilde{\alpha}^*$ on
$[0,\infty)$ (see the remark below (\ref{b1})), we get
$$
\tilde{\alpha}(\nu(u)-\mu(v)) \le I(\nu|\mu)
$$
and taking the supremum over all $(u,v)\in\Phi$ leads to the
desired result.

\vskip20pt \noindent{\bf Part (3).}  Let us assume from now on
that the semigroup $(P_t)$ is symmetric in $L^2(\mu).$

\par \noindent$\bullet$ $(c')\Rightarrow
(a):$ By the large deviation lower bound in \cite[Theorem
B.1]{Wu00b} and the identification of the rate function in the
symmetric case in \cite[Corollary B.11]{Wu00b}, we have for any
initial probability measure $\beta\ll\mu$,
$$
\liminf_{t\to\infty}\frac 1t \log \pp_{\beta}\left(\frac
1t\int_0^t u(X_s)\,ds \ge \mu(v)+ r\right)\ge -\inf\{I(\nu|\mu);\
\nu(u)>\mu(v)+r\}.
$$
This together with $(c')$ implies that for any $r\ge0$,
$$
\inf\{I(\nu|\mu);\ \nu(u)>\mu(v)+r\}\ge \alpha(r).
$$
Fix now $\nu$ such that $r_0=T_\Phi(\nu,\mu)>0$ (otherwise $T_\Phi
I$ is obviously true.) Choosing a sequence $(u_n,v_n)\in \Phi$ so
that $\nu(u_n)-\mu(v_n)> r_0-1/n,$ for all large enough  $n,$
$$
\alpha(r_0 -1/n) \le I(\nu|\mu)
$$
where $T_\Phi I$ follows by letting $n\to\infty$ and by the
left-continuity of $\alpha$.

\vskip10pt
 \noindent$\bullet$ $\alpha$ is convex and $(P_t)$ is
symmetric. $(b')\Rightarrow (c')$ with $\beta=\mu$: The proof is
standard and consists in optimizing exponential upper bounds. So
doing, one obtains by means of $(b')$ the asymptotic upper bound
$(c')$ with the convex envelope of $\tilde{\alpha}$ instead of
$\tilde{\alpha}.$ As $\alpha$ is assumed to be convex, $(c')$ is
proved.
\\
This completes the proof of  the theorem. \nprf

We now investigate two particular cases of Theorem \ref{thm-a2.1}.

\bcor[The inequalities $W_1I(c)$ and $W_2I(c)$]\label{cor21} Let
$c>0$ and let $(X_t)$ be a $\mu$-reversible and ergodic Markov
process such that $\int d^2(x,x_0)\,d\mu(x)<+\infty.$

\begin{enumerate}[(1)]
\item  The statements below are equivalent:

\begin{enumerate}

    \item The following $W_1I(c)$ inequality  holds true:
\begin{equation}
     W_1^2(\nu, \mu) \le 4c^2\, I(\nu|\mu),\ \forall \nu\in
    M_1(\XX);
    \tag{$W_1I(c)$}
\end{equation}

    \item For all Lipschitz function $u$ on $\XX$ with
$\|u\|_{\mathrm{Lip}}\le 1$ and all $\lambda, t\ge0,$
$$\|P_t^{\lambda u}\|_{L^2(\mu)} \le \exp\left(
\lambda \mu(u)+ c^2\lambda^2\right);$$

    \item For all Lipschitz function $u$ on $\XX$ with
$\|u\|_{\mathrm{Lip}}\le 1, \ \mu(u)=0$ and all $\lambda\ge 0$,
$$
\limsup_{t\to +\infty} \frac 1t \log \ee_\mu \exp\left(\lambda
\int_0^t u(X_s)\,ds\right)\le c^2\lambda^2;
$$

    \item For all Lipschitz function $u$ on $\XX,$  $r> 0$ and $\beta\in
    M_1(\XX)$such that $d\beta/d\mu\in L^2(\mu),$
$$
\pp_\beta\left(\frac 1t\int_0^t u(X_s)\,ds\ge\mu(u)+r\right)\le
\left\|\frac{d\beta}{d\mu}\right\|_2 \exp\left(-\frac{
r^2}{4c^2\|u\|_{\mathrm{Lip}}^2}\right).
$$
 \end{enumerate}

 \item The statements below are equivalent:

\begin{enumerate}[(a)]

\item The following $W_2I(c)$ inequality holds true:
\begin{equation}
     W_2^2(\nu, \mu) \le 4c^2 I(\nu|\mu),\ \forall \nu\in
    M_1(\XX);
    \tag{$W_2I(c)$}
\end{equation}
\item For any $v\in b\BB$,

$$\|P_t^{\frac{1} {4c^2} Qv}\|_{L^2(\mu)} \le e^{\frac{t} {4c^2} \mu(v)}, \ \forall t\ge0$$
where  $\displaystyle{ Qv(x)=\inf_{y\in \XX} \{v(y) + d^2(x,y)\}
}$ is the so-called ``inf-convolution'' of $v$;

 \item For any $u\in b\BB$,
$$\|P_t^{\frac{1} {4c^2} u}\|_{L^2(\mu)} \le e^{\frac{t} {4c^2} \mu(Su)},\ \forall t\ge0$$
where  $\displaystyle{ Su(y)=\sup_{x\in \XX} \{u(y) - d^2(x,y)\}
}$ is the so-called ``sup-convolution'' of $u$.
 \end{enumerate}
 \end{enumerate}

\ncor

\noindent{\bf Notation.}\ The best constants $c>0$ in $W_1I(c)$
and $W_2I(c)$ will be denoted respectively by $\WI(\mu)$ and
$\WWI(\mu)$.

\brmk{\rm\
\begin{enumerate}[(i)]
    \item The  best constants $\WI(\mu)$ and
$\WWI(\mu)$ depend on the metric $d$ and the Dirichlet form $\EE$.
Of course $\WI(\mu)\le \WWI(\mu)$.
    \item The  above corollary may be seen as the counterpart of
Bobkov-G\"otze's characterizations of $W_pH$ ($p=1,2$) for Markov
processes.
    \item For a justification of the choice of the constant $4c^2$ in
$WI(c),$ see Example \ref{ex-01} below, where $\WI$ and $\WWI$ are
identified as standard error in a Gaussian model.
\end{enumerate}
}\nrmk

\bprf {\bf Part (1).}  Notice that
$W_1(\nu,\mu)=T_{\Phi}(\nu,\mu)$, where $\Phi:=\{(u,u);
\|u\|_{\mathrm{Lip}}\le 1, u\in b\BB\}$. The result is a direct
consequence of Theorem \ref{thm-a2.1} in case $u$ is bounded. The
transition from a bounded to an unbounded $u$ follows from an
elementary monotone convergence argument.

\noindent{\bf Part (2).} The equivalence of (b) and (c) is direct.
Though $(a)\Leftrightarrow (b)$ follows easily from Theorem
\ref{thm-a2.1}, we nevertheless present  a simple proof. In the
present symmetric case, by Lemma \ref{lem21}, for any $u\in
L^1(\mu)$ the equality is achieved in inequality (\ref{lem21a}):
$$
\|P_t^u\|_{L^2(\mu)} = e^{t\Lambda(u)}, \ \forall t\ge0
$$
(possibly infinite) where $\Lambda(u)$ is given at
(\ref{lem21a-2}). Notice that $Qv$ is upper semicontinuous and
$Su$ is lower semicontinuous.

\noindent$\bullet$\ $(a)\Rightarrow(b)$. Since for any $\nu\in
M_1(E)$,
$$
\int Qv\, d\nu - \int v\,d\mu \le W_2^2(\nu,\mu) \le
4c^2\,I(\nu|\mu)
$$
then
$$ \frac{1} {4c^2} \int Qv \,d\nu - I(\nu|\mu) \le \frac{1} {4c^2} \mu(v)
$$
Taking the supremum over all $\nu$ yields (b).

\noindent$\bullet$\ $(b)\Rightarrow(a)$. Reverse the above proof.
 \nprf

\brmk \rm
 We have seen that, by Theorem \ref{thm-a2.1}, $T_\Phi I$ inequalities lead to
 exponential deviation inequalities when the initial measure $\beta$ is such that
 $d\beta/d\mu \in L^2(\mu)$. It is of course a limitation for the applications.
 Let us see that in the diffusion case we may overcome this limitation.  As
 remarked by Wu \cite[p.441-442]{Wu00c} , this assumption can be replaced
 by $d\beta/d\mu \in L^q(\mu)$ for $1\leq q < 2$, provided that one
 replaces
 (\ref{lem21a})
 in Lemma \ref{lem21} by $\|P_t^u\|_p\le e^{t\Lambda_p(u)}$, with
$$\Lambda_p(u) := \sup \, \left\{ \int \, u|f|^p \, d\mu \, + \, \langle\textrm{sgn}(f)|f|^{p-1},\LL f\rangle_\mu \, ; \, f\in \dd_p(\LL) \,
\textrm{ and } \, \int |f|^p d\mu =1 \right\} \, ,$$
 where $p$ and $q$ are conjugate numbers. Now, suppose that
$\LL$ admits a carr\'e du champ $\Gamma.$ One can integrate by
parts and get
$$\langle\textrm{sgn}(f)|f|^{p-1},\LL f\rangle_\mu \, =
\, - \, (4(p-1)/p^2) \, \int \Gamma(|f|^{p/2}) \, d\mu \, .$$
Taking $g=|f|^{p/2}$ in the definition of $\Lambda,$ we obtain
that
$$\Lambda_p(u) \, = \, (4(p-1)/p^2) \, \Lambda( (p^2/4(p-1)) u).$$ Once again a deviation inequality is obtained, however with worse constants.
\nrmk


\subsection{Tensorization of $T_cI$}

Assume that $\mu_i\in M_1(\XX_i)$ satisfies

\begin{equation}\label{a21} \alpha_i(T_{c_i}(\nu, \mu_i))\le I_i(\nu|\mu_i),\
\forall \nu\in M_1(\XX_i) \end{equation} where $I_i(\nu|\mu_i)$ is
the Fisher-Donsker-Varadhan information related to the Dirichlet
form $(\EE_i, \dd(\EE_i))$, and $\alpha_i$ is moreover convex. On
the product space $\XX^{(n)}:=\prod_{i=1}^n \XX_i$ equipped with
the product measure $\mu:=\prod_{i=1}^n \mu_i$, consider the
sum-cost function

\begin{equation}\label{a22}
\oplus_ic_i (x,y) :=\sum_{i=1}^n c(x_i, y_i), \ \forall x,y\in
\XX^{(n)}
\end{equation}
and the {\it inf-convolution} of $(\alpha_i)$
\begin{equation}\label{a23}
\alpha_1\Box\cdots\Box
\alpha_n(r):=\inf\left\{\sum_{i=1}^n\alpha(r_i);\ r_i\ge 0,
\sum_{i=1}^nr_i=r\right\}.
\end{equation}
It also shares the following properties of every $\alpha_i:$ it is
increasing, left continuous and convex on $\rr^+$ with
$\alpha(0)=0$ (see \cite{GL}). Define the sum-Dirichlet form of
$\oplus_i\EE_i$ by
\begin{equation}\label{a24}
\aligned \dd(\oplus_i\EE_i)&:=\left\{g\in L^2(\mu): g_i\in
\dd(\EE_i), \textrm{for }
\mu\textrm{-a.e. } \hat x_i \textrm{ and } \int_{\XX^{(n)}} \sum_{i=1}^n\EE_i(g_i,g_i)\,d\mu<+\infty\right\}\\
\oplus_i\EE_i(g, g)&:= \int_{\XX^{(n)}} \sum_{i=1}^n
\EE_i(g_i,g_i)\,d\mu, \quad  g\in \dd(\EE)
\endaligned
\end{equation}
where $g_i(x_i):=g(x_1,\cdots, x_i, \cdots, x_n)$ with $\hat
x_i:=(x_1,\cdots, x_{i-1}, x_{i+1}, \cdots, x_n)$ fixed.

\begin{thm}\label{thm22}
Assume (\ref{a21}) for each $i=1,\cdots, n$ with $\alpha_i$
moreover convex.  Define $c, \alpha, \EE$ respectively by
(\ref{a22}), (\ref{a23}) and (\ref{a24}). Let
$I_{\oplus_i\EE_i}(\nu|\mu)$ be the Fisher-Donsker-Varadhan
information associated with $(\oplus_i\EE_i, \dd(\oplus_i\EE_i))$.
Then
\begin{equation}
\alpha_1\Box\cdots\Box \alpha_n(r) (T_{\oplus c_i}(\nu, \mu))\le
I_{\oplus_i\EE_i}(\nu|\mu),\ \forall \nu\in
M_1\left(\XX^{(n)}\right).
\end{equation}
\end{thm}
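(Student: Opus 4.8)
The plan is to prove the statement first for $n=2$ and then obtain the general case by a short induction; the $n=2$ step combines a gluing inequality for the transportation cost with a subadditivity inequality for the Fisher--Donsker--Varadhan information, the two being linked by Jensen's inequality applied to the convex functions $\alpha_i$. This is the exact analogue of the tensorization of $T_cH$ in \cite{GL}, with the relative entropy replaced by $I(\cdot|\mu)$, which (as recalled in the proof of Theorem \ref{thm-a2.1}) is also convex in its first argument.

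For $n=2$, write $\XX^{(2)}=\XX_1\times\XX_2$, $\mu=\mu_1\otimes\mu_2$ and let $\nu=f\mu\in M_1(\XX^{(2)})$ with $I_{\oplus_i\EE_i}(\nu|\mu)<\infty$ (otherwise there is nothing to prove). For $i=1,2$ let $\nu_i$ be the $i$-th marginal of $\nu$ and, since the $\XX_i$ are Polish, let $\nu^{x_1}$ (resp.\ $\nu^{x_2}$) be the disintegration of $\nu$ along the first (resp.\ second) coordinate, a probability measure on $\XX_2$ (resp.\ $\XX_1$). Two facts are needed. First, a gluing bound:
$$
T_{c_1\oplus c_2}(\nu,\mu)\ \le\ T_{c_1}(\nu_1,\mu_1)+\int_{\XX_1}T_{c_2}(\nu^{x_1},\mu_2)\,\nu_1(dx_1),
$$
proved by assembling a coupling of $\nu$ and $\mu$ out of an optimal coupling of $(\nu_1,\mu_1)$ together with, conditionally on the first $\XX_1$-coordinate being $x_1$, an independent optimal coupling of $(\nu^{x_1},\mu_2)$; since under $\mu$ the second coordinate is $\mu_2$ given any value of the first, and the auxiliary randomizations are independent, the second component of this coupling is distributed as $\mu_1\otimes\mu_2$. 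Second, a subadditivity bound for the information:
$$
I_{\oplus_i\EE_i}(\nu|\mu)\ \ge\ I_1(\nu_1|\mu_1)+\int_{\XX_1}I_2(\nu^{x_1}|\mu_2)\,\nu_1(dx_1).
$$
Indeed, with $g=\sqrt f$, (\ref{a24}) gives $I_{\oplus_i\EE_i}(\nu|\mu)=\int\EE_1(g_1,g_1)\,d\mu+\int\EE_2(g_2,g_2)\,d\mu$. For fixed $x_1$ one has $g_2=\sqrt{f_1(x_1)}\,\sqrt{d\nu^{x_1}/d\mu_2}$ as a function of $x_2$, so since $\EE_2$ is a nonnegative quadratic form $\EE_2(g_2,g_2)=f_1(x_1)\,I_2(\nu^{x_1}|\mu_2)$, and the second term equals $\int_{\XX_1}I_2(\nu^{x_1}|\mu_2)\,\nu_1(dx_1)$ after integration in $x_1$. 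Symmetrically, the first term equals $\int_{\XX_2}I_1(\nu^{x_2}|\mu_1)\,\nu_2(dx_2)$; since $\nu_1=\int_{\XX_2}\nu^{x_2}\,\nu_2(dx_2)$ and $I_1(\cdot|\mu_1)$ is convex, this is $\ge I_1(\nu_1|\mu_1)$.

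Now put $r_1=T_{c_1}(\nu_1,\mu_1)$ and $r_2=\int_{\XX_1}T_{c_2}(\nu^{x_1},\mu_2)\,\nu_1(dx_1)$. By the gluing bound and the monotonicity of $\alpha_1\Box\alpha_2$,
$$
\alpha_1\Box\alpha_2\big(T_{c_1\oplus c_2}(\nu,\mu)\big)\ \le\ \alpha_1\Box\alpha_2(r_1+r_2)\ \le\ \alpha_1(r_1)+\alpha_2(r_2).
$$
By hypothesis (\ref{a21}) for $i=1$, $\alpha_1(r_1)\le I_1(\nu_1|\mu_1)$; by Jensen's inequality (this is where the convexity of $\alpha_2$ is used) followed by (\ref{a21}) for $i=2$,
$$
\alpha_2(r_2)\ \le\ \int_{\XX_1}\alpha_2\big(T_{c_2}(\nu^{x_1},\mu_2)\big)\,\nu_1(dx_1)\ \le\ \int_{\XX_1}I_2(\nu^{x_1}|\mu_2)\,\nu_1(dx_1).
$$
Adding these and invoking the information subadditivity bound gives $\alpha_1\Box\alpha_2\big(T_{c_1\oplus c_2}(\nu,\mu)\big)\le I_{\oplus_i\EE_i}(\nu|\mu)$, which is the $n=2$ case. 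For general $n$, write $\XX^{(n)}=\XX^{(n-1)}\times\XX_n$; by the induction hypothesis $\mu^{(n-1)}=\prod_{i<n}\mu_i$ satisfies the corresponding inequality with rate $\alpha_1\Box\cdots\Box\alpha_{n-1}$ --- again increasing, left continuous and convex by \cite{GL} --- and with Dirichlet form $\oplus_{i<n}\EE_i$. Since the sum-Dirichlet-form construction (\ref{a24}) and the sum-cost (\ref{a22}) are associative, applying the $n=2$ case to the two factors $\XX^{(n-1)}$ and $\XX_n$ yields the claim with rate $(\alpha_1\Box\cdots\Box\alpha_{n-1})\Box\alpha_n=\alpha_1\Box\cdots\Box\alpha_n$.

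The main obstacle is the information subadditivity inequality: unlike the relative entropy, which tensorizes through an exact chain rule, the Fisher--Donsker--Varadhan information only satisfies the displayed inequality, obtained from an exact identity for one tensor factor and a convexity estimate for the other. The remaining technical point is to check that all the disintegrations are legitimate and that the square roots of the relevant densities belong to the domains of the Dirichlet forms, so that the quadratic-form manipulations above are valid; this is routine given that the $\XX_i$ are Polish and that $I_{\oplus_i\EE_i}(\nu|\mu)<\infty$.
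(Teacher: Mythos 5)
Your proof is correct, but it takes a genuinely different route from the paper, and in fact it is precisely the route the paper announces it is \emph{not} taking. Just before Lemma~\ref{lem23} the paper remarks that its argument ``is different from Marton's original proof \cite{Mar96} where an ordering of sites is required.'' Your $n=2$ gluing bound
$$
T_{c_1\oplus c_2}(\nu,\mu)\ \le\ T_{c_1}(\nu_1,\mu_1)+\int_{\XX_1}T_{c_2}(\nu^{x_1},\mu_2)\,\nu_1(dx_1)
$$
is exactly the Marton-style inequality: you first couple the first marginal $\nu_1$ with $\mu_1$ optimally and then, given $x_1$, couple the conditional $\nu^{x_1}$ with $\mu_2$; this works because $\mu$ is a product, but it privileges the site $1$. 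You then pay for this asymmetry on the information side: your estimate $I_{\oplus_i\EE_i}(\nu|\mu)\ge I_1(\nu_1|\mu_1)+\int I_2(\nu^{x_1}|\mu_2)\,d\nu_1$ is only an inequality, obtained from the exact identity $\int\EE_1(g_1,g_1)\,d\mu=\int I_1(\nu^{x_2}|\mu_1)\,d\nu_2$ by an extra Jensen step that sacrifices information. Finally you run an induction on $n$, for which associativity of the sum-cost, of the inf-convolution, and of the sum-Dirichlet-form construction (\ref{a24}) must be invoked.

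The paper instead uses the symmetric disintegration in both lemmas: $\nu_i$ is the conditional law of $x_i$ given \emph{all} other coordinates $\hat x_i$. Lemma~\ref{lem23} produces the transport bound $T_{\oplus c_i}(\mu,\nu)\le\ee^\nu\sum_i T_{c_i}(\mu_i,\nu_i)$ by a coordinate-by-coordinate resampling argument that does not order the sites, and Lemma~\ref{lem26} gives \emph{exact} additivity $I_{\oplus_i\EE_i}(\nu|\mu)=\ee^\nu\sum_i I_i(\nu_i|\mu_i)$ with no Jensen loss. The two lemmas then mesh perfectly, and the proof handles general $n$ in a single step rather than by induction. What the paper's route buys is conceptual symmetry and the avoidance of an arbitrary ordering; it is also the form that generalizes to the dependent Gibbs-measure setting cited at the end of the subsection. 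Your route shows the theorem does not actually require the more novel Lemma~\ref{lem23}, but it reintroduces the ordering the authors wished to avoid, and the extra Jensen step discards part of the information identity. The associativity claims you sweep under the rug (in particular, equality of domains for the iterated binary sum-Dirichlet form and the direct $n$-fold construction) are the only technical points you should spell out more carefully, but they are indeed standard.
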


This result is similar to \cite[Corollary 5]{GL}, but the proof
will be different. It is based on the following sub-additivity
result for the transportation cost of a product measure which is
different from Marton's original proof \cite{Mar96} where an
ordering of sites is required.

\begin{lem}\label{lem23} Given a probability measure $\nu$ on
$\prod_{i=1}^n \XX_i$, let $\nu_i$ be the regular conditional
distribution of $x_i$ knowing $\hat x_i$. Then with the cost
function $c$  given at (\ref{a22}),
$$
T_{\oplus c_i}(\mu,\nu) \le \ee^\nu \sum_{i=1}^n T_{c_i}(\mu_i,
\nu_i).
$$
\end{lem}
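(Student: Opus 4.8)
The plan is to prove Lemma~\ref{lem23} by constructing a coupling of $\mu$ and $\nu$ out of the optimal couplings for each conditional problem $T_{c_i}(\mu_i,\nu_i)$, and then to estimate the total cost of this coupling. Since $T_{\oplus c_i}(\mu,\nu)$ is an infimum over all couplings of $\mu$ and $\nu$, it suffices to exhibit one coupling $\pi$ with $\iint \oplus_i c_i\, d\pi \le \ee^\nu \sum_i T_{c_i}(\mu_i,\nu_i)$.

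First I would set up the disintegration machinery carefully. Write a point of $\XX^{(n)}$ as $x=(x_1,\dots,x_n)$ and $\hat x_i=(x_1,\dots,x_{i-1},x_{i+1},\dots,x_n)$. For $\nu$, let $\nu_i(\cdot\mid \hat x_i)$ denote the regular conditional law of the $i$-th coordinate given $\hat x_i$ (under $\nu$); for the product measure $\mu$, the conditional law of the $i$-th coordinate is simply $\mu_i$, independent of the other coordinates. The key structural observation is that one can disintegrate $\nu$ sequentially: $\nu(dx) = \nu^1(dx_1)\,\nu^2(dx_2\mid x_1)\cdots \nu^n(dx_n\mid x_1,\dots,x_{n-1})$, and similarly (trivially) $\mu(dy)=\mu_1(dy_1)\cdots\mu_n(dy_n)$. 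I would then build the coupling $\pi$ of $\mu$ and $\nu$ coordinate by coordinate: having coupled the first $i-1$ coordinates, and being given $\hat x_i$ determined by $\nu$'s past, choose for the $i$-th coordinate the optimal coupling $\pi_i^{\,\hat x_i}$ between $\mu_i$ and $\nu_i(\cdot\mid\hat x_i)$ achieving (or nearly achieving, up to $\vep$) $T_{c_i}(\mu_i,\nu_i(\cdot\mid\hat x_i))$. Measurable selection of such near-optimal couplings in $\hat x_i$ is standard (Kantorovich's problem depends measurably on the marginals), so this produces a bona fide probability measure $\pi$ on $(\XX^{(n)})^2$. One must check that the $\mu$-marginal of $\pi$ is exactly $\prod_i\mu_i=\mu$: this holds because at each step the $y_i$-marginal of $\pi_i^{\,\hat x_i}$ is $\mu_i$ regardless of the conditioning, so the $y$-coordinates come out independent with the right laws; the $\nu$-marginal is $\nu$ by construction of the sequential disintegration.

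With $\pi$ in hand, the cost estimate is immediate by additivity of $\oplus_i c_i$ and the tower property:
\[
\iint \oplus_i c_i(x,y)\,\pi(dx,dy)=\sum_{i=1}^n \iint c_i(x_i,y_i)\,\pi(dx,dy)=\sum_{i=1}^n \ee^\nu\!\left[\iint c_i(x_i,y_i)\,\pi_i^{\,\hat x_i}(dx_i,dy_i)\right],
\]
and the inner integral is $\le T_{c_i}(\mu_i,\nu_i(\cdot\mid\hat x_i))+\vep_i$ by the choice of near-optimal coupling. Taking $\vep_i\to 0$ and recognizing $\ee^\nu[T_{c_i}(\mu_i,\nu_i)]$ as the notation for $\ee^\nu[T_{c_i}(\mu_i,\nu_i(\cdot\mid\hat x_i))]$ in the statement, one gets $T_{\oplus c_i}(\mu,\nu)\le \sum_i \ee^\nu T_{c_i}(\mu_i,\nu_i)$. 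Note the argument does not require ordering the sites in the Marton sense because the $\mu$-side factors completely; the conditioning only propagates on the $\nu$-side, which is harmless.

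The main obstacle I anticipate is the measurable-selection issue: ensuring that $\hat x_i\mapsto \pi_i^{\,\hat x_i}$ (an optimal or $\vep$-optimal Kantorovich plan between $\mu_i$ and $\nu_i(\cdot\mid\hat x_i)$) can be chosen to depend measurably on $\hat x_i$, so that the glued object $\pi$ is genuinely a measure. This is where one needs lower semicontinuity of $c_i$, completeness and separability of $\XX_i$ (both standing assumptions), and a standard measurable-selection theorem for the Kantorovich problem; alternatively one can avoid explicit selection by using that $T_{c_i}$ itself is a measurable function of the pair of marginals and building $\pi$ as a suitable regular conditional measure, but either way this technical point is the crux. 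Everything else — the additivity of the cost, the tower property, and the verification of the two marginals — is routine once the coupling is legitimately constructed.
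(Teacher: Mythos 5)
Your approach has a genuine gap that cannot be repaired along the lines you sketch. The coupling you propose to build does not have $\mu$ as its second marginal, and the claim that ``the $y$-coordinates come out independent with the right laws'' is false.

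Concretely, after sampling the whole vector $x=(x_1,\dots,x_n)$ from $\nu$ (which is what your construction amounts to once one fixes the confusion that $\hat x_i$ is not available after only $i-1$ steps --- it involves the \emph{future} coordinates $x_{i+1},\dots,x_n$ as well), and then drawing each $y_i$ from the conditional law $\pi_i^{\hat x_i}(dy_i\mid x_i)$ of an optimal plan between $\nu_i(\cdot\mid\hat x_i)$ and $\mu_i$, one does obtain $\ee[y_i\in A]=\mu_i(A)$ for each $i$ separately. But conditionally on $x$ the $y_i$'s are independent with laws depending on $x$, and integrating out $x$ destroys joint independence. A tiny counterexample: take $n=2$, $\XX_1=\XX_2=\{0,1\}$, $\mu_1=\mu_2=\mathrm{Ber}(1/2)$, $c_i(x,y)=\mathbf{1}_{x\neq y}$, and let $\nu$ charge $(0,0),(0,1),(1,1)$ with masses $1/2,1/4,1/4$. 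Working out the optimal plans $\pi_1^{x_2}$, $\pi_2^{x_1}$ and gluing as you propose, one finds $\pi(y_1=0,y_2=0)=3/16\neq 1/4=\mu(\{(0,0)\})$. So the constructed $\pi$ is not a coupling of $\mu$ and $\nu$, and the cost estimate, however clean, bounds the wrong quantity. Your measurable-selection worry is real but is not the obstruction; the obstruction is the marginal constraint.

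The confusion is between the sequential conditionals $\nu^i(\cdot\mid x_1,\dots,x_{i-1})$ and the full conditionals $\nu_i(\cdot\mid\hat x_i)$. A genuine forward gluing construction can only use $\nu^i(\cdot\mid x_1,\dots,x_{i-1})$, and it does give a valid coupling of $\mu$ and $\nu$; but then the bound one proves is Marton's, $T_{\oplus c_i}(\mu,\nu)\le \ee^\nu\sum_i T_{c_i}(\mu_i,\nu^i(\cdot\mid X_1,\dots,X_{i-1}))$, which depends on an ordering of the sites and is \emph{not} the statement of Lemma~\ref{lem23}. The paper explicitly avoids this: its proof does \emph{not} build a coupling out of the one-site plans. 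Instead, it starts from an optimal global coupling $(Z_i=(X_i,Y_i))_i$ for $T_{\oplus c_i}(\mu,\nu)$ and, for each fixed $i$ \emph{separately}, replaces only the $i$-th pair by $(\tilde X_i,\tilde Y_i)$ whose conditional law given $(Z_j)_{j\neq i}$ is optimal for $\mu_i$ versus $\nu_i(\cdot\mid Y_j,j\neq i)$. This one-coordinate surgery preserves both marginals exactly: $\tilde X_i$ is independent of the rest with law $\mu_i$, and $(\,(Y_j)_{j\neq i},\tilde Y_i\,)$ has law $\nu$ by the defining property of the regular conditional distribution. Optimality of the original plan then forces $\ee\,c_i(X_i,Y_i)\le \ee\,c_i(\tilde X_i,\tilde Y_i)=\ee^\nu T_{c_i}(\mu_i,\nu_i)$ for each $i$, and summing gives the claim. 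The key idea you are missing is this comparison-against-the-optimizer argument; no global gluing, and hence no compatibility problem for the $y$-marginal.
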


\noindent \textbf{Notation.}\ The expectation $\ee^\nu$ simply
means integration with respect to $\nu.$

\bprf Let $(Z_i=(X_i, Y_i))_{i=1,\cdots, n}$ be a sequence of
random variables  valued in $\prod_{i=1}^n \XX_i^2$ defined on
some probability space $(\Omega, \FF, \pp)$,  realizing $T_{\oplus
c_i}(\mu,\nu)$, i.e., the law of $X=(X_i)_{i=1,\cdots, n}$ is
$\mu=\prod_{i=1}^n\mu_i$, the law of $Y=(Y_i)_{i=1,\cdots, n}$ is
$\nu$, and
$$
 \ee \sum_{i} c_i(X_i, Y_i)= T_{\oplus c_i}(\mu,\nu).
$$
For each $i$ fixed, construct a couple of r.v. $(\tilde X_i,
\tilde Y_i)$ so that its conditional law given $(Z_j)_{j\ne i}$ is
a coupling of $(\mu_i(dx_i), \nu_i(dx_i| Y_j, j\ne i)$ and
$\pp$-a.s.,
$$
\ee [c_i(\tilde X_i, \tilde Y_i)|Z_j, j\ne i]= T_{c_i}(\mu_i,
\nu_i(\cdot| Y_j, j\ne i)).
$$
Obviously $(X_j, j\ne i; \tilde X_i)$ and  $(Y_j, j\ne i; \tilde
Y_i)$ (more precisely their joint law) constitute  a coupling of
$(\mu,\nu)$. Thus $\ee \sum_{j} c_j(X_j, Y_j)\le \ee [\sum_{j\ne
i} c_j(X_j, Y_j) + c_i(\tilde X_i, \tilde Y_i)]$ or
$$\ee c_i(X_i,
Y_i)\le \ee c_i(\tilde X_i, \tilde Y_i)= \ee T_{c_i}(\mu_i,
\nu_i(\cdot| Y_j, j\ne i)).$$ Consequently
$$
T_{\oplus c_i}(\mu,\nu)= \ee \sum_{i=1}^nT_{c_i}(X_i, Y_i) \le \ee
\sum_{i=1}^n T_{c_i}(\mu_i,\nu_i(\cdot| Y_j, j\ne i))= \int
\sum_{i=1}^n T_{c_i}(\mu_i, \nu_i)\,d\nu.
$$
\nprf

The following additivity property of the Fisher information will
be needed. It holds even in the dependent case.

\begin{lem}\label{lem26} Let $\nu, \mu$ be probability measures on
$\prod_{i=1}^n \XX_i$ such that $I(\nu|\mu)<+\infty$, let $\mu_i$,
$\nu_i$  be the regular conditional distributions of $x_i$ knowing
$\hat x_i$ under $\mu,\ \nu$. Then \begin{equation}\label{lem26a}
I_{\oplus_i\EE_i}(\nu|\mu) = \ee^\nu \sum_i I_i(\nu_i|\mu_i).
\end{equation} \end{lem}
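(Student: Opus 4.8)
The plan is to compute both sides directly from the definitions, exploiting that $\mu=\prod_i\mu_i$ is a product measure (recall that in this part of the paper $\mu=\prod_{i=1}^n\mu_i$) and that each form $\EE_i$ is bilinear. Since by hypothesis $I(\nu|\mu)=I_{\oplus_i\EE_i}(\nu|\mu)<+\infty$, the measure $\nu$ is absolutely continuous with respect to $\mu$; put $f:=d\nu/d\mu$, so that $\sqrt f\in\dd(\oplus_i\EE_i)$, which by the definition (\ref{a24}) means that for every $i$ and $\mu$-a.e.\ $\hat x_i$ the slice $(\sqrt f)_i$ lies in $\dd(\EE_i)$, that $\hat x_i\mapsto\EE_i\big((\sqrt f)_i,(\sqrt f)_i\big)$ is measurable, and that
\begin{equation*}
I_{\oplus_i\EE_i}(\nu|\mu)=\int_{\XX^{(n)}}\sum_{i=1}^n\EE_i\big((\sqrt f)_i,(\sqrt f)_i\big)\,d\mu .
\end{equation*}
So it suffices to show $\ee^\nu I_i(\nu_i|\mu_i)=\int_{\XX^{(n)}}\EE_i\big((\sqrt f)_i,(\sqrt f)_i\big)\,d\mu$ for each $i$. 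First I would identify $\nu_i$: writing $f_i$ for the $x_i$-slice of $f$ (so $f_i=\big((\sqrt f)_i\big)^2$) and $\mu_i(f_i):=\int_{\XX_i}f_i\,d\mu_i$, the product structure of $\mu$ shows that the $\hat x_i$-marginal of $\nu$ has density $\mu_i(f_i)$ with respect to $\hat\mu_i:=\prod_{j\neq i}\mu_j$, whence the regular conditional law of $x_i$ given $\hat x_i$ under $\nu$ is $\nu_i=\big(f_i/\mu_i(f_i)\big)\mu_i$ for $\nu$-a.e.\ $\hat x_i$; here $\mu_i(f_i)>0$ for $\nu$-a.e.\ $\hat x_i$, so this is meaningful.

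Since $\mu_i(f_i)$ does not depend on $x_i$, bilinearity of $\EE_i$ then gives
\begin{equation*}
I_i(\nu_i|\mu_i)=\EE_i\Big(\sqrt{f_i/\mu_i(f_i)}\,,\,\sqrt{f_i/\mu_i(f_i)}\Big)=\frac{1}{\mu_i(f_i)}\,\EE_i\big((\sqrt f)_i,(\sqrt f)_i\big)\qquad\text{for }\nu\text{-a.e.\ }\hat x_i .
\end{equation*}
Now I would integrate this against $\nu=f\mu$ and apply Fubini, writing $\mu=\mu_i\otimes\hat\mu_i$: both $\EE_i\big((\sqrt f)_i,(\sqrt f)_i\big)$ and $\mu_i(f_i)$ depend on $\hat x_i$ only, while $\int_{\XX_i}f\,d\mu_i=\mu_i(f_i)$, so the factor $\mu_i(f_i)$ cancels and one is left with $\ee^\nu I_i(\nu_i|\mu_i)=\int\EE_i\big((\sqrt f)_i,(\sqrt f)_i\big)\,d\hat\mu_i$, which equals $\int_{\XX^{(n)}}\EE_i\big((\sqrt f)_i,(\sqrt f)_i\big)\,d\mu$ because the integrand involves no $x_i$ and $\mu_i$ is a probability measure. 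Summing over $i$ and invoking the formula for $I_{\oplus_i\EE_i}(\nu|\mu)$ from the first paragraph,
\begin{equation*}
\ee^\nu\sum_{i=1}^n I_i(\nu_i|\mu_i)=\sum_{i=1}^n\int_{\XX^{(n)}}\EE_i\big((\sqrt f)_i,(\sqrt f)_i\big)\,d\mu=\oplus_i\EE_i(\sqrt f,\sqrt f)=I_{\oplus_i\EE_i}(\nu|\mu),
\end{equation*}
which is (\ref{lem26a}).

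I expect the only genuinely delicate point to be the domain bookkeeping tucked into the first paragraph: reading off from (\ref{a24}) that $\sqrt f\in\dd(\oplus_i\EE_i)$ is equivalent to $(\sqrt f)_i\in\dd(\EE_i)$ for a.e.\ $\hat x_i$ together with measurability and integrability of $\hat x_i\mapsto\EE_i\big((\sqrt f)_i,(\sqrt f)_i\big)$, and checking that multiplying a slice by the positive $\hat x_i$-measurable scalar $\mu_i(f_i)^{-1/2}$ keeps it in $\dd(\EE_i)$ and scales $\EE_i$ by $\mu_i(f_i)^{-1}$ --- all of which is built into the definition of the closed bilinear forms $\EE_i$ and the convention for $I_i$. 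Everything else is Fubini's theorem. (If the finiteness hypothesis on $I(\nu|\mu)$ is dropped, the same computation shows that both sides of (\ref{lem26a}) are simultaneously $+\infty$, but this refinement is not needed here.)
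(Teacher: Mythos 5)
Your argument is correct and is essentially the paper's proof, written out with the Fubini step made explicit. One small remark on scope: you invoke the product structure $\mu=\prod_i\mu_i$ at the outset, but Lemma~\ref{lem26} is stated for an arbitrary probability measure $\mu$ on $\prod_i\XX_i$, with $\mu_i$ the regular conditional law of $x_i$ given $\hat x_i$ under $\mu$ --- and the sentence preceding the lemma stresses that the additivity ``holds even in the dependent case.'' The paper's proof covers that generality by replacing your Fubini step with the tower property of conditional expectation: since $\mu_i(f_i)^{-1}\,\EE_i(\sqrt{f_i},\sqrt{f_i})$ is $\sigma(\hat x_i)$-measurable and $\ee^\mu[f\mid\hat x_i]=\mu_i(f_i)$, one has $\ee^\mu\bigl[f\cdot\mu_i(f_i)^{-1}\EE_i(\sqrt{f_i},\sqrt{f_i})\bigr]=\ee^\mu\bigl[\EE_i(\sqrt{f_i},\sqrt{f_i})\bigr]$, with no need for $\mu$ to factor. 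Your version suffices for the application in Theorem~\ref{thm22}, where $\mu=\mu^{\otimes n}$ is indeed a product, but as a proof of the lemma as stated it establishes only the independent case; the fix is a one-line substitution of the conditional-expectation identity for Fubini. Everything else --- the identification $d\nu_i/d\mu_i=f_i/\mu_i(f_i)$, the scaling $I_i(\nu_i|\mu_i)=\mu_i(f_i)^{-1}\EE_i(\sqrt{f_i},\sqrt{f_i})$, the cancellation, and the domain bookkeeping you flag at the end --- matches the paper.
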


\bprf Let $f=d\nu/d\mu$. Then $d\nu_i/d\mu_i= f/ \mu_i(f)=
f_i/\mu_i(f_i), \nu$-a.s. (recalling that $f_i$ is the function
$f$ of $x_i$ with $\hat x_i$ fixed). For $\nu$-a.e. $\hat x_i$
fixed,
$$
I_i(\nu_i|\mu_i)= \EE_i\left(\sqrt{\frac{f_i}{\mu_i(f_i)}},
\sqrt{\frac{f_i}{\mu_i(f_i)}}\right)= \frac 1{\mu_i(f_i)}
\EE_i(\sqrt{f_i}, \sqrt{f_i})
$$
(for $\mu_i(f_i)$ is constant with $\hat x_i$ fixed). We obtain
$$
\aligned \ee^{\nu}\sum_{i=1}^n I_i(\nu_i|\mu_i) &= \ee^{\mu}f
\sum_{i=1}^n
\frac 1{\mu_i(f_i)} \EE_i(\sqrt{f_i}, \sqrt{f_i})\\
&=
\ee^{\mu} \sum_{i=1}^n  \EE_i(\sqrt{f_i}, \sqrt{f_i})\\
&=\oplus_i\EE_i (\sqrt{f}, \sqrt{f})=I_{\oplus_i\EE_i}(\nu|\mu),
\endaligned
$$
which completes the proof. \nprf

The above additivity  is different from the super-additivity of
the Fisher information for product measure obtained by E. Carlen
\cite{Car91}.

\bprf[Proof of Theorem \ref{thm22}] Without loss of generality we
may assume that $I(\nu|\mu)<+\infty$. For simplicity write
$\alpha=\alpha_1\Box\cdots\Box \alpha_n$. By Lemma \ref{lem23},
Jensen's inequality  and the definition of $\alpha$,
$$
\aligned \alpha(T_{\oplus c_i}(\nu,\mu))
&\le \alpha\left(\ee^{\nu}\sum_{i=1}^n T_{c_i}(\nu_i, \mu_i)\right)\\
&\le \ee^{\nu}\alpha\left(\sum_{i=1}^n T_{c_i}(\nu_i, \mu_i)\right)\\
&\le \ee^{\nu}\sum_{i=1}^n\alpha_i( T_{c_i}(\nu_i, \mu_i))\\
&\le \ee^{\nu}\sum_{i=1}^n I_i(\nu_i|\mu_i).
\endaligned
$$
The last quantity is equal to $I_{\oplus\EE_i}(\nu|\mu)$, by Lemma
\ref{lem26}. \nprf

As an example of application, let $(X^i_t)_{t\ge 0}, i=1,\cdots,
n$ be $n$ Markov processes with the same transition semigroup
$(P_t)$ and the same symmetrized Dirichlet form $\EE$ on
$L^2(\mu)$, and conditionally independent once
$(X^i_0)_{i=1,\cdots, n}$ is fixed. Then $X_t:=(X^1_t,\cdots,
X^n_t)$ is a Markov process with the symmetrized Dirichlet form
given by
$$
\oplus_n\EE(g, g)= \int \sum_{i=1}^n \EE(g_i, g_i)\,
\mu(dx_1)\cdots\mu(dx_n)
$$
which is the $n$-fold sum-Dirichlet form of $\EE$.

\bcor\label{cor24} Assume that $\mu$ satisfies $T_cI$ on $\XX$
with $\alpha$ convex.  Then $\mu^{\otimes n}$ satisfies

\begin{equation}\label{cor24a}
n \alpha\left(\frac{T_{\oplus_n c}(\nu, \mu^{\otimes n})}{n}
\right) \le I_{\oplus_n\EE}(\nu|\mu^{\otimes n}),\ \forall \nu\in
M_1(\XX^n).
\end{equation}
In particular for all $(u,v)\in \Phi_c$, for all initial measure
$\beta$ on $\XX^n$ with $d\beta/d\mu^{\otimes n}\in
L^2(\mu^{\otimes n})$ and for any $t,r>0$,
\begin{equation}\label{cor24b}
\pp_\beta\left(\frac 1n\sum_{i=1}^n \frac 1t \int_0^t u(X^i_s)\,ds
\ge\mu(v)+r\right)\le \left\|\frac{d\beta}{d\mu^{\otimes
n}}\right\|_2 e^{-nt\alpha (r)}.
\end{equation}
\ncor

\bprf As $\alpha^{\Box n}(r)= n \alpha(r/n),$ (\ref{cor24a})
follows from Theorem \ref{thm22}. Noting that for $u,v\in \Phi_c$,
$(\sum_{i=1}^nu(x_i), \sum_{i=1}^nv(x_i))$ as a couple of
functions on $\XX^n$ belongs to $\Phi_{\oplus_n c}$, we obtain
(\ref{cor24b}) by Theorem \ref{thm-a2.1}. \nprf

The tensorization of $W_pI$ in the dependent Gibbs measure case is
carried out in Gao and Wu \cite{p-GaoW}.

\subsection{Relations between $W_2I,$  Poincar\'e and log-Sobolev inequalities}

In the rest of the paper we are interested in two particular cases
of $T_cI$: $W_1I(\kappa)$ and $W_2I(\kappa)$ introduced at
Corollary \ref{cor21}.
\\
\noindent \textbf{Notation} (Spectral gap). As usual, one says
that $\mu$ satisfies a Poincar\'e inequality if
\begin{equation*}
    \Var_\mu(g) \le c\,\EE(g, g), \ \forall g\in
\dd_2(\LL)
\end{equation*}
for some \emph{finite} $c\ge0$ and a  Dirichlet form $\EE$ which
is closable in $L^2(\mu).$ We denote $\SG(\mu)$ the best constant
$c$ in the above Poincar\'e inequality. It is the inverse of the
\emph{spectral gap} of $\LL.$

From the work of Otto-Villani \cite{OVill00}, we have the
following  observations.

\begin{prop}\label{prop25} Let $\XX$ be a complete connected Riemannian
manifold and $\mu= e^{-V(x)} dx/Z$ where  $dx$ is the Riemannian
volume measure, $V\in C^2(\XX)$ and $Z=\int_\XX e^{-V}
dx<+\infty$. Let $\dd(\EE)$ be the space $H^1(\XX,\mu)$ of those
functions $g\in L^2(\XX,\mu)$ such that $\nabla g\in L^2(TM, \mu)$
in the sense of distribution and consider the Dirichlet form,
$$
\EE_\nabla (g,g):= \int_\XX |\nabla g|^2 \,d\mu, \ g\in \dd(\EE)
$$
and the associated Fisher-Donsker-Varadhan information
$I(\nu|\mu),$ see (\ref{eq-05}).
\begin{enumerate}[(a)]
\item If the log-Sobolev inequality below
$$
 H(\nu|\mu) \le 2c\, I(\nu|\mu),\ \forall \nu
$$
is satisfied, then $\mu$ satisfies $W_2I(c)$. In other words the
best constant $\LS(\mu)$ in the log-Sobolev  inequality above
satisfies

$$\LS(\mu)\ge \WWI(\mu).$$

 \item If $W_2I(c)$ holds, then the Poincar\'e
inequality holds with constant $c$. In other words the inverse
spectral gap $\SG(\mu)$ of $\EE_\nabla$ satisfies $$\WWI(\mu)\ge
\SG(\mu).$$

 \item Assume that the Bakry-Emery curvature $\rm{Ric} + {\rm Hess} V$ is
bounded from below by $K\in\rr$, where  $\rm{Ric}$ is the Ricci
curvature and ${\rm Hess} V$ is the Hessian of $V.$ If $W_2I(c)$
holds with $cK\le 1$ (this is possible by Part (a) and
Bakry-Emery's criterion), then the log-Sobolev inequality
$$
H(\nu|\mu) \le 2(2c - c^2K)\,I(\nu|\mu),\ \forall \nu
$$
is also satisfied or equivalently
$$
2\WWI(\mu) - K\WWI(\mu)^2\ge \LS(\mu) .
$$
\end{enumerate}
\end{prop}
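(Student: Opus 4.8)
The plan is to lean entirely on the Otto–Villani machinery, observing that the Fisher–Donsker–Varadhan information in the convention of this paper is exactly $I = I_F/4$, i.e. $I(\nu|\mu) = \int_\XX |\nabla\sqrt f|^2\,d\mu$ as recorded in (\ref{eq-05}). The key dictionary is that the quantity called $2c\,I(\nu|\mu)$ here corresponds to $\frac{c}{2}\,I_F(\nu|\mu)$, which is the form in which the log-Sobolev inequality appears in \cite{OVill00}, and that $W_2I(c)$ as defined in Corollary \ref{cor21} reads $W_2^2(\nu,\mu)\le 4c^2 I(\nu|\mu) = c^2 I_F(\nu|\mu)$, i.e. $W_2(\nu,\mu)\le c\sqrt{I_F(\nu|\mu)}$. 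So all three parts are direct translations of results in \cite{OVill00} once one is careful with the factor-of-4 normalization.

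For part (a): Otto and Villani prove that the log-Sobolev inequality $H(\nu|\mu)\le \frac{C}{2} I_F(\nu|\mu)$ implies the "HWI"-type bound, and in particular the dimension-free inequality $W_2(\nu,\mu)\le \sqrt{C}\,\sqrt{I_F(\nu|\mu)}$ (this is the transportation–information analogue of the fact that log-Sobolev implies Talagrand's inequality; it is obtained by running the same semigroup interpolation argument but stopping at the Fisher information rather than integrating out to the entropy, or simply by applying HWI with $H\ge 0$). With $C=2c$ this is precisely $W_2^2(\nu,\mu)\le 4c^2 I(\nu|\mu)$, i.e. $W_2I(c)$; hence $\LS(\mu)\ge\WWI(\mu)$. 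For part (b): linearize $W_2I(c)$ at $\mu$. Take $\nu = (1+\vep h)\mu$ with $\int h\,d\mu = 0$ and $h$ bounded; as $\vep\to0$ one has $I(\nu|\mu) = \frac{\vep^2}{4}\int|\nabla h|^2\,d\mu + o(\vep^2)$ and $W_2^2(\nu,\mu) = \vep^2\, W_2^2$-linearization $= \vep^2\sup\{(\int h g\,d\mu)^2 : \int|\nabla g|^2 d\mu\le 1\} + o(\vep^2)$ (the Benamou–Brenier / weighted-$\dot H^{-1}$ expression for the linearized Wasserstein metric). Feeding these into $W_2^2\le 4c^2 I$ and optimizing over $h$ yields exactly $\Var_\mu(g)\le c\,\EE_\nabla(g,g)$, so $\SG(\mu)\le\WWI(\mu)$; equivalently one can cite the Otto–Villani observation that their $W_2$–transportation inequality implies Poincaré with the same constant. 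For part (c): this is the reverse HWI estimate under a lower Bakry–Émery curvature bound $K$. Otto–Villani's HWI inequality reads, in the $I_F$ normalization,
$$
H(\nu|\mu)\le W_2(\nu,\mu)\sqrt{I_F(\nu|\mu)} - \frac{K}{2}W_2^2(\nu,\mu).
$$
Now insert the hypothesis $W_2(\nu,\mu)\le c\sqrt{I_F(\nu|\mu)}$; the right-hand side is, for fixed $a:=\sqrt{I_F(\nu|\mu)}$, the function $w\mapsto wa - \frac K2 w^2$ evaluated on $w\in[0,ca]$, and since $cK\le 1$ this function is nondecreasing on that interval, so it is maximized at $w=ca$, giving $H(\nu|\mu)\le (c - \frac K2 c^2) a^2 = (c-\frac K2 c^2) I_F(\nu|\mu) = 2(2c - c^2K)\,I(\nu|\mu)$. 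That is the claimed log-Sobolev inequality, i.e. $\LS(\mu)\le 2\WWI(\mu) - K\WWI(\mu)^2$; the parenthetical remark that $W_2I(c)$ with $cK\le 1$ is attainable follows from combining part (a) with the Bakry–Émery criterion ($K>0$ gives log-Sobolev with constant $1/K$, hence $W_2I(1/K)$, and $K\cdot(1/K)=1$).

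The main obstacle is not the inequalities themselves — which are essentially quoted from \cite{OVill00} — but two bookkeeping points that must be handled carefully. First, the constant reconciliation: one must track the $I = I_F/4$ convention consistently through (a), (b) and (c), and in particular verify that the monotonicity condition "$w\mapsto wa-\frac K2 w^2$ is increasing on $[0,ca]$" is exactly equivalent to "$cK\le 1$", which is what makes the sharp constant $2c-c^2K$ come out. Second, the regularity/density issues: Otto–Villani's arguments are carried out for sufficiently smooth positive densities $\nu = f\mu$, whereas the inequalities are asserted for all $\nu\in M_1(\XX)$, so one needs a standard approximation argument (truncating and mollifying $f$, using lower semicontinuity of $I(\cdot|\mu)$ in $\nu$ and continuity of $W_2$) to pass to general $\nu$; alternatively, and more honestly, the statement is simply that these relations hold "from the work of Otto–Villani \cite{OVill00}" and the proof can be limited to exhibiting the three translations above, the density arguments being identical to those in the source. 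I would present parts (a)–(c) in that order, doing (c) last since it is the only one requiring the full HWI inequality rather than a weakened consequence of it.
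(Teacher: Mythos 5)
Your overall strategy — translate everything into the Otto–Villani $I_F$ normalization (using $I=I_F/4$, so that $W_2I(c)$ reads $W_2\le c\sqrt{I_F}$) and read the three parts off \cite{OVill00} — is exactly the paper's, and parts (b) and (c) match the paper closely. For (b), the paper also linearizes along $\mu_\vep=(1+\vep g)\mu$, but it deploys the explicit quantitative estimate from \cite[p.394]{OVill00},
$\int g^2\,d\mu\le \sqrt{\EE_\nabla(g,g)}\,W_2(\mu_\vep,\mu)/\vep + (r/\vep)W_2^2(\mu_\vep,\mu)$,
rather than the Otto-calculus $\dot H^{-1}$ expansion of $W_2^2$ you invoke; the former is a one-sided bound and is easier to justify rigorously than a full two-sided linearization of $W_2^2$. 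For (c), the paper simply writes ``direct application of HWI'' while you spell out the monotonicity of $w\mapsto wa-\tfrac K2 w^2$ on $[0,ca]$ under $cK\le 1$; that unpacking is correct and exactly what the cited HWI $(\ref{HWI-OV})$ delivers.

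Part (a) is where there is a genuine gap. First, by your own dictionary $2cI=\tfrac c2 I_F$, so the paper's log-Sobolev hypothesis corresponds to $C=c$, not $C=2c$. Second, the implication you attribute to Otto–Villani should read $W_2\le C\sqrt{I_F}$, not $W_2\le\sqrt{C}\,\sqrt{I_F}$: the latter is even dimensionally inconsistent ($W_2$ scales like a length, $\sqrt{I_F}$ like an inverse length, so the prefactor must scale like a squared length, as $C$ does and $\sqrt C$ does not), and with your values it yields $W_2^2\le 8cI$ rather than the needed $W_2^2\le 4c^2I$. Third, neither of your suggested mechanisms actually produces the bound: ``HWI with $H\ge 0$'' gives $W_2\le (2/K)\sqrt{I_F}$ under positive curvature, a bound involving $K$ and not the log-Sobolev constant, and ``running the semigroup interpolation but stopping at Fisher information'' is not a recognizable step of the Otto–Villani argument. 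The correct and in fact simpler route — the one the paper uses — is an elementary two-step chain with no new semigroup computation: log-Sobolev with constant $2c$ gives Talagrand $W_2^2\le 2cH$ (Otto–Villani, same constant), and then one bounds $H\le 2cI$ on the right-hand side, obtaining $W_2\le\sqrt{2cH}\le 2c\sqrt{I}$, i.e.\ $W_2^2\le 4c^2 I$.
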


 \bprf Before the proof, let us remind the reader that $I=I_F/4$ where
$I_F$ is $I$ in Otto-Villani's paper \cite{OVill00}.
\\\medskip\noindent $\bullet$ Statement (a). The
proof is direct, as by \cite{OVill00} or \cite{BGL01} a
logarithmic Sobolev inequality implies the $W_2H$ (sometimes
called $T_2$) inequality so that
$$W_2(\nu,\mu)\le\sqrt{ 2c H(\nu|\mu)}\le 2c \sqrt{I(\nu|\mu)}$$
which is the announced conclusion.\\\medskip

\noindent$\bullet$ Statement (b). The proof follows from the usual
linearization procedure. Set $\mu_\vep=(1+\vep g)\mu$ for some
smooth and bounded $g$ with $\int g\,d\mu=0$, we easily get: as
$\vep\to 0$,
$$I(\mu_\vep|\mu)/\vep^2\to \frac 14 \EE_\nabla(g,g)$$
and by Otto-Villani \cite[p.394]{OVill00}, there exists $r$ such
that
$$
\int g^2\,d\mu\le
\sqrt{\EE_\nabla(g,g)}\frac{W_2(\mu_\vep,\mu)}{\vep}
+\frac{r}{\vep}W_2^2(\mu_\vep,\mu).
$$
Using now $W_2I(c)$ we get
$$
\int g^2\,d\mu\le
2c\sqrt{\EE_\nabla(g,g)}\sqrt{\frac{I(\mu_\vep|\mu)}{\vep^2}}
+\frac{4rc^2}{\vep}I(\mu_\vep|\mu).
$$
Letting $\vep\to0$ gives the result.\\\medskip

\noindent$\bullet$ Statement (c). This result is a direct
application of the HWI inequality \cite[Th.3]{OVill00} in the
Euclidean case and \cite{BGL01} for a general Riemannian manifold:

\begin{equation}\label{HWI-OV}
H(\nu|\mu)\le 2 W_2(\nu,\mu)\sqrt{I(\nu|\mu)}- \frac K2 W_2^2(\nu,\mu).
\end{equation}
\nprf

\bexa\label{ex-01} \rm Let $\mu=\NN(0,\Sigma)$ be the centered
Gaussian measure on $\rr^n$, with positive definite covariance
matrix $\Sigma$. We claim that with respect to the Euclidean
metric $|\cdot|$,
$$
\WI(\mu)= \WWI(\mu)=\lambda^{-1}_{\mathrm{min}}(\Sigma^{-1})
$$
where $\lambda_{\mathrm{min}}$ denotes the minimal eigenvalue.
Indeed it is well known that with respect to the usual gradient
$\nabla$ on $\rr^n$ (cf. Ledoux \cite{Led01}),
$$
\SG(\mu)=\LS(\mu)=\lambda^{-1}_{\mathrm{min}}(\Sigma^{-1}).
$$
Then by Proposition \ref{prop25}(a) and (b),
$$
\WI(\mu)\le \WWI(\mu)=\lambda^{-1}_{\mathrm{min}}(\Sigma^{-1}).
$$
On the other hand, let $\nu=\NN(m,\Sigma)$ where $m\in\rr^n$. Then
$W_1(\nu,\mu)=|m|$ (indeed $W_p(\nu,\mu)=|m|$ for all $p\ge1$),
and $I(\nu|\mu)=\frac 14 |\Sigma^{-1}m|^2$. Thus
$$
\WI(\mu) \ge \left(\inf_{m\in\rr^n}
\frac{|\Sigma^{-1}m|}{|m|}\right)^{-1}=\lambda^{-1}_{\mathrm{min}}(\Sigma^{-1})
$$
completing the proof of the claim. \nexa

As is seen from this proposition, in the bounded below curvature
case, $W_2I(\kappa)$ is (qualitatively as constants are lost)
equivalent to a logarithmic Sobolev inequality. It is an
interesting question to know whether or not it is the case in full
generality. We do not believe this to be true, and the hint for
this conjecture comes from the fact that even for the usual $W_2H$
inequality, the only cases of measures satisfying $W_2H$ but not
log-Sobolev inequality which are known so far have an infinite
curvature \cite{p-CG04}. That is why in the rest of the paper we
are mainly interested in $W_1I(\kappa).$ Nevertheless, note that
$W_2I$ may easily be applied to obtain tensorization results in
some situations where dependence occurs and the log-Sobolev
inequality seems to be unfruitful, see \cite{p-GaoW}.

\section{Poincar\'e inequality implies Hoeffding's deviation inequality}

\subsection{Relations between Poincar\'e  and $W_1I$ inequalities}
The purpose of this section is to establish

\begin{thm}\label{thm2}
Let $((X_t), \pp_\mu)$ be a stationary  ergodic Markov process.
\begin{enumerate}[(a)]

\item The Poincar\'e inequality
\begin{equation}\label{Poincare}
 \Var_\mu(g) \le \SG\,\EE(g, g), \ \forall g\in
\dd_2(\LL)
\end{equation}
implies
\begin{equation}\label{thm2b}
\|\nu -\mu\|_{\textrm{TV}}^2\le 4\SG\, I(\nu|\mu),\ \forall \nu\in
M_1(\XX).
\end{equation}
In particular for every initial probability measure $\beta\ll\mu$
with $d\beta/d\mu\in L^2(\mu)$ and for all $u\in b\BB$, $t,r>0$,
\begin{equation}\label{thm2a} \pp_\beta\left(\frac 1t\int_0^t u(X_s)\,ds -
\mu(u)\ge r\right)\le
\left\|\frac{d\beta}{d\mu}\right\|_2\exp\left(-\frac{t
r^2}{\SG\delta(u)^2}\right)
\end{equation}
where  $\delta(u):=\sup_{x,y\in \XX}|u(x)-u(y)|$ is the
oscillation of $u.$

\item Conversely, under the additional assumption that $(X_t,
\pp_\mu)$ is reversible and $R_1:=\int_0^\infty e^{-t}P_t\, dt$ is
$\mu$-uniformly positive improving, if there is some
left-continuous and increasing $\alpha$ such that $\alpha(r)>0$
for all $r>0$ and
$$
\alpha\left(\|\nu -\mu\|_{\textrm{TV}}\right)\le I(\nu|\mu), \
\forall \nu\in M_1(\XX),
$$
then the Poincar\'e inequality (\ref{Poincare}) holds.

\item In other words when $d$ is the trivial metric,  $W_1I$ is
equivalent to  Poincar\'e's inequality in the symmetric and
uniformly positive improving case.
\end{enumerate}
\end{thm}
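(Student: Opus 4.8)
The plan is to treat the three parts in order; the real content is in (a) and (b), and (c) is just their conjunction once one records that for the trivial metric $d(x,y)=\mathbf{1}_{x\ne y}$ one has $T_d(\nu,\mu)=\tfrac12\|\nu-\mu\|_{\mathrm{TV}}$, with $\|\nu-\mu\|_{\mathrm{TV}}=\sup_{|u|\le1}\int u\,d(\nu-\mu)$.

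For (a) I would first discard the trivial case $I(\nu|\mu)=+\infty$ and write $\nu=f\mu$ with $\sqrt f\in\dd(\EE)$, so that $I(\nu|\mu)=\EE(\sqrt f,\sqrt f)$. The Poincar\'e inequality (\ref{Poincare}), which a priori holds on $\dd_2(\LL)$, extends to $\dd(\EE)$ by closure and applies to $g=\sqrt f$, giving $\Var_\mu(\sqrt f)=1-\mu(\sqrt f)^2\le\SG\,I(\nu|\mu)$. The bridge to total variation is the pointwise identity $|f-1|=|\sqrt f-1|\,(\sqrt f+1)$ together with Cauchy--Schwarz: since $\int(\sqrt f-1)^2\,d\mu=2(1-\mu(\sqrt f))$ and $\int(\sqrt f+1)^2\,d\mu=2(1+\mu(\sqrt f))$, one gets $\|\nu-\mu\|_{\mathrm{TV}}=\int|f-1|\,d\mu\le 2\sqrt{1-\mu(\sqrt f)^2}=2\sqrt{\Var_\mu(\sqrt f)}$, and squaring and combining with the previous bound yields (\ref{thm2b}). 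For the deviation estimate (\ref{thm2a}) I would feed (\ref{thm2b}) into the implication $(a)\Rightarrow(c)$ of Theorem \ref{thm-a2.1}(1), which needs no symmetry, taking $\Phi=\{(u,u):u\in b\BB,\ \delta(u)\le1\}$; for this $\Phi$ one has $T_\Phi(\nu,\mu)=\tfrac12\|\nu-\mu\|_{\mathrm{TV}}$, so (\ref{thm2b}) reads exactly $\alpha(T_\Phi(\nu,\mu))\le I(\nu|\mu)$ with $\alpha(r)=r^2/\SG$, and Theorem \ref{thm-a2.1} gives the bound with exponent $tr^2/\SG$ for observables of oscillation at most $1$. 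Rescaling $u\mapsto u/\delta(u)$ then produces the general statement.

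For (b) the idea is to reduce to a known dichotomy for the spectral gap. By the criterion of L.~Wu \cite{p-Wu01}, in the $\mu$-reversible case with $R_1$ $\mu$-uniformly positive improving, the Poincar\'e inequality holds (equivalently, the spectral gap of $\LL^\sigma$ is positive) as soon as $I(\nu|\mu)$ stays bounded away from $0$ when $\nu$ ranges over the probability measures at a fixed positive total-variation distance from $\mu$. The hypothesis $\alpha(\|\nu-\mu\|_{\mathrm{TV}})\le I(\nu|\mu)$, with $\alpha$ increasing and strictly positive on $(0,\infty)$, furnishes exactly this: for any fixed $a>0$, every $\nu$ with $\|\nu-\mu\|_{\mathrm{TV}}\ge a$ satisfies $I(\nu|\mu)\ge\alpha(a)>0$. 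Hence Wu's criterion applies and (\ref{Poincare}) follows; only the direction of that equivalence in which the positive improving assumption is genuinely needed is used here, the converse being essentially part (a). Part (c) is then immediate: with the trivial metric, $W_1I$ is the statement $\alpha(\tfrac12\|\nu-\mu\|_{\mathrm{TV}})\le I(\nu|\mu)$, so (a) gives Poincar\'e $\Rightarrow W_1I$ (absorbing the factor $\tfrac12$ into $\alpha$) and (b) gives $W_1I\Rightarrow$ Poincar\'e under reversibility and uniform positive improving.

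The step I expect to be the main obstacle is (b): one must invoke Wu's spectral-gap criterion in precisely the right form and make sure that "uniformly positive improving'' is exactly what rules out the pathology blocking the converse in general, namely that approximate eigenfunctions of $-\LL^\sigma$ at the bottom of the spectrum may concentrate, so that the densities they generate fail to stay a fixed total-variation distance from $\mu$ while having vanishing Fisher--Donsker--Varadhan information. Part (a), by contrast, is a short computation once the factorization $|f-1|=|\sqrt f-1|(\sqrt f+1)$ is noticed, plus a routine application of Theorem \ref{thm-a2.1}.
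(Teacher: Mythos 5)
Your proof of part (a) is correct and actually takes a cleaner route than the paper's. Instead of citing the stand-alone CKP-type inequality (Theorem \ref{thm3}, whose two proofs in the paper are a somewhat intricate trigonometric argument and a large-deviation argument for the pure jump process), you derive $\|\nu-\mu\|_{\mathrm{TV}}\le 2\sqrt{\Var_\mu(\sqrt f)}$ directly from the factorization $|f-1|=|\sqrt f-1|\,(\sqrt f+1)$ and Cauchy--Schwarz. This is exactly the classical Hellinger-to-total-variation bound that the paper's Remarks acknowledge (via Gibbs--Su) but do not use in the proof itself; your shortcut is both shorter and self-contained, and it keeps the sharp constant $4$. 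Your derivation of (\ref{thm2a}) via Theorem \ref{thm-a2.1}(1), $(a)\Rightarrow(c)$, with $\Phi=\{(u,u):\delta(u)\le1\}$ and $\alpha(r)=r^2/\SG$, is the same as the paper's, and you are right that this implication needs no reversibility.

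Part (b) is where you are hand-waving, and you already sensed it. You assert a "criterion of Wu'' in the form: uniform positive improving plus $\inf\{I(\nu|\mu):\|\nu-\mu\|_{\mathrm{TV}}\ge a\}>0$ for every $a>0$ implies a spectral gap. That statement is a plausible consequence, but it is not how the cited results are formulated, and you do not supply the bridge. The paper's actual route is: from $\alpha(\|\nu-\mu\|_{\mathrm{TV}})\le I(\nu|\mu)$ one first uses Theorem \ref{thm-a2.1}, $(a)\Rightarrow(c')$, to obtain the exponential deviation estimate
$\limsup_{t\to\infty}\frac1t\log\pp_\mu\left(\left|\frac1t\int_0^tu(X_s)\,ds-\mu(u)\right|>r\right)<0$
for all bounded $u$ and $r>0$; this in turn implies the Resolvent Tail-Norm Condition by \cite[Theorem 3.9]{p-Wu01}; and only then does the combination with $\mu$-uniform positive improvingness yield a spectral gap by \cite[Theorem 4.1]{p-Wu01} or \cite[Theorem 4.4]{GW06}. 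Your proposal skips the passage from the $T_\Phi I$ inequality to the deviation estimate and the passage from the deviation estimate to the tail-norm condition; without them you are invoking a spectral-gap criterion in a form that the reference does not directly provide. Part (c) is, as you note, just the bookkeeping of identifying $T_d=\tfrac12\|\cdot\|_{\mathrm{TV}}$ for the trivial metric, which both you and the paper handle identically.
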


Here the kernel $R_1$ is said $\mu$-uniformly positive improving,
if for any $\vep>0$,
$$
\inf_{A,B: \mu(A), \mu(B)\ge \vep} \mu(1_A R_11_B)>0.
$$
Note that if the symmetric semigroup $(P_t)$ is irreducible,
i.e.\! $R_1(x,A)=\int_0^\infty e^{-t} P_t(x,A)\,dt>0, \ \forall
x\in \XX$ for any $A\in\BB$ charged by $\mu$, then $R_1$ is
$\mu$-uniformly positive improving for every $t>0$ (cf.
\cite{p-Wu01, GW06}). A typical example of this situation is when
$P_t(x,dy)=p_t(x,y) \mu(dy)$ and $p_t(x,y)>0$,
$\mu\otimes\mu$-a.s.\! for some $t>0$. See \cite{p-Wu01, GW06} for
related results and references on this subject.

\brmk\label{rem31}{\rm \
\begin{enumerate}[(i)]
    \item Let $d(x,y)= \mathbf{1}_{x\ne y}$ (the trivial metric)
    and $\Phi=\{(u,u); \delta(u)\le 1\}$. Then
$$
\frac 12 \|\nu-\mu\|_{\textrm{TV}} =W_1(\nu,\mu)=T_\Phi(\nu,\mu).
$$
Hence (\ref{thm2b}) is exactly the inequality $T_\Phi I$ or
$W_1I(c)$ with $4c^2=\SG,$ and (\ref{thm2a}) is a direct
consequence of (\ref{thm2b}) and Corollary \ref{cor21}-(1-d).

    \item Hoeffding type inequality (\ref{thm2a}) improves a
similar result by Cattiaux and Guillin \cite{p-CG06}. Lezaud
\cite{Lez01} proved a deviation inequality, which is better than
(\ref{thm2a}) in the moderate deviation scale ($r$ very small),
nevertheless his proof involves a difficult argument based on
Kato's theory of perturbation of operators.

    \item Inequality (\ref{thm2a}) is only meaningful for $r$ small
    enough, since its left-hand side
    vanishes as soon as $r>\delta(u).$

    \item About the deviation inequality in Theorem
\ref{thm-a2.1}(c), several variants are already known. K.~Marton
\cite{Mar97} proved a Gaussian deviation inequality for Doeblin
recurrent Markov chains $(X_n)_{n\in \nn}$ by means of
$L^1$-transportation inequality for the law of the chain. Her
result is next generalized in Djellout and al. \cite{DGW03}.

\item Does $W_1I(c)$ imply the exponential decay: $W_1(\nu P_t,
\mu)\le C e^{-\delta t} W_{1}(\nu,\mu) ?$ In the trivial metric
case this decay means Doeblin recurrence and Theorem \ref{thm2}
gives a negative answer. Indeed, there exist reversible Markov
processes having a positive spectral gap which are not Doeblin
recurrent, for instance the Ornstein-Uhlenbeck processes.

\end{enumerate}
 }\nrmk

\bprf[Proof of Theorem \ref{thm2}] $\bullet$ Statement (a). As
noticed at Remark \ref{rem31}-(i), all we have to prove is the
transportation inequality (\ref{thm2b}). To this end we may assume
that $f=d\nu/d\mu$ satisfies $\sqrt{f}\in \dd(\EE)$. By the
inequality (\ref{thm1a}) in Theorem \ref{thm3} below and the
assumed Poincar\'e inequality,
$$
\frac 14\|\nu -\mu\|_{\textrm{TV}}^2 \le \Var_\mu(\sqrt{f})\le \SG
\EE(\sqrt{f}, \sqrt{f})=\SG I(\nu|\mu).
$$

\noindent $\bullet$ Statement (b). This converse part is based on
the third author's paper \cite{p-Wu01}. Indeed by Theorem
\ref{thm-a2.1} and our assumption,
$$
\limsup_{t\to\infty} \frac 1t \log\pp_\mu\left(\left|\frac
1t\int_0^t u(X_s)\,ds - \mu(u)\right|>r\right)<0
$$
for all $u:\XX\to\rr^d$ bounded and measurable ($d\ge 1$) and any
$r>0$. This implies by \cite[Theorem 3.9]{p-Wu01} that $(P_t)$
satisfies the Resolvent Tail-Norm Condition (named in
\cite{p-Wu01}). This last property together with the uniform
positive improving property implies the existence of a spectral
gap by \cite[Theorem 4.1]{p-Wu01} or \cite[Theorem 4.4]{GW06}.
\\
\noindent $\bullet$ Finally, (c) is a direct consequence of (a)
and (b). \nprf

\subsection{A CKP type inequality} During the proof of (a), we have used inequality (\ref{thm1a})
which is part of the Theorem \ref{thm3} below. The usual CKP
(Csisz\'ar-Kullback-Pinsker) inequality is
\begin{equation*}
    \frac 12 \|\nu-\mu\|_{\rm{TV}}^2 \le H(\nu|\mu),\ \forall
    \nu\in M_1(\XX).
\end{equation*}
We shall see during the proof of Lemma \ref{lem1} at (\ref{eq-12})
that (\ref{thm1a}) is of the similar form
\begin{equation*}
    \frac 14 \|\nu-\mu\|_{\rm{TV}}^2 \le I(\nu|\mu),\ \forall
    \nu\in M_1(\XX)
\end{equation*}
for some well chosen $I.$ This is the reason why it is called a
CKP type inequality.

Let $((X_t)_{t\ge0}, \mathbb{P} )$ be the pure jump Markov process
on the state space $\XX$ with generator $\mathcal{L}g(x)=\int_\XX
[g(y)-g(x)]\,\mu(dy),$ $g\in b\mathcal{B}$ and initial law $\mu\in
M_1(\XX).$ A representation of $X$ is given by
\begin{equation}\label{eq-10}
    X_t=Y_{N_t},\quad t\ge0
\end{equation}
where $N$ is a Poisson process with parameter one which is
independent of the sequence $(Y_n)_{n\ge0}$ of independent
identically $\mu$-distributed $\XX$-valued random variables.

\begin{thm}\label{thm3}
 Let $\mu$ be any probability measure on $\XX$ and $X$
be its associated process defined at (\ref{eq-10}).
\begin{enumerate}
    \item The following two equivalent families of inequalities hold
    true:
\begin{enumerate}[(a)]
    \item For every $\mu$-probability density $f,$ i.e.\! $f\ge 0$ and
$\mu(f)=1$,
\begin{equation}\label{thm1a}
\|f\mu -\mu\|_{\textrm{TV}}^2 \le 4\Var_\mu(\sqrt{f});
\end{equation}
    \item For all $\lambda\in\mathbb{R}$ and $u\in b\mathcal{B}$
    such that $\mu(u)=0$ and $\delta(u)\le2,$ we have
\begin{equation}\label{eq-01rho}
     \limsup_{t\rightarrow\infty}\frac
    1t\log\mathbb{E} \exp\left(\lambda\int_0^tu(X_s)\,ds\right)\le
    \rho(\lambda)
\end{equation}
where
$\rho(\lambda)=\mathbf{1}_{|\lambda|\le1}\lambda^2+\mathbf{1}_{|\lambda|>1}(2|\lambda|-1).$
\end{enumerate}
    \item The constant 4 in (\ref{thm1a}) is sharp and the equality holds if and only if
\begin{equation}\label{eq-02}
    \mu\circ f^{-1}=p\delta_{\frac{1-p}{p}}+(1-p)\delta_{\frac{p}{1-p}}
\end{equation}
for some $0<p<1.$

    \item The function $\rho$ is the best right-hand side for the
    inequality (\ref{eq-01rho}) and the equality is achieved for some $\lambda\in\mathbb{R}$
    and some $u\in b\mathcal{B}$ such that $\mu(u)=0$ and $\delta(u)\le2,$  if and only if
    there exists $0<p<1$ such that $$\lambda=1-2p:=\lambda(p)$$ and
\begin{equation}\label{eq-09}
    \mu\circ u^{-1}=p\delta_{2-2p}+(1-p)\delta_{-2p}.
\end{equation}
\end{enumerate}
\end{thm}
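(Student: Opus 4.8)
The plan is to prove Theorem \ref{thm3} in the natural order: first establish the equivalence of the two families in part (1), then derive the explicit computations needed for parts (2) and (3), observing that the same extremal probability measure governs both equalities.

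\textbf{Part (1), equivalence of (a) and (b).} The pure jump process at (\ref{eq-10}) is $\mu$-reversible with Dirichlet form $\EE(g,g)=\langle -\LL g,g\rangle_\mu = \int g^2\,d\mu - (\int g\,d\mu)^2 = \Var_\mu(g)$, so the Fisher-Donsker-Varadhan information is exactly $I(f\mu|\mu)=\EE(\sqrt f,\sqrt f)=\Var_\mu(\sqrt f)$. Since also $\|f\mu-\mu\|_{\mathrm{TV}} = 2W_1(f\mu,\mu) = 2T_\Phi(f\mu,\mu)$ for $\Phi=\{(u,u):\delta(u)\le 1\}$ (the trivial metric identity of Remark \ref{rem31}-(i)), inequality (\ref{thm1a}) is precisely the statement $T_\Phi I$ with $\alpha(r)=r^2$. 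Now I would apply Theorem \ref{thm-a2.1}: in this symmetric case (a) $\Leftrightarrow$ $(c')$, and since $\alpha(r)=r^2$ is convex, also (a) $\Leftrightarrow$ $(b')$. The condition $(b')$, written out with $v=u$, $\mu(u)=0$, $\|u\|_{\mathrm{Lip}}=\delta(u)\le 1$, reads $\limsup_t \frac1t\log\EE_\mu\exp(\lambda\int_0^t u(X_s)ds)\le \alpha^\circledast(\lambda)=\lambda^2/4$ for $\lambda\ge 0$. Replacing $u$ by $2u$ (so $\delta(u)\le 2$) and noting symmetry in $\lambda$ turns this into (\ref{eq-01rho}) with $\rho(\lambda)=\lambda^2$ on $|\lambda|\le 1$; the linear piece $2|\lambda|-1$ for $|\lambda|>1$ comes from a separate direct estimate of the Feynman-Kac/Laplace functional for this explicit process (the top of the spectrum of $\LL+\lambda u$ saturates because $u$ is bounded), which I would carry out by hand using the representation $X_t=Y_{N_t}$. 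Conversely, restricting (\ref{eq-01rho}) to $|\lambda|\le 1$ recovers $(b')$ and hence (a). This establishes (1).

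\textbf{Parts (2) and (3), sharpness.} The key computation is to identify $\Lambda(\lambda u)$ exactly for this process. Using $\EE=\Var_\mu$, for $g$ with $\mu(g^2)=1$ one has $\Lambda(\lambda u)=\sup_g\{\lambda\mu(ug^2)-\mu(g^2)+\mu(g)^2\}$. Writing $\nu = g^2\mu$ this is $\sup_\nu\{\lambda\nu(u) - 1 + (\int\sqrt{d\nu/d\mu}\,d\mu)^2\} = \sup_\nu\{\lambda\nu(u)-I(\nu|\mu)\}$. To prove $\rho$ is optimal and to find the extremizers, I would reduce to a one-dimensional problem: by a convexity/extreme-point argument it suffices to consider $u$ taking two values and $\nu$ supported on the corresponding two atoms, i.e. $\mu\circ u^{-1}=p\delta_a+(1-p)\delta_b$ with $pa+(1-p)b=0$ and $|a-b|\le 2$. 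Then $\Var_\mu(\sqrt f)$ and $\|f\mu-\mu\|_{\mathrm{TV}}$ become elementary functions of $p$ and the weights $(s,t)$ of $f$ on the two atoms, and (\ref{thm1a}) becomes a two-variable inequality whose equality case is found by Lagrange multipliers; this yields (\ref{eq-02}) and the sharpness of the constant $4$. For (3), optimizing $\lambda a - (\text{the two-atom Fisher term})$ explicitly over the two-atom family gives $\rho(\lambda)$ and pins down the equality case to $\lambda=1-2p$ with $\mu\circ u^{-1}=p\delta_{2-2p}+(1-p)\delta_{-2p}$ — note this $u^{-1}$-law is exactly $2$ times a centered Bernoulli, consistent with the factor-$2$ rescaling used in Part (1), and the optimal $f$ from (\ref{eq-02}) corresponds to the optimal $\nu$ in the variational formula for $\Lambda$.

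\textbf{Main obstacle.} The routine part is the Dirichlet-form identity and the appeal to Theorem \ref{thm-a2.1}. The genuinely delicate step is the \emph{reduction to two atoms}: showing that among all $u$ with $\mu(u)=0$, $\delta(u)\le 2$ and all probability densities $f$, the worst case is attained at a two-point configuration. I expect to argue this by fixing the value of $\delta(u)$ or the relevant linear functional and exploiting that both $\nu\mapsto I(\nu|\mu)$ is convex while the competing quantity $\nu\mapsto \nu(u)$ (resp. $\nu\mapsto\|\nu-\mu\|_{\mathrm{TV}}$) has its extrema on a low-dimensional face; a clean way is to note that fixing $\mu(u^2)$ and $\mu(u)$, the extremal $u$ for $\Lambda(\lambda u)$ is two-valued because the only nonlinear ingredient, $\int\sqrt{d\nu/d\mu}\,d\mu$, interacts with $u$ only through $\nu(u)$. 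Once that reduction is in hand, everything else is a finite-dimensional calculus exercise, and the simultaneous appearance of the same Bernoulli-type extremal measure in (\ref{eq-02}) and (\ref{eq-09}) falls out automatically.
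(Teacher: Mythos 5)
Your setup for part (1) — identifying $\EE(g,g)=\Var_\mu(g)$, $\|f\mu-\mu\|_{\mathrm{TV}}=2T_\Phi(f\mu,\mu)$, and invoking Theorem~\ref{thm-a2.1} to equate (\ref{thm1a}) with the exponential bound $\lambda^2$ for $|\lambda|\le 1$ — is exactly what the paper does. But you have two genuine gaps. First, the reduction to two-point configurations, which you yourself flag as the delicate step and only sketch a plan for, is proved in the paper (Lemma~\ref{thm1}, Step~1) by a device you don't have: replace $f$ by its conditional expectation $\bar f=\ee^\mu\bigl(f\mid\sigma(\{f<1\})\bigr)$, observe that $\int|1-f|\,d\mu=\int|1-\bar f|\,d\mu$ while $\Var_\mu(\sqrt{\bar f})\le\Var_\mu(\sqrt f)$ by Jensen (equality iff $f=\bar f$). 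This is a reduction on $f$, which is what part~(2) requires; your heuristic about $u$ being two-valued because ``the only nonlinear ingredient interacts with $u$ only through $\nu(u)$'' is an observation about the Feynman--Kac side (part~(3)), and even there it is not an argument — you would need a real extreme-point lemma, which is precisely what the conditional-expectation trick replaces.

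Second, the linear branch $2|\lambda|-1$ of $\rho$ is not obtained in the paper by ``a separate direct estimate of the Feynman--Kac semigroup''; that route naturally yields only $2|\lambda|$ (since $u\le 2$), not $2|\lambda|-1$. The paper instead proves (Lemma~\ref{lem1}(2)) the pointwise differential inequality $\psi''(t)+\psi'(t)=\lambda^2\,\mathbb V^{\lambda u,t}[u(X_t)]\le\lambda^2$ via It\^o, hence the exact envelope $\psi(t)\le\lambda^2(e^{-t}+t-1)$, and then (Lemma~\ref{lem1}(4)) a convexity argument: $R(\lambda):=\sup_u\limsup_t\frac1t\log\EE e^{\lambda\int_0^t u}$ is convex, equals $\lambda^2$ on $(-1,1)$, is $\le 2|\lambda|$ trivially, hence $R=\mathrm{cv}\bigl(\min(\lambda^2,2|\lambda|)\bigr)=\rho$. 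Your proposal has no substitute for either the ODE step (which pins down the variance term exactly and is needed for the equality analysis of part~(3) via the $2\times 2$ eigenvalue computation) or the convex-envelope argument. So while the framework and the intuition about Bernoulli extremizers are on target, the two load-bearing steps — the Jensen-conditioning reduction and the ODE/convex-envelope machinery — are not actually supplied.
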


\proof  The statement of Theorem \ref{thm3} is simply a gathering
of Lemmas \ref{thm1} and \ref{lem1} below. These lemmas provide
two distinct proofs of inequality (\ref{thm1a}).
\endproof

\begin{remarks}\begin{rm} \
\begin{enumerate}[(i)]
    \item Note the symmetry $p\leftrightarrow 1-p$ in $(\ref{eq-02})$ and
the antisymmetry in $(\ref{eq-09}):$ $(\lambda,u)\leftrightarrow
(-\lambda,-u)$ gives $p\leftrightarrow 1-p.$

    \item Let us recall some well-known facts about optimal transportation \cite{Vill03, p-Vill05}. The total variation
$\|\mu-\nu\|_{\mathrm{TV}}$ is the minimal transportation cost
$T_c(\nu,\mu)$ associated with the cost function
$c(x,y)=2\cdot\mathbf{1}_{x\not=y}$ (see Remarks \ref{rem31}) :
\begin{equation*}
    \|\mu-\nu\|_{\mathrm{TV}}
    =2\inf_{\pi\in P(\nu,\mu)} \pi(\{(x,y); x\not=y\})
\end{equation*}
where $P(\nu,\mu)=\{\pi\in M_1(\XX^2):\pi_0=\nu, \pi_1=\mu\}.$ The
infimum is attained on $P(\nu,\mu),$ these minimizers are often
called optimal couplings of $\nu$ and $\mu.$ One has the following
characterization: $\pi\in P(\nu,\mu)$ is optimal if and only if
there exists some measurable function $u$ on $\XX$ such that
\begin{equation}\label{eq-11}
    \pi(\{(x,y)\in\XX^2; u(x)-u(y)=2\cdot\mathbf{1}_{x\not=y}\})=1.
\end{equation}
Such a $u$ is often called an optimal Kantorovich potential.
\\
  Let $0<p<1$ and $f$ satisfy $(\ref{eq-02}).$ Any optimal
    coupling $\pi$ of $f\mu$ and $\mu$ satisfies
    $$\pi(\{(x,y); x\not=y\})=|1-2p|=|\lambda(p)|$$
    and it admits an optimal
    Kantorovich potential $u$ (see (\ref{eq-11})) satisfying
    $(\ref{eq-09}).$ More precisely,
    \begin{enumerate}
        \item $\{u=2-2p\}=\{f=(1-p)/p\}$ if
    $0<p<1/2;$
        \item $\{u=2-2p\}=\{f=p/(1-p)\}$ if $1/2<p<1;$
        \item When $p=1/2,$ (\ref{eq-02}) is equivalent to $f=1$
        $\mu$-a.e., that is $\nu=\mu.$ On the other hand, any $u$ satisfying (\ref{eq-09}) with $p=1/2$ is an optimal
        potential for the trivial optimal coupling $\pi(dxdy)=\mu(dx)\delta_x(dy).$
    \end{enumerate}
    The equalities in (a) and (b) are satisfied up to
    $\mu$-negligible sets.

    \item Inequality (\ref{thm1a}) is already known in statistics. Indeed
for $\nu=f\mu$, the Hellinger distance between $\nu$ and $\mu$ is
given by
$$
d_H^2(\nu, \mu)=\frac 12 \int (1-\sqrt f)^2 d\mu = 1-\mu(\sqrt f).
$$
The known inequality (see Gibbs and Su \cite{GS02}) is
$$
\frac 14\|\nu -\mu\|_{\textrm{TV}}^2\le
d_H^2(\nu,\mu)[2-d_H^2(\nu,\mu)]
$$
and the above right-hand side is exactly
$1-[\mu(\sqrt{f})]^2=\Var_\mu(\sqrt{f})$.
\end{enumerate}
\end{rm}
\end{remarks}

We are going to give two  different new proofs of (\ref{thm1a}).
The first one, at Lemma \ref{thm1}, is elementary and provides a
characterization of these $f$'s which achieve the equality in
(\ref{thm1a}). The second one, at Lemma \ref{lem1}, is in the
spirit of this paper since it is a corollary of Theorem
\ref{thm-a2.1}. It also provides a characterization of the real
parameters $\lambda$ which achieve the equality in the ``dual"
inequality (\ref{eq-01rho}).

\subsubsection*{A first proof of the CKP type inequality (\ref{thm1a}) and more}
It may be seen as an amusing exercise in graduate courses.

\begin{lem}\label{thm1} The inequality (\ref{thm1a}) holds for every $\mu$-probability density
$f.$ Moreover, equality  is achieved in (\ref{thm1a}) if and only
(\ref{eq-02}) is satisfied for some $0<p<1.$
 \end{lem}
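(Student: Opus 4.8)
The plan is to derive (\ref{thm1a}) from a single application of the Cauchy--Schwarz inequality after the substitution $g:=\sqrt f$. First note that $g\in L^2(\mu)$ with $\mu(g^2)=\mu(f)=1$, whence $\mu(g)=\mu(g\cdot 1)\le 1$ by Cauchy--Schwarz; in particular $\Var_\mu(\sqrt f)=1-\mu(g)^2\ge 0$ is always finite. Since $\nu=f\mu$ with $f\ge 0$ and $\mu(f)=1$, the total variation is $\|f\mu-\mu\|_{\textrm{TV}}=\mu(|f-1|)$, and using the factorization $f-1=g^2-1=(g-1)(g+1)$ together with $g+1\ge 0$ this becomes
\[
\|f\mu-\mu\|_{\textrm{TV}}=\mu\big(|g-1|\,(g+1)\big).
\]

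Next I would apply the Cauchy--Schwarz inequality to the two nonnegative functions $|g-1|$ and $g+1$:
\[
\mu\big(|g-1|(g+1)\big)\le \mu\big((g-1)^2\big)^{1/2}\,\mu\big((g+1)^2\big)^{1/2}.
\]
Expanding the squares and using $\mu(g^2)=1$ gives $\mu((g-1)^2)=2(1-\mu(g))$ and $\mu((g+1)^2)=2(1+\mu(g))$, so the product under the square roots equals $4(1-\mu(g)^2)=4\Var_\mu(\sqrt f)$. Squaring the resulting chain of inequalities yields exactly (\ref{thm1a}).

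For the equality statement I would invoke the equality condition in Cauchy--Schwarz: equality in (\ref{thm1a}) holds if and only if $|g-1|$ and $g+1$ are $\mu$-a.e.\ proportional. Since $g+1\ge 1>0$, this means $|g-1|=c\,(g+1)$ $\mu$-a.e.\ for some constant $c\ge 0$. The degenerate case $c=0$ gives $g\equiv 1$, i.e.\ $f\equiv 1$ $\mu$-a.e., which is (\ref{eq-02}) with $p=1/2$ and for which both sides of (\ref{thm1a}) vanish. When $c>0$, solving $|g-1|=c(g+1)$ separately on $\{g\ge 1\}$ and $\{g<1\}$ (the constraint $\mu(g^2)=1$ forcing $0<c<1$ and both sets to carry positive mass) shows that $g$ takes exactly the two mutually reciprocal values $\frac{1+c}{1-c}$ and $\frac{1-c}{1+c}$; hence $f=g^2$ takes two reciprocal values $\alpha>1$ and $1/\alpha$, each with positive $\mu$-probability. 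Writing $p:=\mu(f=1/\alpha)$, the normalization $\mu(f)=1$ forces $p=\alpha/(1+\alpha)$, so $\alpha=p/(1-p)$ and $1/\alpha=(1-p)/p$, which is (\ref{eq-02}). Conversely, if $f$ satisfies (\ref{eq-02}) then $\sqrt f$ takes two reciprocal values, so $|g-1|/(g+1)$ is $\mu$-a.e.\ constant, Cauchy--Schwarz is an equality, and (\ref{thm1a}) is an equality; this gives the ``if'' direction.

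Every step is elementary; the only part requiring a little care is the bookkeeping in the equality analysis, namely translating the proportionality constant $c$ into the parameter $p$ and checking that the resulting two-point law is consistent with the $p\leftrightarrow 1-p$ symmetry of (\ref{eq-02}). I do not anticipate a genuine obstacle.
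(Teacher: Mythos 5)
Your proof is correct, and it is genuinely different from (and shorter than) the one the paper gives. The paper proceeds in two steps: first it replaces $f$ by its conditional expectation $\bar f=\ee^\mu(f\mid\sigma(\{f<1\}))$, checking that this leaves $\mu(|1-f|)$ unchanged while (by Jensen) it can only decrease $\Var_\mu(\sqrt f)$, thereby reducing to the case where $f$ takes two values; it then parametrizes the two-point law trigonometrically ($\sqrt p=\cos\xi$, $a\sqrt p=\cos\theta$, etc.) and runs a one-variable calculus argument on $g(\xi,\theta)=\sin(\theta-\xi)-(\cos^2\xi-\cos^2\theta)$ to locate the critical points and the equality case. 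Your route is a single Cauchy--Schwarz applied to the factorization $|f-1|=|g-1|(g+1)$ with $g=\sqrt f$, together with the identities $\mu((g\mp1)^2)=2(1\mp\mu(g))$; the equality analysis then reads off directly from the proportionality $|g-1|=c(g+1)$. This is essentially the classical Hellinger-vs-total-variation argument, which the paper only mentions in a remark (citing Gibbs--Su) without reproducing. What the paper's longer proof buys is the explicit two-point reduction, which is reused for a geometric description of the extremizers; what yours buys is brevity and a cleaner equality analysis. One minor point worth making explicit in your write-up: from $|g-1|=c(g+1)$ with $c>0$ you should rule out $c\ge1$ (on $\{g\ge1\}$ the equation $g(1-c)=1+c$ has no solution in $[1,\infty)$ when $c\ge1$, and $c=1$ forces $g=0$ a.e., contradicting $\mu(g^2)=1$), so that $0<c<1$ and both level sets $\{g=\tfrac{1-c}{1+c}\}$, $\{g=\tfrac{1+c}{1-c}\}$ carry positive mass; you gesture at this but it deserves a sentence.
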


\bprf[Proof of Lemma \ref{thm1}] Assume that $\mu(f=1)<1$ (trivial
otherwise) in the following. Then $0<\mu(f<1), \mu(f>1)<1.$

\medskip\noindent $\bullet$\ \textsl{Step 1.  Reduction to the two-values case.}
This step might be the most difficult. Let $A=\{f<1\},
B=\{f\ge1\}$ and $\bar f:= \ee^\mu(f|\sigma(A))=\alpha
\mathbf{1}_A+\beta\mathbf{1}_B$ with
$\alpha=\mu(\mathbf{1}_Af)/\mu(A)$ and
$\beta=\mu(\mathbf{1}_Bf)/\mu(B).$ As $\mu(f<1)>0$ and
$\mu(f>1)>0,$ one sees that $0\le\alpha<1<\beta.$ Therefore, $f<1$
if and only if $\bar f<1$ and
$$
\frac 12 \mu(|1-f|)= \int_{f<1} (1-f)\,d\mu =  \int_{f<1} (1-\bar
f)\,d\mu =\int_{\bar f<1} (1-\bar f)\,d\mu= \frac 12 \mu(|1-\bar
f|)
$$
On the other hand, by Jensen's inequality
$$
\Var_{\mu}\left(\sqrt{\bar f}\right) = 1-\left[\mu\left(\sqrt{\bar
f}\right)\right]^2\le 1-[\mu(\sqrt{f})]^2
$$
and the equality holds if and only if $f=\bar f.$ It follows that
for $f$ to satisfy (\ref{thm1a}) it is enough that (\ref{thm1a})
also holds for $\bar f.$ Without loss of generality, we may assume
from now on that $f=\bar f$, i.e.\! there are two numbers $0\le
a<1<b$ such that
$$
p=\mu(f=a^2)\in (0,1), \ q=\mu(f=b^2)\in (0,1), p+q=1.
$$

\medskip\noindent $\bullet$\ \textsl{Step 2.}
Let $\xi\in (0, \pi/2)$ such that $\sqrt{p}=\cos \xi,
\sqrt{q}=\sin \xi$. Since
$$
1=\mu(f) = p a^2 + q b^2
$$
we may choose $\theta\in [0,\pi/2]$ such that
$$
a= \frac 1{\sqrt{p}} \cos \theta, \ b= \frac 1{\sqrt{q}} \sin
\theta.
$$
As $a\in [0,1)$, $\theta>\xi$. Now noting that
$$
\Var_\mu(\sqrt{f})= (b-a)^2 pq, \ \frac 12 \mu(|1-f|)=
\mu(1_{f<1}(1-f))=(1-a^2)p
$$
the inequality (\ref{thm1a}) amounts to saying that
\begin{equation} \aligned (b-a) \sqrt{pq} - (1-a^2)p &=\sin \theta \cos \xi -
\cos \theta \sin \xi - (\cos^2\xi -\cos^2\theta)\\
&=\sin(\theta-\xi)- (\cos^2\xi -\cos^2\theta)=:g(\xi, \theta)\ge
0\endaligned\end{equation} for all $0<\xi<\theta\le \pi/2$. Fix
$\xi$ and put $h(\theta)=g(\xi, \theta)$. We have
$$
h'(\theta)=\cos(\theta-\xi) - \sin (2\theta)= \sin\left(\frac \pi
2 - (\theta-\xi)\right)-\sin(2\theta)
$$
and then $h'(\theta)=0$ if and only if $\dsp \frac \pi 2 -
(\theta-\xi)=2\theta$ (if $2\theta\le \pi/2$) or $\dsp \frac \pi 2
- (\theta-\xi)=\pi-2\theta$ (if $2\theta\ge \pi/2$), i.e.\! $\dsp
\theta=\theta_1=\frac \pi 6 + \frac \xi 3$ or $\dsp
\theta=\theta_2=\frac \pi 2 - \xi$.

\begin{description}
\item[Case 1. $\xi\in [\pi/4,\pi/2)$]
    Since $\theta_1, \theta_2\le \xi$ and $h'(\pi/2)>0$, we
have $h'(\theta)>0$ for all $\theta\in (\xi, \pi/2]$. Hence for
all $\theta\in (\xi, \pi/2]$, $ g(\xi, \theta)=h(\theta)>
h(\xi)=0. $

\item[Case 2. $\xi\in (0, \pi/4)$] In this case
$\xi<\theta_1<\theta_2<\pi/2$. Since $ h'(\xi)>0, h'(\pi/2)>0, \
h(\xi)=h(\theta_2)=0, $  $h'$ is positive, negative and positive
respectively on $(\xi, \theta_1)$, $(\theta_1, \theta_2)$ and
$(\theta_2,\pi/2)$. Consequently for all $\theta\in (\xi, \pi/2],$
$ g(\xi, \theta)=h(\theta)\ge 0 $ and the equality holds if and
only if $\dsp \theta=\theta_2=\frac \pi 2 -\xi$.
\end{description}

\medskip\noindent $\bullet$\ \textsl{Step 3. Equality in (\ref{thm1a})}. If $f=1,
\mu$-a.s., then the equality in (\ref{thm1a}) holds. Now, assume
$\mu(f=1)<1$ and the equality in (\ref{thm1a}). By Step 1, $f=\bar
f$, i.e.\!  $f$ takes only two values $\sqrt{a}<1<\sqrt{b}$. By
Step 2, this is possible if and only if $\xi<\pi/4$ and $\theta=
\pi/2 -\xi$, i.e.\! $ p=\cos^2 \xi>\frac 12,$ $\cos^2\theta +
\cos^2 \xi= a^2 p + p=1.$ That is $p(1+a^2)=1.$ Therefore, either
there exist two numbers $0<a<1<b$ such that $\mu(f\in\{a^2,b^2\})=
1$ and
\begin{equation*}
    \left\{\begin{array}{lcl}
      pa^2+(1-p)b^2&=&1  \\
      p(1+a^2)&=&1 \\
    \end{array}\right.
\end{equation*}
or $f=1,\mu$-a.e. This proves the desired parametrization
(\ref{eq-02}) for $1/2\le p<1,$ and hence for all $0<p<1$ because
of the symmetry in (\ref{eq-02}). \nprf

\bexa {\rm (Bernoulli distribution). Let $\mu$ be the Bernoulli
distribution on $\XX=\{0,1\}$ with $\mu(\{1\})=p\in (0,1)$.
Consider the Dirichlet form $\EE(g,g)=(g(1)-g(0))^2$. Then
$\SG(\mu)=pq$. By Theorem \ref{thm2}-(a) and Remark
\ref{rem31}-(i), we see that
$$
pq W_1(\nu,\mu)^2\le I(\nu|\mu)
$$
where $W_1$ is built with the trivial metric. The constant $pq$ is
sharp. However $\mu$ does not satisfy any $W_2I(\kappa)$ as is
easily seen with $\nu=\mu_\vep=(1+\vep g)\mu$. } \nexa

\subsubsection*{A second proof of the CKP type inequality (\ref{thm1a}) and more}
Recall that $((X_t)_{t\ge0}, \mathbb{P} )$ is the pure jump Markov
process defined at (\ref{eq-10}).

\begin{lem}\label{lem1}\
\begin{enumerate}
    \item  The inequality (\ref{thm1a}) holds for all
probability density $f$ if and only if for all
$\lambda\in\mathbb{R}$ and
 $u\in b\mathcal{B}$ such that $\mu(u)=0$ and $\delta(u)\le2,$
we have
\begin{equation}\label{eq-01}
    \limsup_{t\rightarrow\infty}\frac
    1t\log\mathbb{E} \exp\left(\lambda\int_0^tu(X_s)\,ds\right)\le
    \lambda^2.
\end{equation}
    \item  For all $\lambda\in \mathbb{R}$
    and $u\in b\mathcal{B}$ such that $\mu(u)=0$ and $\delta(u)\le2,$
        the inequality (\ref{eq-01}) holds true.

    \item The equality is achieved in  (\ref{eq-01}) for some $\lambda\in\mathbb{R}$
    and some $u\in b\mathcal{B}$ such that $\mu(u)=0$ and $\delta(u)\le2,$  if and only if
     $\lambda=1-2p$ and (\ref{eq-09}) hold
for some $0<p<1.$

    \item The function $\rho$ is sharp in
    inequality (\ref{eq-01rho}), that is $$\rho(\lambda)=\sup_u\left\{\limsup_{t\rightarrow\infty}\frac
    1t\log\mathbb{E}
    \exp\left(\lambda\int_0^tu(X_s)\,ds\right)\right\}$$ for all real $\lambda,$ where the
    supremum is taken over all $u\in b\mathcal{B}$ such that $\mu(u)=0$ and $\delta(u)\le2.$

    \item For all $\lambda\in \mathbb{R}$
    and $u\in b\mathcal{B}$ such that $\mu(u)=0$ and $\delta(u)\le2,$
    the inequalities (\ref{thm1a}), (\ref{eq-01rho}) and (\ref{eq-01})
are equivalent. The equality in (\ref{eq-01rho}) is never achieved
whenever $|\lambda|\ge1.$
\end{enumerate}
\end{lem}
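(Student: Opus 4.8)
The plan is to read Parts~(1) and (2) directly off Theorem~\ref{thm-a2.1}, and to handle Parts~(3)--(5) through the variational formula for $\Lambda$. First, observe that the process of (\ref{eq-10}) is $\mu$-symmetric and ergodic, with $P_tg=e^{-t}g+(1-e^{-t})\mu(g)$ and symmetrized Dirichlet form $\EE(g,g)=\langle-\mathcal Lg,g\rangle_\mu=\Var_\mu(g)$, so $I(\nu|\mu)=\Var_\mu(\sqrt f)$ for $\nu=f\mu$; taking the cost $c(x,y)=2\cdot\mathbf 1_{x\ne y}$ and $\Phi=\{(u,u):u\in b\BB,\ \delta(u)\le2\}$ gives $T_\Phi(\nu,\mu)=\|\nu-\mu\|_{\mathrm{TV}}$, and $\alpha(r)=r^2/4$ is convex with $\alpha^\circledast(\lambda)=\lambda^2$ for $\lambda\ge0$; in this dictionary (\ref{thm1a}) is exactly the inequality $T_\Phi I$. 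Since $\alpha$ is convex and $(P_t)$ symmetric, Theorem~\ref{thm-a2.1} yields $(a)\Leftrightarrow(b')$, which after replacing $u$ by $u-\mu(u)$ (shifting $\int_0^tu\,ds$ only by the deterministic $t\mu(u)$) and applying the result to $-u$ for $\lambda<0$ is exactly the asserted equivalence (\ref{thm1a})$\Leftrightarrow$(\ref{eq-01}): this is Part~(1). Part~(2) then follows at once, (\ref{thm1a}) being true by Lemma~\ref{thm1}.

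For Parts~(4)--(5) I would start from the fact that, in the symmetric case, $\tfrac1t\log\ee_\mu\exp(\lambda\int_0^tu\,ds)\to\Lambda(\lambda u)$ (upper bound from the equality case of (\ref{lem21a}) in Lemma~\ref{lem21}; lower bound from the large deviation lower bound of \cite{Wu00b}, or from the positivity of the overlap of $\mathbf 1$ with the ground state of $\mathcal L+\lambda u$), and that $\Lambda(\lambda u)=\sup_\nu\{\lambda\,\nu(u)-I(\nu|\mu)\}$ as in the proof of Theorem~\ref{thm-a2.1}. When $\mu(u)=0$ and $\delta(u)\le2$ one has $|\nu(u)|\le\|\nu-\mu\|_{\mathrm{TV}}$; by (\ref{thm1a}), $\|\nu-\mu\|_{\mathrm{TV}}\le2\sqrt{I(\nu|\mu)}$; and $\|\nu-\mu\|_{\mathrm{TV}}<2$ whenever $I(\nu|\mu)<+\infty$ (else $\nu\perp\mu$). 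So $\Lambda(\lambda u)\le\sup_{0\le T<2}\{|\lambda|T-T^2/4\}=\rho(\lambda)$ — maximized at the interior point $T=2|\lambda|$ (value $\lambda^2$) when $|\lambda|<1$, and only approached as $T\to2^-$ (value $2|\lambda|-1$) when $|\lambda|\ge1$ — which is (\ref{eq-01rho}). For the sharpness in Part~(4) I would interchange the suprema, $\sup_u\Lambda(\lambda u)=\sup_\nu\{|\lambda|\,\|\nu-\mu\|_{\mathrm{TV}}-I(\nu|\mu)\}$, and feed in the extremal densities (\ref{eq-02}), for which (\ref{thm1a}) is an equality with $I(\nu|\mu)=(1-2p)^2$ and $\|\nu-\mu\|_{\mathrm{TV}}=2|1-2p|$, letting $p\to0$ (so $\|\nu-\mu\|_{\mathrm{TV}}\to2^-$) in the range $|\lambda|\ge1$; this recovers $\rho(\lambda)$. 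The equivalence of the families (\ref{thm1a}), (\ref{eq-01}), (\ref{eq-01rho}) in Part~(5) I would get by biconjugation: (\ref{eq-01}) for all admissible $u$ says $I_u(a)\ge a^2/4$ on $[-2,2]$, which (with $I_u\equiv+\infty$ off $[-2,2]$, automatic since $|\nu(u)|\le2$) is equivalent to $\Lambda(\lambda u)=\sup_a\{\lambda a-I_u(a)\}\le\rho(\lambda)$ for all $\lambda$, i.e.\ (\ref{eq-01rho}); the converse is clear since $\rho(\lambda)\le\lambda^2$.

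For the equality cases I would note first that, for any admissible $u$, the $\mu$-essential infimum $m$ and supremum $M$ of $u$ satisfy $m\le0\le M$ and $M-m\le\delta(u)\le2$, which forces $M<2$ (otherwise $m\ge0$, so $u=0$ $\mu$-a.e.\ and $M=0$) and likewise $m>-2$; hence $a=\nu(u)$ ranges over a compact $[m,M]\subset(-2,2)$, so $\Lambda(\lambda u)=\max_{a\in[m,M]}\{\lambda a-I_u(a)\}\le\max_{a\in[m,M]}\{\lambda a-a^2/4\}<2|\lambda|-1=\rho(\lambda)$ when $|\lambda|\ge1$. This proves the last assertion of Part~(5) (equality in (\ref{eq-01rho}) is never achieved for $|\lambda|\ge1$), and with $\rho(\lambda)\le\lambda^2$ it also rules out equality in (\ref{eq-01}) there. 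For $|\lambda|<1$ ($\lambda=0$ being trivial), exchanging $(u,\lambda)$ with $(-u,-\lambda)$ if need be, I may assume $0<\lambda<1$. The ``if'' direction is a computation: when $u$ has law (\ref{eq-09}) and $\lambda=1-2p$, the density $f^*=\tfrac{1-p}{p}\mathbf 1_{\{u=2-2p\}}+\tfrac{p}{1-p}\mathbf 1_{\{u=-2p\}}$ has law (\ref{eq-02}), $\nu^*:=f^*\mu$ satisfies $\nu^*(u)=2(1-2p)$, so $\lambda\,\nu^*(u)-I(\nu^*|\mu)=2(1-2p)^2-(1-2p)^2=\lambda^2$ and therefore $\Lambda(\lambda u)=\lambda^2$, i.e.\ equality in (\ref{eq-01}). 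For the ``only if'' direction, the supremum defining $\Lambda(\lambda u)$ is attained at some $\nu^*$ (goodness of $I(\cdot|\mu)$ and boundedness of $u$), and the chain $\lambda^2=\lambda\,\nu^*(u)-I(\nu^*|\mu)\le2\lambda\sqrt{I(\nu^*|\mu)}-I(\nu^*|\mu)\le\lambda^2$ forces $I(\nu^*|\mu)=\lambda^2$, equality in (\ref{thm1a}) for $\nu^*$, and $\nu^*(u)=\|\nu^*-\mu\|_{\mathrm{TV}}$; by Lemma~\ref{thm1}, $d\nu^*/d\mu$ is then of type (\ref{eq-02}) with $\lambda=1-2p$, and $\nu^*(u)=\|\nu^*-\mu\|_{\mathrm{TV}}$ forces $u$ to be an optimal Kantorovich potential for $(\nu^*,\mu)$ with cost $2\cdot\mathbf 1_{x\ne y}$, which by the discussion in the Remarks following Theorem~\ref{thm3} has law (\ref{eq-09}) up to $\mu$-negligible sets.

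The hard part will be the ``only if'' direction of Part~(3): making the attainment of the supremum rigorous, extracting the three equality conditions from the chained inequalities, and then quoting the classification of optimal Kantorovich potentials to determine the law of $u$ (not merely its values on the support of the optimal coupling). A second point needing care is the identity $\rho(\lambda)=\sup_u\{\cdots\}$ of Part~(4), which uses that the extremal densities (\ref{eq-02}) and potentials (\ref{eq-09}) are realizable under $\mu$ — true for non-atomic $\mu$, and in general best read as the statement that $\rho$ is the optimal right-hand side of (\ref{eq-01rho}) uniformly over all admissible $(\XX,\mu)$.
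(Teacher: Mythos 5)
Your route is genuinely different from the paper's. The paper proves Part~(2) directly via a Gronwall-type argument on $\psi(t)=\log\ee\exp(\lambda\int_0^t u(X_s)\,ds)$, showing $\psi''+\psi'=\lambda^2\mathbb V^{\lambda u,t}[u(X_t)]\le\lambda^2$; it then gets Part~(3) by tracing through exactly when $\mathbb V^{\lambda u,t}[u(X_t)]\to1$, which forces $u$ to be two-valued, and finally pins down $\lambda=1-2p$ by an explicit $2\times2$ Feynman--Kac computation on $\{+,-\}$. You instead route everything through the variational formula $\Lambda(\lambda u)=\sup_\nu\{\lambda\nu(u)-I(\nu|\mu)\}$ and convex duality. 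Two side effects are worth noting. First, by getting Part~(2) from Lemma~\ref{thm1} plus Part~(1), you collapse what the paper presents as a \emph{second, independent} proof of (\ref{thm1a}) back onto the first one; this is logically admissible for the lemma's literal statement, but you have not produced the direct proof of (\ref{eq-01}) that the paper's Part~(2) contains, and which its Part~(3) then exploits. Second, your biconjugation proof of (\ref{eq-01})$\Rightarrow$(\ref{eq-01rho}) in Part~(5) (via $I_u\ge\alpha$ on $[-2,2]$ and $\sup_{|a|<2}\{\lambda a-a^2/4\}=\rho(\lambda)$) is actually cleaner than the paper's argument, which passes through convexity of $R$ established by a sub-additivity claim.

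There is, however, a genuine gap in your ``only if'' direction of Part~(3): you justify attainment of the supremum defining $\Lambda(\lambda u)$ by ``goodness of $I(\cdot|\mu)$'', and that is simply false here. For this jump process, $I(\nu|\mu)=\Var_\mu(\sqrt{d\nu/d\mu})\le1$ whenever $\nu\ll\mu$, so the sublevel sets $\{I\le c\}$ with $c\ge1$ contain all of $\{\nu\ll\mu\}$ and are not compact (unless $\XX$ itself is compact); $I(\cdot|\mu)$ is not a good rate function. The conclusion you want is still true, but the correct reason is spectral: $\LL+\lambda u=(\text{rank-one})-\mathrm{Id}+\lambda u$, so its essential spectrum sits in $[-1-2|\lambda|,\,-1+2|\lambda|]$, and in the equality case $\Lambda(\lambda u)=\lambda^2>2|\lambda|-1$ for $|\lambda|<1$, so the top of the spectrum is a discrete eigenvalue and the supremum is attained at $\nu^*=(g^*)^2\mu$ for the corresponding eigenfunction $g^*$. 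Without some such argument, the chain $\lambda^2=\lambda\nu^*(u)-I(\nu^*|\mu)\le\cdots$ never gets started. A related caveat, which you do flag, is that your Part~(4) needs the extremal $u$'s of (\ref{eq-09}) to be realizable for all $p\in(0,1)$, i.e.\ $\mu$ atomless in general; the paper's Part~(4) has the same implicit hypothesis (it is false if $\mu$ is a Dirac mass, where the only admissible $u$ is $0$).

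Finally, the last step of your Part~(3) --- passing from equality $\nu^*(u)=\|\nu^*-\mu\|_{\mathrm{TV}}$ to the law (\ref{eq-09}) of $u$ --- does go through: with $|u-c|\le1$, equality in $\int(u-c)\,d(\nu^*-\mu)\le\|\nu^*-\mu\|_{\mathrm{TV}}$ forces $u=c+1$ on $\{d\nu^*/d\mu>1\}$ and $u=c-1$ on $\{d\nu^*/d\mu<1\}$, and by Lemma~\ref{thm1} these two sets partition $\XX$ up to $\mu$-null sets, whence $\mu(u)=0$ fixes $c=1-2p$. So your argument is salvageable, but as written the crucial attainment step rests on a false premise.
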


\begin{proof}
\noindent$\bullet$\ Statement (1): Clearly, $\mu$ is a reversing
measure for the process $X$ and the associated Dirichlet form is
\begin{equation}\label{eq-12}
    \mathcal{E}(g,g)=\Var_\mu(g),\quad g\in
    \mathbb{D}_2(\mathcal{L})=L^2(\mu).
\end{equation}
Therefore, statement (1) is a direct consequence of Theorem
\ref{thm-a2.1}-$(b')$ applied with $\Phi=\Phi_c,$
$c(x,y)=2.\mathbf{1}_{x\not=y}$  and $\alpha(a)=a^2/4.$ Note that
one passes from $\lambda\ge0$ to $\lambda\in\mathbb{R},$ by
considering $-u$ instead of $u.$ This is possible since
$\delta(-u)=\delta(u)\le2$ and $\mu(-u)=\mu(u)=0.$

\par\medskip\noindent$\bullet$\ Statement (2):
Let us introduce the notations $\psi(t)=\log\varphi(t)$ with
$\varphi(t)=\mathbb{E} \exp\left( \int_0^tv(X_s)\,ds\right)$ and
$v=\lambda u.$ We wish to get an upper bound for $\psi(t).$ For
all $t\ge0,$
$$
\begin{array}{ccccc}
  \psi'(t) & = & \frac{\varphi'}{\varphi}(t) & = & \mathbb{E}^{v,t}[v(X_t)] \\
  \psi''(t) & = & \left[\frac{\varphi''}{\varphi}-\left(\frac{\varphi'}{\varphi}\right)^2\right](t)
    &  &  \\
\end{array}
$$
where $\mathbb{E}^{v,t}$ is the expectation with respect to
$$\mathbb{P}^{v,t}=\exp\left(\int_0^tv(X_s)-\psi(t)\right)\cdot\mathbb{P}
.$$ In order to compute $\psi''(t),$ let us apply It\^o's formula
to $Y_t=v(X_t)\exp\int_0^tv(X_s)\,ds.$ This gives
\begin{equation*}
    dY_t=\exp\left(\int_0^tv(X_s)\,ds \right)\bigg[(\mathcal{L}v(X_t)+v(X_t)^2)\,dt + dM_t\bigg]
\end{equation*} where $M$ is some martingale. It follows that
\begin{eqnarray*}
  \varphi'(t)&=& \mathbb{E}Y_t\\
  &=& \mathbb{E} \left(v(X_0)+\int_0^t dY_s\right) \\
  &=& \mathbb{E}
v(X_0)+\int_0^t\mathbb{E}\left(\exp(\int_0^sv(X_r)\,dr)
[\mathcal{L}v(X_s)+v(X_s)^2]\right)\,ds
\end{eqnarray*}
Hence $\varphi''(t)=\mathbb{E}\left(\exp(\int_0^tv(X_s)\,ds)
[\mathcal{L}v(X_t)+v(X_t)^2]\right)$ and we obtain
\begin{equation*}
    \psi''(t)-\mathbb{E}^{\lambda u,t}\Big(\mathcal{L}[\lambda u](X_t)\Big)=\lambda^2\mathbb{V}^{\lambda u,t}[u(X_t)]
\end{equation*}
where $\mathbb{V}^{\lambda u,t}$ is the variance with respect to
$\mathbb{P}^{\lambda u,t}.$\\ Since $\mu(u)=0,$
$\mathcal{L}u(x)=\mu(u)-u(x)=-u(x),$  one sees that
\begin{equation}\label{eq-03}
    \psi''(t)+\psi'(t)=\lambda^2\mathbb{V}^{\lambda u,t}[u(X_t)],\quad t\ge0.
\end{equation}
As it is assumed that $\delta(u)\le2,$ we have
$\mathbb{V}^{\lambda u,t}[u(X_t)]\le1$ and
\begin{equation}\label{eq-04}
    \psi''(t)+\psi'(t)\le\lambda^2,\quad t\ge0.
\end{equation}
Clearly, $\psi(0)=0$ and $\psi'(0)=\mathbb{E}v(X_0)=\mu(v)=0.$ Let
$\theta$ be the solution of
\begin{equation}\label{eq-06}
    \theta''(t)+\theta'(t)=\lambda^2,\quad t\ge0
\end{equation}
with the same initial conditions as $\psi:$
$\theta(0)=\theta'(0)=0.$ Denote $\gamma(t)=\theta(t)-\psi(t).$ As
$\gamma''(t)+\gamma'(t)\ge0,$ for all $t\ge0,$ we see that
$\gamma'(t)+\gamma(t)\ge\gamma'(0)+\gamma(0)=0.$ By Gronwall's
inequality, it follows that $\gamma(t)\ge0$ for all $t\ge0.$ This
means that
\begin{equation*}
    \psi(t)\le \lambda^2 (e^{-t}+t-1),\quad \forall t\ge0
\end{equation*}
where the right hand side is $\theta(t)$ which is obtained by
elementary differential calculus. The upper bound (\ref{eq-01})
follows immediately.

\par\medskip\noindent$\bullet$\ Statement (3):
Now, let's investigate the equality in (\ref{eq-01}). Inspecting
the proof of statement (2), one sees that this equality holds if
and only if the equality holds in (\ref{eq-04}) asymptotically as
$t$ tends to infinity, i.e.
\begin{equation}\label{eq-07}
  \lim_{t\rightarrow \infty}  \mathbb{V}^{\lambda u,t}[u(X_t)]=1
\end{equation}
To see this, remark that the solution to (\ref{eq-06}) with
general initial conditions $\theta(0)=\theta_o,$
$\theta'(0)=\theta'_o$ is
$\theta(t)=\lambda^2(e^{-t}+t-1)+\theta_o+\theta'_o(1-e^{-t}),$
$t\ge0$ so that $\lim_{t\rightarrow\infty}\theta(t)/t=\lambda^2,$
for any initial conditions. Using Gronwall's inequality as above,
one sees now with (\ref{eq-03}) that the desired equality holds if
and only if $\liminf_{t\rightarrow \infty} \mathbb{V}^{\lambda
u,t}[u(X_t)]=1.$ But this is equivalent to (\ref{eq-07}) since
$\mathbb{V}^{\lambda u,t}[u(X_t)]\le1$ as $\delta(u)\le2.$
\\
It is an easy exercice to show that for any random variable $Z$
such that $-1\le Z\le1$ almost surely, we have $\Var(Z)\le1$ and
$\Var(Z)=1$ if and only if the law of $Z$ is $\frac 12
(\delta_{-1}+\delta_{+1}).$ It follows that (\ref{eq-07}) holds if
and only if $u$ only takes two values $a+1$ and $a-1$ where $a$ is
some real such that $\mu(u)=0$ and
\begin{equation}\label{eq-08}
 \lim_{t\rightarrow\infty}\mathbb{P}^{\lambda
u,t}(u(X_t)=a+1)=\lim_{t\rightarrow\infty}\mathbb{P}^{\lambda
u,t}(u(X_t)=a-1)=1/2.
\end{equation}
One immediately sees that with $p:=\mu(u=a+1),$ $\mu(u)=0$ implies
that $a=a(p)=1-2p.$ Therefore, the image law $\mu\circ u^{-1}$ of
$\mu$ by $u$ must satisfy (\ref{eq-09}) for some $0<p<1.$
\\
Fix $0<p<1$ and take $u$ as in (\ref{eq-09}). Let us introduce the
$\{+,-\}$-valued process defined by $Z_t^{(p)}=u(X_t)-a(p),$
$t\ge0$  and denote $\chi$ the identity on $\{+,-\}:$
$\chi(+)=+1,$ $\chi(-)=-1.$ Rewriting (\ref{eq-08}) as a ratio,
one obtains
\begin{eqnarray*}
 1&=&
    \lim_{t\rightarrow\infty}\frac
{\mathbb{E}\left[\mathbf{1}_{\{u(X_t)=a(p)+1\}}\left\{\exp\left(\lambda\int_0^tu(X_s)\,ds\right)-\psi(t)\right\}\right]}
{\mathbb{E}\left[\mathbf{1}_{\{u(X_t)=a(p)-1\}}\left\{\exp\left(\lambda\int_0^tu(X_s)\,ds\right)-\psi(t)\right\}\right]}\\
 &=&  \lim_{t\rightarrow\infty}\frac
 {\mathbb{E}\left[\mathbf{1}_+(Z_t^{(p)})\exp\left(\lambda\int_0^t\chi(Z_s^{(p)})\,ds\right)\right]}
 {\mathbb{E}\left[\mathbf{1}_-(Z_t^{(p)})\exp\left(\lambda\int_0^t\chi(Z_s^{(p)})\,ds\right)\right]}\\
\end{eqnarray*}
The process $Z^{(p)}$ is still  Markov and its generator is given
for all $\sigma\in\{+,-\}$ and $g\in\mathbb{R}^{\{+,-\}}$ by
\begin{equation*}
    A_pg(\sigma)=m_p(g)-g(\sigma)
\end{equation*}
where $m_p=p\delta_++(1-p)\delta_-$ is the image of $\mu$ by
$x\in\XX\mapsto u(x)-a(p).$
\\
The Feynman-Kac semigroup associated with $Z^{(p)}$ and the
potential $\lambda\chi$ is defined for all $\sigma\in\{+,-\}$ and
$g\in\mathbb{R}^{\{+,-\}}$ by
\begin{equation*}
    P_t^\lambda
g(\sigma):=\mathbb{E}\left[g(Z_t^{(p)})\exp\left(\lambda\int_0^t\chi(Z_s^{(p)})\,ds\right)\Big
| Z_0^{(p)}=\sigma\right].
\end{equation*}
This allows to rewrite
\begin{eqnarray*}
    \frac
    {\mathbb{E}\left[\mathbf{1}_+(Z_t^{(p)})\exp\left(\lambda\int_0^t\chi(Z_s^{(p)})\,ds\right)\right]}
 {\mathbb{E}\left[\mathbf{1}_-(Z_t^{(p)})\exp\left(\lambda\int_0^t\chi(Z_s^{(p)})\,ds\right)\right]}
    &=&\frac
    {\langle m_p,P_t^\lambda \mathbf{1}_+\rangle}
    {\langle m_p,P_t^\lambda\mathbf{1}_-\rangle}\\
    &=&\frac
    {\langle m_p,\exp[t(A_p+\lambda\chi)] \mathbf{1}_+\rangle}
    {\langle m_p,\exp[t(A_p+\lambda\chi)]\mathbf{1}_-\rangle}\\
\end{eqnarray*}
which can be computed explicitly by means of elementary linear
algebra in $\mathbb{R}^2.$ Indeed, seeing the functions as
vectors:
 $g=\begin{pmatrix} g(+) \\ g(-)\end{pmatrix}
 = \begin{pmatrix}x\\y\end{pmatrix}
 =x\mathbf{1}_++y\mathbf{1}_-
 $
where $(\mathbf{1}_+,\mathbf{1}_-)$ is the canonical base of
$\mathbb{R}^{\{+,-\}},$ one
immediately identifies the operator $A_p+\lambda\chi$ with the matrix $M=\left(%
\begin{array}{cc}
  -1+p+\lambda & 1-p \\
  p & -p-\lambda \\
\end{array}%
\right).$ It is a simple exercice to show that $M$ has two real
distinct eigenvalues $s_1>s_2$ given by $s_1=-1/2
+\sqrt{\Delta}/2$ and $s_2=-1/2-\sqrt{\Delta}/2$ with
$\Delta=1+4\lambda(\lambda+2p-1)$ and $\Delta>0$ since $0<p<1.$
One can check that $\mathbf{1}_+=(v_1-v_2)/\sqrt{\Delta}$ and
$\mathbf{1}_-=(-\frac{p+\lambda+s_2}{p}v_1+\frac{p+\lambda+s_1}{p}v_2)/\sqrt{\Delta}$
where $v_1=\begin{pmatrix} p+\lambda+s_1 \\ p\end{pmatrix}$ and
$v_2=\begin{pmatrix} p+\lambda+s_2 \\ p\end{pmatrix}$ are
eigenvectors associated respectively with $s_1$ and $s_2.$ Using
the elementary remark that $s_1>s_2$ implies that
$\lim_{t\rightarrow\infty} e^{-ts_1}e^{tM}(av_1+bv_2)=av_1$ for
all $a,b\in\mathbb{R},$ one obtains
\begin{equation*}
    \lim_{t\rightarrow\infty}\frac
    {\langle m_p,P_t^\lambda \mathbf{1}_+\rangle}
    {\langle m_p,P_t^\lambda\mathbf{1}_-\rangle}
    = -\frac{p}{p+\lambda+s_2}
\end{equation*}
for all $0<p<1$ and $\lambda\in\mathbb{R}.$ Putting everything
together, we conclude that the equality in (\ref{eq-01}) holds if
and only if $-p/(p+\lambda+s_2)=1$ which in turn is also
equivalent to $\lambda=1-2p.$

\par\medskip\noindent$\bullet$\ Statement (4):
Let us denote
$$R(\lambda):=\sup_u\left\{\limsup_{t\rightarrow\infty}\frac
    1t\log\mathbb{E}
    \exp\left(\lambda\int_0^tu(X_s)\,ds\right)\right\},\ \lambda\in \mathbb{R}$$
where the supremum is taken over all $u\in b\mathcal{B}$ such that
$\mu(u)=0$ and $\delta(u)\le2.$ We have to show that $R=\rho.$
\\
Let us first prove that $R$ is a convex function.  As a
log-Laplace transform, for each $u$ and $t,$ $\log\mathbb{E}
\exp\left(\lambda\int_0^tu(X_s)\,ds\right)$ is a convex function
of $\lambda.$ Since $X$ has stationary independent increments, by
a standard sub-additivity argument, one shows that
$\limsup_{t\rightarrow\infty}$ is a genuine limit:
$\lim_{t\rightarrow\infty}.$ It follows that for each $u,$
$\limsup_{t\rightarrow\infty}\frac
    1t\log\mathbb{E}
    \exp\left(\lambda\int_0^tu(X_s)\,ds\right)$
is a convex function of $\lambda.$ Finally, $R$ is convex as it is
the supremum of convex functions.
\\
Because of statements (2) and (3), it is already seen that
$R(\lambda)=\rho(\lambda)=\lambda^2$ for all $-1<\lambda<1.$ On
the other hand, since $\sup|u|\le2$ for each $u$ such that
$\mu(u)=0$ and $\delta(u)\le2,$ it is clear that
$\limsup_{t\rightarrow\infty}\frac 1t\log\mathbb{E}
\exp\left(\lambda\int_0^tu(X_s)\,ds\right) \le 2|\lambda|$ for all
real $\lambda.$ Therefore, for all $\lambda,$ $\rho(\lambda)\le
R(\lambda)\le \mathrm{cv\,} r(\lambda)$ where $\mathrm{cv\,} r$ is
the convex envelope of $r(\lambda)= \min(\lambda^2,2|\lambda|).$
Indeed, the first inequality holds since $\rho$ is the lowest
convex function which matches with $\lambda^2$ on $\lambda\in
(-1,1)$ while the second one follows from the inequality
$R(\lambda)\le \min(\lambda^2,2|\lambda|)$ for all $\lambda$ and
the convexity of $R.$ One concludes that $R=\rho,$ remarking that
$\mathrm{cv\,} r=\rho.$

\par\medskip\noindent$\bullet$\ Statement (5) is a direct
consequence of statement (1) and the proof of statement (4).
\end{proof}

Note that $W_1I$ for the trivial metric implies $W_1I$ for any
bounded metric. So our next purpose is to obtain  $W_1I$ for
unbounded metrics. Our study is naturally separated into two
sections. Next Section 4 is concerned with estimating sharply
$\WI$ under strong dissipative conditions. In Section 5, Lyapunov
function conditions for $W_1I$ or more general $T_\Phi I$ are
taken into consideration.


\section{Spectral gap in the space of Lipschitz functions implies $W_1I$
for diffusion processes}

\subsection{General observations}
We begin with the particular case where $\mu(dx)= e^{-V(x)}dx/Z$
($Z$ is the normalization constant) with $V\in C^2(\XX)$ on a
connected and complete Riemannian manifold $\XX$, the diffusion
$(X_t)$ generated by $\LL =\Delta -\nabla V\cdot \nabla$
($\Delta,\nabla$ are respectively the Laplacian and gradient on
$M$) is reversible with respect to $\mu$, and the corresponding
Dirichlet form is given by
$$
\EE(h,h)=\int_\XX |\nabla h|^2 \,d\mu, \ \forall h\in
\dd(\EE)=H^1(\XX, \mu).
$$

\begin{thm} \label{thm41} Assume that $\int_\XX d^2(x,x_0)\,d\mu(x)<+\infty$
and $\LL$ has a spectral gap on the space $C_{\mathrm{Lip}}(\XX)$
of Lipschitz functions with respect to the Riemannian metric $d$,
i.e.\!  there is a best finite constant $C>0$ such that for any
$g\in C_{\mathrm{Lip}}\bigcap b\BB$ with $\mu(g)=0$, there is
$h\in \dd_2(\LL)$ with $\mu(h)=0$ solving the Poisson equation
$$
-\LL h = -\Delta h + \nabla V\cdot \nabla h =g, \
\mu\textrm{-a.s.}
$$
such that one $\mu$-version $\tilde h$ of $h$ verifies
\begin{equation}\label{thm41a1} \|\tilde h\|_{\mathrm{Lip}}\le
C\|g\|_{\mathrm{Lip}}. \end{equation} Then $\mu$ satisfies
$W_1I(C)$:
\begin{equation}\label{thm41a} W_1(\nu,\mu)^2\le 4C^2\, I(\nu|\mu),\
\forall \nu\in M_1(\XX) \end{equation} or equivalently for any
Lipschitz function $u$ on $\XX$ and any initial probability
measure $\beta$ with $d\beta/d\mu\in L^2(\mu)$
\begin{equation}\label{thm41b} \pp_\beta\left(\frac 1t\int_0^t u(X_s)\,ds\ge
\mu(u)+r\right) \le
\left\|\frac{d\beta}{d\mu}\right\|_2\exp\left(-t\frac{
r^2}{4C^2\|u\|_{\mathrm{Lip}}^2}\right),\ \forall r,t>0.
\end{equation}
 \end{thm}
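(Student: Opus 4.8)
The plan is to verify the Feynman--Kac bound of Theorem~\ref{thm-a2.1}(b) for the class $\Phi=\{(u,u):\ u\in C_{\mathrm{Lip}}\cap b\BB,\ \|u\|_{\mathrm{Lip}}\le1\}$ with $\alpha(r)=r^2/(4C^2)$ (so that $\alpha^\circledast(\lambda)=C^2\lambda^2$), and then read off (\ref{thm41a}) and (\ref{thm41b}) from the equivalences already established in Section~2. First I would reduce to a bounded test function: it suffices to prove (\ref{thm41a}) for $\nu$ with $I(\nu|\mu)<+\infty$, and by Kantorovich duality $W_1(\nu,\mu)=\sup\{\nu(u)-\mu(u):\|u\|_{\mathrm{Lip}}\le1\}$; since $\int d^2(x,x_0)\,d\mu<+\infty$ forces $\int d(x,x_0)\,d\nu<+\infty$ whenever $W_1(\nu,\mu)<+\infty$, truncating an unbounded Lipschitz $u$ at level $n$ and letting $n\to\infty$ (dominated convergence, exactly as in the proof of Corollary~\ref{cor21}(1)) leaves this supremum unchanged when restricted to bounded Lipschitz functions. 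So fix $u\in C_{\mathrm{Lip}}\cap b\BB$ with $\|u\|_{\mathrm{Lip}}\le1$, and after subtracting $\mu(u)$ assume $\mu(u)=0$; by Lemma~\ref{lem21} it is then enough to show $\Lambda(\lambda u)\le C^2\lambda^2$ for every $\lambda\ge0$.

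The heart of the argument is a one-line estimate using the Poisson equation. Applying the hypothesis to $g=u$ produces $h\in\dd_2(\LL)$ with $\mu(h)=0$, $-\LL h=u$, and (a $\mu$-version with) $\|h\|_{\mathrm{Lip}}\le C$, hence $|\nabla h|\le C$ $\mu$-a.e. For $g\in\dd(\EE)$ with $\mu(g^2)=1$, an integration by parts (first for bounded $g$, so that $g^2\in\dd(\EE)$, then by dominated convergence using that $u$ is bounded and $|\nabla h|\,|g|\,|\nabla g|\in L^1(\mu)$) together with the chain rule $\nabla(g^2)=2g\nabla g$ gives
$$\int u\,g^2\,d\mu=\EE(h,g^2)=\int\nabla h\cdot\nabla(g^2)\,d\mu=2\int g\,\nabla h\cdot\nabla g\,d\mu\le 2C\sqrt{\EE(g,g)},$$
by Cauchy--Schwarz and $\mu(g^2)=1$. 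Hence, for $\lambda\ge0$,
$$\lambda\int u\,g^2\,d\mu-\EE(g,g)\le 2C\lambda\sqrt{\EE(g,g)}-\EE(g,g)\le\sup_{s\ge0}\bigl(2C\lambda s-s^2\bigr)=C^2\lambda^2,$$
and taking the supremum over $g$ yields $\Lambda(\lambda u)\le C^2\lambda^2$. Equivalently --- and this is the route suggested by the section title --- one may argue probabilistically via the Lyons-Meyer-Zheng forward-backward martingale decomposition $\int_0^t u(X_s)\,ds=\tfrac12(M_t+\widehat M_t)$, where $M$ and $\widehat M$ are martingales for the forward and the time-reversed filtrations with quadratic variation $2\int_0^t|\nabla h(X_s)|^2\,ds\le 2C^2t$; then Cauchy--Schwarz and the exponential supermartingale bound give $\ee_\mu\exp\bigl(\lambda\int_0^t u(X_s)\,ds\bigr)\le e^{C^2\lambda^2 t}$, which is property $(b')$ of Theorem~\ref{thm-a2.1} for the same $\alpha$.

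To conclude, Lemma~\ref{lem21} turns $\Lambda(\lambda u)\le C^2\lambda^2$ into $\|P_t^{\lambda u}\|_{L^2(\mu)}\le e^{t(\lambda\mu(u)+C^2\lambda^2)}$, i.e.\ Theorem~\ref{thm-a2.1}(b) holds for $\Phi$ and the convex function $\alpha(r)=r^2/(4C^2)$. The implication $(b)\Rightarrow(a)$, valid since $\alpha$ is convex, gives $W_1(\nu,\mu)^2\le 4C^2\,I(\nu|\mu)$ on this class, and the truncation of the first step removes the boundedness restriction and proves (\ref{thm41a}); the deviation bound (\ref{thm41b}) then follows from (\ref{thm41a}) exactly as in Corollary~\ref{cor21}(1).

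I expect the main obstacle to be not the estimate but the regularity bookkeeping around it: justifying the integration by parts for $g$ only known to lie in $\dd(\EE)$ (so $g^2$ need not belong to $\dd(\EE)$), extracting the pointwise bound $|\nabla h|\le C$ from the Lipschitz bound on the manifold, and carrying out the monotone/dominated convergence passages that take one from bounded Lipschitz test functions and the Section~2 characterizations to the full inequality for the possibly unbounded Riemannian metric $d$. In the probabilistic variant, the delicate points are instead making the Lyons-Meyer-Zheng decomposition rigorous for $h\in\dd_2(\LL)$ that need not be bounded, and identifying the quadratic variation of the backward-filtration martingale.
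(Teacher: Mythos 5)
Your proposal is correct, and your primary argument is a genuinely different route from the paper's. The paper proves Theorem~\ref{thm41} probabilistically: it verifies $(b')$ of Theorem~\ref{thm-a2.1} via the Lyons--Meyer--Zheng forward-backward martingale decomposition $S_t(g)=\tfrac12(M_t(h)+M_t^*(h))$, deduces the \emph{convex} concentration inequality $\ee_\mu\phi(S_t(g))\le\ee\phi(B_{2C^2t})$ for every convex $\phi$ (this is \eqref{convex-control}, valid beyond the exponential case), and specializes to $\phi(x)=e^{\lambda x}$. Your primary argument instead works directly at the level of the Dirichlet form: $\int u\,g^2\,d\mu=\EE(h,g^2)=2\int g\,\nabla h\cdot\nabla g\,d\mu\le 2C\sqrt{\EE(g,g)}$, whence $\Lambda(\lambda u)\le C^2\lambda^2$, and then Lemma~\ref{lem21}(1) plus $(b)\Rightarrow(a)$ of Theorem~\ref{thm-a2.1} for the convex $\alpha(r)=r^2/(4C^2)$ close it off. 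This analytic route is shorter and avoids the stochastic-calculus machinery (time change, comparison of forward and backward filtrations); it produces Theorem~\ref{thm-a2.1}(b), i.e.\ the $L^2$-operator bound, directly. What it loses relative to the paper is precisely the convex concentration inequality \eqref{convex-control}, which is a by-product of the martingale decomposition and is used again in Theorem~\ref{thm42} and referenced in connection with Klein--Ma--Privault; the analytic computation only captures the exponential test functions $\phi(x)=e^{\lambda x}$. You correctly flag the two technical points that need care in the analytic proof (reducing to bounded $g$ so that $g^2\in\dd(\EE)$, and passing from bounded Lipschitz $u$ to unbounded $u$ by truncation, as in Corollary~\ref{cor21}(1)); both are routine. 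You also correctly identify the paper's Lyons--Meyer--Zheng route as the alternative and sketch it accurately.
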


\brmk{\rm\
\begin{enumerate}[(i)]
    \item
Let $C_{\mathrm{Lip}}^0$ be the Banach space of those $g\in
C_{\mathrm{Lip}}$ with $\mu(g)=0$, equipped with
$\|\cdot\|_{\mathrm{Lip}}$. Hence the best constant $C$ in
(\ref{thm41a1}) is exactly
$$
\|(-\LL)^{-1}\|_{C_{\mathrm{Lip}}^0}.
$$
By the spectral decomposition we always have (cf. \cite[Proof of
Lemma 4.3]{Wu04})
$$
C=\|(-\LL)^{-1}\|_{C_{\mathrm{Lip}}^0} \ge
\|(-\LL)^{-1}\|_{L^2(\mu)\bigcap \{g\in L^2(\mu);
\mu(g)=0\}}=\SG(\mu).
$$
    \item
The constant in the concentration inequality (\ref{thm41b}) is
sharp. Indeed let $dX_t=\sqrt{2}\,dB_t - X_t \,dt$, which is
reversible with respect to $\mu=\NN(0,1)$ on $\rr$. For this model
we have $\nabla P_t=e^{-t}P_t\nabla$ and then
$$
\nabla (-\LL)^{-1}g = \nabla \int_0^\infty P_tg\, dt= (1-\LL)^{-1}
\nabla g, \forall g\in C_b^\infty(\rr), \mu(g)=0.
$$
This implies $C=1$ in (\ref{thm41a1}). On the other hand for
$u(x)=x$, under $\pp_\mu$, the law of
$$
\frac 1t\int_0^t u(X_s)\,ds =\frac 1t\int_0^t X_s \,ds
$$
is $\NN(0, \sigma^2(t))$ where the variance $\sigma^2(t)$ is given
by
$$
\sigma^2(t)=\frac 2{t^2} \iint_{0\le a\le b\le t} \ee_\mu X_a X_b
\,da db =\frac 2{t^2} \iint_{0\le a\le b\le t} e^{-(b-a)}
\,dadb=\frac 2t - \frac 2{t^2}(1-e^{-t})
$$
from which we get
$$
\lim_{t\to\infty}\frac 1t \log \pp_\mu\left(\frac 1t\int_0^t X_s
\,ds>r\right)= - \frac {r^2}4.
$$
This coincides with the upper bound $-r^2/(4C^2)$ derived from
(\ref{thm41b}), showing the sharpness of (\ref{thm41b}).
\end{enumerate}
}\nrmk

\bprf[Proof of Theorem \ref{thm41}] Let $\Phi=\{(g,g);
\|g\|_{\mathrm{Lip}}\le 1, g \text{ bounded}\}$. Then
$W_1(\nu,\mu)=T_\Phi(\nu,\mu)$ by Kantorovich-Rubinstein's
theorem. Let us verify that $(b')$ of Theorem \ref{thm-a2.1}
holds.
\\
For any $g\in C_{\mathrm{Lip}}$ with $\|g\|_{\mathrm{Lip}}\le 1$,
let $h\in C_{\mathrm{Lip}}\bigcap \dd_2(\LL)$ such that $-\LL
h=g$. Hence
$$
M_t(h):= h(X_t) - h(X_0)+\int_0^t g(X_s)\,ds
$$
and
$$
M^*_t(h):= h(X_0) - h(X_t) + \int_0^t g(X_s)\,ds
$$
have the same law under $\pp_\mu$ by the reversibility  of
$((X_t), \pp_\mu)$. Consequently from Lyons-Meyer-Zheng's
forward-backward martingale decomposition

\begin{equation}\label{forward-backward}
S_t(g):=\int_0^t g(X_s)\,ds=\frac 12(M_t(h)+M_t^*(h)),
\end{equation}
it follows that for any convex function $\phi$ on $\rr$,
$$
\ee_\mu \phi(S_t(g)) \le \frac 12
\ee_\mu[\phi(M_t(h)+\phi(M_t^*(h))]= \ee_\mu \phi(M_t(h))
$$
As $M_t(h)$ is a (forward) continuous martingale,
$M_t(h)=B_{\tau_t}$ where $(B_t)$ is some Brownian motion with
respect to another time-changed filtration $(\hat\FF_t)$, and
$\tau_t=\<M(h)\>_t$ is a $(\hat\FF_t)$-stopping time (a well known
result). Since
$$
\<M(h)\>_t=2\int_0^ t |\nabla h|^2(X_s)\,ds\le 2C^2 t
$$
By Jensen's inequality, we obtain for all convex function $\phi$
on $\rr$ that
\begin{equation}\label{convex-control}
\ee_\mu\phi(S_t(g))\le \ee\phi(B_{\tau_t})= \ee\phi(\ee[B_{2C^2
t}|\hat \FF_{\tau_t})\le \ee \phi(B_{2C^2 t})
\end{equation}
Applying this to $\phi(x)=e^{\lambda x}$, we get
\begin{equation}\label{thm41d}
\ee_\mu \exp\left(\lambda\int_0^ t g(X_s)\,ds\right) \le \ee
e^{\lambda B_{2C^2t}}= e^{\lambda^2C^2t}, \ \forall \lambda\in\rr.
\end{equation}
Hence Theorem \ref{thm-a2.1}-$(b')$ holds with $\Phi=\{(g,g);
\|g\|_{\mathrm{Lip}}\le 1, g \text{ bounded}\}$ and
$\alpha(r)=r^2/(4C^2).$ Therefore (\ref{thm41a}) and
(\ref{thm41b}) follow from Theorem \ref{thm-a2.1}.
 \nprf

Klein-Ma-Privault \cite{KMP06} developed {\it convex concentration
inequality} (\ref{convex-control}) for semimartingales instead of
$S_t(g)$, by means of a forward-backward martingale calculus, but
their result cannot be applied directly here.

Before estimating the constant $C$ in condition (\ref{thm41a1}),
we extend the above result to general symmetric Markov diffusions
by following Bakry \cite{Ba92}.

Let $((X_t), \pp_\mu)$ be a reversible ergodic Markov process with
the Dirichlet form $(\EE, \dd(\EE))$, with continuous sample paths
valued in some separable complete metric space $(\XX,d)$ (called
Markov diffusion). We assume that
 $(\EE, \dd(\EE))$ is given by the carr\'e-du-champs
$\Gamma: \dd(\EE)\times \dd(\EE)\to L^1(\mu)$ (symmetric, bilinear
definite nonnegative form):
\begin{equation}\label{Gamma}
\EE(h,h)=\int_\XX \Gamma(h,h)\,d\mu,\ \forall h\in\dd(\EE).
\end{equation}
The continuity of sample paths of $(X_t)$ implies that $\Gamma$ is
a differentiation, that is: for all $(h_k)_{1\le k\le n}\subset
\dd(\EE), g\in \dd(\EE)$ and $F\in C_b^1(\rr^n)$,
$$
\Gamma(F(h_1,\cdots, h_n), g) = \sum_{i=1}^n
\partial_iF(h_1,\cdots, h_n)\Gamma(h_i, g).
$$

With exactly the same proof as that of Theorem \ref{thm41} we have

\begin{thm}\label{thm42} Assume that $\int_\XX d^2(x,x_0)\,d\mu(x)<+\infty$
and for any $g\in C_{\mathrm{Lip}}(\XX,d)$ bounded with
$\mu(g)=0$, then $g\in \dd(\EE)$ and
\begin{equation}\label{thm42a}
\sqrt{\Gamma(g,g)}\le \sigma \|g\|_{\mathrm{Lip}}, \
\mu\textrm{-a.s.}
\end{equation}
and there is some $h\in \dd_2(\LL)$ such that $-\LL h=g$
($\mu$-a.e.) and a $\mu$-continuous version $\tilde h$ of $h$
satisfying
\begin{equation}\label{thm42b}
\|\tilde h\|_{\mathrm{Lip}} \le C \|g\|_{\mathrm{Lip}}
\end{equation}
where $\sigma, C>0$ are fixed constants. Then for any $u\in
C_{\mathrm{Lip}}(\XX,d)$ and any convex function $\phi$ on $\rr$,
$$
\ee_\mu \phi(S_t(g)) \le \ee \phi(B_{2\sigma^2 C^2 t})
$$
where $B$ is a standard Brownian Motion. In particular
$$
\ee_\mu \exp\left(\lambda\int_0^ t g(X_s)\,ds\right) \le
e^{\lambda^2 (\sigma C)^2\|g\|_{\mathrm{Lip}}^2 t}, \ \forall
\lambda\in\rr, t>0.
$$
and $\mu$ satisfies $W_1I(\sigma C)$ on $(\XX,d)$.
\end{thm}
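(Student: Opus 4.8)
The plan is to transcribe the proof of Theorem~\ref{thm41}, replacing the Riemannian quantity $|\nabla h|^2$ by the carr\'e-du-champ $\Gamma(h,h)$ everywhere and tracking the extra factor $\sigma$. First I set $\Phi=\{(g,g):\ \|g\|_{\mathrm{Lip}}\le1,\ g\ \text{bounded}\}$, so that $W_1(\nu,\mu)=T_\Phi(\nu,\mu)$ by the Kantorovich--Rubinstein theorem. Since $\nu(g)-\mu(g)$ is unchanged when $g$ is shifted by a constant, it is enough to prove property $(b')$ of Theorem~\ref{thm-a2.1} for test functions $g$ with the extra normalization $\mu(g)=0$, in which case $\mu(v)=0$ and the required bound reads $\limsup_{t\to\infty}\frac1t\log\ee_\mu\exp(\lambda\int_0^t g(X_s)\,ds)\le\alpha^\circledast(\lambda)$ with $\alpha(r)=r^2/(4\sigma^2C^2)$, so that $\alpha^\circledast(\lambda)=\sigma^2C^2\lambda^2$. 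Granting this, Theorem~\ref{thm-a2.1} yields $W_1I(\sigma C)$, and the passage from bounded to arbitrary Lipschitz test functions in the deviation inequality is the elementary monotone-convergence argument of Corollary~\ref{cor21}(1), which is legitimate because $\int d^2(x,x_0)\,d\mu<\infty$.

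To establish $(b')$, fix a bounded $g\in C_{\mathrm{Lip}}(\XX,d)$ with $\mu(g)=0$ and $\|g\|_{\mathrm{Lip}}\le1$. By hypothesis there is $h\in\dd_2(\LL)$ with $-\LL h=g$ $\mu$-a.s.\ and a $\mu$-continuous version $\tilde h$ with $\|\tilde h\|_{\mathrm{Lip}}\le C$; applying (\ref{thm42a}) to $\tilde h$ (and using locality of $\Gamma$ to treat a possibly unbounded $\tilde h$ via truncation) gives $\Gamma(h,h)\le\sigma^2C^2$ $\mu$-a.s. By reversibility of $((X_t),\pp_\mu)$ the forward functional $M_t(h)=\tilde h(X_t)-\tilde h(X_0)+\int_0^t g(X_s)\,ds$ and its time-reversal $M_t^*(h)=\tilde h(X_0)-\tilde h(X_t)+\int_0^t g(X_s)\,ds$ have the same law, so the Lyons--Meyer--Zheng decomposition (\ref{forward-backward}) gives $S_t(g)=\frac12(M_t(h)+M_t^*(h))$ and hence, for every convex $\phi$, $\ee_\mu\phi(S_t(g))\le\ee_\mu\phi(M_t(h))$.

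Next I use that, $(X_t)$ being a diffusion with $\Gamma$ a differentiation, the continuous martingale $M(h)$ has bracket $\langle M(h)\rangle_t=2\int_0^t\Gamma(h,h)(X_s)\,ds\le2\sigma^2C^2t$; this is the sole point at which Bakry's carr\'e-du-champ calculus \cite{Ba92} substitutes for the Riemannian chain rule used in Theorem~\ref{thm41}. Writing $M_t(h)=B_{\tau_t}$ for a Brownian motion $B$ in a time-changed filtration $(\hat\FF_t)$ with $\tau_t=\langle M(h)\rangle_t\le2\sigma^2C^2t$, and conditioning $B_{2\sigma^2C^2t}$ on $\hat\FF_{\tau_t}$ exactly as in (\ref{convex-control}), Jensen's inequality gives $\ee_\mu\phi(S_t(g))\le\ee\phi(B_{2\sigma^2C^2t})$, which is the first displayed conclusion of the theorem. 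Taking $\phi(x)=e^{\lambda x}$ yields $\ee_\mu\exp(\lambda\int_0^t g(X_s)\,ds)\le e^{\lambda^2\sigma^2C^2t}$ for all $\lambda\in\rr$, and by homogeneity $\ee_\mu\exp(\lambda\int_0^t g(X_s)\,ds)\le e^{\lambda^2(\sigma C)^2\|g\|_{\mathrm{Lip}}^2t}$ for general $g$; this is exactly $(b')$ with the announced $\alpha$, completing the proof.

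The main obstacle is the bracket identity $\langle M(h)\rangle_t=2\int_0^t\Gamma(h,h)(X_s)\,ds$ and the integrability bookkeeping it entails: since $h$ is known only through a possibly unbounded Lipschitz version, one must argue by localization, using $\int d^2(x,x_0)\,d\mu<\infty$, that $M(h)$ is a genuine (locally) square-integrable continuous martingale with that bracket, and that (\ref{thm42a}), stated for \emph{bounded} Lipschitz functions, still delivers $\Gamma(h,h)\le\sigma^2C^2$ $\mu$-a.s.\ for $\tilde h$. Everything else is a line-by-line copy of the proof of Theorem~\ref{thm41}.
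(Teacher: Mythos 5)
Your proposal is correct and takes essentially the same approach as the paper: the paper's own proof of Theorem~\ref{thm42} consists solely of the remark that it is ``with exactly the same proof as that of Theorem~\ref{thm41},'' and you have faithfully carried out that transcription, replacing $|\nabla h|^2$ by $\Gamma(h,h)$, introducing the extra factor $\sigma$ in the bracket bound, and correctly flagging the only place requiring care (extending the bound $\Gamma(\tilde h,\tilde h)\le\sigma^2C^2$ to the possibly unbounded Lipschitz solution $\tilde h$ by localization).
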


\subsection{Multi-dimensional diffusions}

Let us show now how to estimate the constant $C$ in
(\ref{thm41a1}) or (\ref{thm42b}) by means of some examples.

\subsubsection*{A first example}
At first in the framework of Theorem \ref{thm41}, if the
Bakry-Emery curvature is positive
$$
\mathrm{Ric} + \nabla ^2V \ge K>0
$$
then it is well known that for $g\in C^1_b(\XX)$ with $\mu(g)=0$,
$$
|\nabla P_t g| \le e^{-K t} P_t|\nabla g|
$$
and then $h:=\int_0^\infty P_t g\, dt$ is absolutely convergent in
$C_{\mathrm{Lip}}\subset L^2(\mu)$ (for $\int_\XX
d^2(x,x_0)\,d\mu<+\infty$). Hence $h\in \dd_2(\LL)$, $-\LL h=g$
and
$$
\|h\|_{\mathrm{Lip}} \le \|\nabla g\|_\infty\int_0^\infty
e^{-Kt}\, dt  = \frac 1K \|g\|_{\mathrm{Lip}}.
$$
In other words condition (\ref{thm41a1}) holds with $C=K^{-1}$ and
 $\mu$ satisfies $W_1I(K^{-1})$. Of course one can also derive this
sharp  transportation inequality from the log-Sobolev inequality
of Bakry-Emery \cite{BE85} and Proposition \ref{prop25}-(b).

\subsubsection*{A second example}
Now we turn to another situation where the log-Sobolev inequality
is unknown as in Djellout et al.\! \cite{DGW03}. Consider the
stochastic differential equation

\begin{equation}\label{a44} dX_t =\sqrt{2} \sigma(X_t)\,dB_t + b(X_s)\,ds \end{equation}
where $\sigma : \rr^d\to\MM_{d\times n}$ (the space of real
$d\times n$-matrices) and $b: \rr^d\to \rr^d$ are locally
Lipschitz, and $(B_t)$ is a standard Brownian motion in $\rr^n$.
Assume that for some $\delta>0$,
\begin{equation}\label{45}
\mathrm{tr}[(\sigma(y)-\sigma(x))(\sigma(y)-\sigma(x))^T] + \<y-x,
b(y)-b(x)\> \le - \delta |y-x|^2, \ \forall x,y\in\rr^d
\end{equation}
Here $\mathrm{tr}(\cdot)$ denotes the trace and $A^T$ the
transposition of matrix $A.$ With It\^o's formula one easily
obtains
$$
\ee |X_t(y)-X_t(x)|^2\le e^{-2\delta t} |x-y|^2, \ \forall x,y\in
\rr^d, t\ge0
$$
where $X_t(x)$ is the solution of (\ref{a44}) with $X_0=x.$ This
implies that $(X_t)$ has a unique invariant probability measure
$\mu$. Hence for any $g\in C_{\mathrm{Lip}}(\rr^d)$ (with respect
to the Euclidean norm $|\cdot|$) with $\mu(g)=0$,

\begin{equation}\label{46}
\|P_t g\|_{\mathrm{Lip}} = \sup_{x\ne y} \frac{|\ee g(X_t(y)) -\ee
g(X_t(x))|}{|y-x|} \le e^{-\delta t} \|g\|_{\mathrm{Lip}}.
\end{equation}
Then $h:=\int_0^\infty P_t g\,dt$ is absolutely convergent in
$C_{\mathrm{Lip}}(\rr^d)$ and $\|h\|_{\mathrm{Lip}} \le
\delta^{-1} \|g\|_{\mathrm{Lip}}$. In other words (\ref{thm42b})
holds with $C=\delta^{-1}$. Finally as the carr\'e-du-champ of
$(X_t)$ is given by
$$
\Gamma(h,h)(x) =\<\sigma\sigma^T\nabla h(x), \nabla h(x)\>,\ h\in
C^2_0(\rr^d)
$$
the constant $\sigma$ in (\ref{thm42a}) can be identified as
$\|\sigma(\cdot)\|_\infty:=\sup_x\|\sigma(x)\|_{\rr^n\to\rr^d}$,
at least for $h\in C_0^2(\rr^d).$

\bcor\label{cor43} Assume that $\sigma, b$ are locally Lipschitz
such that $\|\sigma(\cdot)\|_\infty<+\infty$ and satisfy the
dissipativity condition (\ref{45}). Suppose moreover that $\mu$ is
absolutely continuous and its transition semigroup $(P_t)$ is
symmetric with respect to the unique invariant measure $\mu$. Then
the Dirichlet form $(\EE, \dd(\EE))$ on $L^2(\mu)$ is given by the
closure of
$$
\EE(h,h)=\int_{\rr^d} \<\sigma\sigma^T \nabla h, \nabla h\>
\,d\mu,\quad h\in C_0^\infty(\rr^d)
$$
and $\mu$ satisfies $W_1I(c)$ on $\rr^d$ with respect to the
Euclidean metric with
$$
c=\|\sigma(\cdot)\|_\infty/\delta.
$$
\ncor

\bprf As $C_{\mathrm{Lip}}\bigcap\dd_2(\LL)$ is stable by $(P_t)$
and contains $C_0^\infty(\rr^d)$,
$C_{\mathrm{Lip}}\bigcap\dd_2(\LL)$ is an operator core for $(\LL,
\dd_2(\LL))$, hence a form core for $(\EE, \dd(\EE))$. Since any
$h\in C_{\mathrm{Lip}}$ can be approached by a sequence $h_n$ in
$C_0^\infty(\rr^d)$ with respect to the norm
$$
\sqrt{\|h\|_2^2 + \int_{\rr^d} \<\sigma\sigma^T \nabla h, \nabla
h\> \,d\mu}
$$
one sees that $C_{\mathrm{Lip}}\subset \dd(\EE),$
$$
\EE(h,h) =  \int_{\rr^d} \<\sigma\sigma^T \nabla h, \nabla h\>
\,d\mu, \ \forall h\in C_{\mathrm{Lip}}
$$
and $C_0^\infty(\rr^d)$ is a form core for $(\EE, \dd(\EE))$. This
proves the first claim. It also follows that the conditions in
Theorem \ref{thm42} are verified. The remaining part follows from
Theorem \ref{thm42}. \nprf

\medskip\noindent \textbf{Remark.}\  The crucial formula (\ref{46}) for estimating our
condition (\ref{thm42b}) is equivalent to
\begin{equation}
W_1(P_t(x,\cdot), P_t(y,\cdot)) \le e^{-\delta t} d(x,y)
\end{equation}
which can obtained by means of numerous coupling techniques, see
M.F.~Chen \cite{Chen05}. Note that this condition is the one
introduced by Joulin \cite{JouPhD} under the name of Wasserstein's
curvature, with which he obtains Poisson type deviation
inequality.

\subsection{One-dimensional diffusions}

Now let us consider one-dimensional diffusion processes with
values in the interval $(x_0,y_0)$ and generated by
$$
\LL h = a h^{\prime\prime}+b h',\quad  h\in C_0^\infty(x_0,y_0)
$$
where $a,b$ are continuous with $a>0.$ Let $((X_t)_{0\le
t<\tau},\pp_x)$ be the martingale solution associated with $\LL$
and initial position $x$, where $\tau$ is the explosion time. With
a fixed $c\in (x_0,y_0)$,
$$
s'(x):=\exp\left(-\int_c^x \frac {b(z)}{a(z)}\, dz\right),\
m'(x):=\frac 1{a(x)} \exp\left(\int_c^x \frac {b(z)}{a(z)}
\,dz\right)
$$
 are respectively the derivatives of Feller's scale and speed
functions. Assume that
\begin{equation}\label{D1}
Z:=\int_{x_0}^{y_0} m'(x)\,dx<+\infty
\end{equation}
and let $\mu(dx)=m'(x)dx/Z$. It is well known that $(\LL,
C_0^\infty(x_0,y_0))$ is symmetric on $L^2(\mu)$.
\\
Assume also that

\begin{equation}\label{D2}
\int_c^{y_0} s'(x)\,dx \int_c^x m'(x)\,dx=\int_{x_0}^c
s'(x)\int_x^c m'(x)\,dx=+\infty
\end{equation}
which, in Feller's classification, means that $x_0$ and $y_0$ are
not accessible or equivalently $\tau=\infty$, $\pp_x$-a.s. In this
case by the $L^1$-uniqueness in \cite{Wu99, Ebe99}, the Dirichlet
form
\begin{eqnarray*}
\dd(\EE)&=&\left\{h\in\AA\CC(x_0, y_0)\bigcap L^2(\mu);
\int_{x_0}^{y_0}(h')^2\,d\mu<+\infty\right\}, \\
\EE(h,h)&=&\int_{x_0}^{y_0}(h')^2\,d\mu,\quad h\in\dd(\EE)
\end{eqnarray*}
is associated with $(X_t)$, where $\AA\CC(x_0, y_0)$ is the space
of the absolutely continuous functions on $(x_0,y_0)$.
\\
Fix some $\rho\in C^1(x_0,y_0)$ such that $\rho\in L^2(\mu)$ and
$\rho'(x)>0$ everywhere, consider the metric
$d_\rho(x,y)=|\rho(x)-\rho(y)|$. A function $h$ on $(x_0,y_0)$ is
Lipschitz with respect to $d_\rho$ (one writes $h\in
C_{\mathrm{Lip}(\rho)}$) if and only if $h\in\AA\CC(x_0,y_0)$ and
$$
\|h\|_{\mathrm{Lip}(\rho)}=\sup_{x_0<x<y<y_0}
\frac{|h(y)-h(x)|}{\rho(y)-\rho(x)}=\left\|
{h'}/{\rho'}\right\|_\infty<\infty
$$
The argument below is borrowed from \cite{p-DMW}. Assume that
\begin{equation}\label{C-rho}
C(\rho):=\sup_{x\in (x_0,y_0)} \frac 1{\rho'(x)} \int_x^{y_0}
[\rho(z)-\mu(\rho)] m'(z)\,dz<+\infty.
\end{equation}
For every $g\in C_{\mathrm{Lip}(\rho)}$ with $\mu(g)=0$, then the
$C^2$ function $ h(x)=\int_c^x dy \int_y^{y_0} g(z) m'(z)\,dz - A
$ solves
\begin{equation}\label{cor44a}
-(a h^{\prime\prime} + b h') = g.
\end{equation}
It is obvious that $ \|h\|_{\mathrm{Lip}(\rho)}=\sup_{x\in
(x_0,y_0)} \frac 1{\rho'(x)} \int_x^{y_0} g(z) m'(z)\,dz. $ An
elementary exercise (see \cite{p-DMW}) shows that this quantity is
not greater than $C(\rho)\|g\|_{\mathrm{Lip}(\rho)}$. Thus $h$
belongs to $L^2(\mu)$ whenever $\rho$ is in $L^2(\mu)$. By It\^o's
formula, $h\in \dd_2(\LL)$.   With the constant $A$ so that
$\mu(h)=0$, because of the ergodicity of $(X_t),$ $h$ is the
unique solution of (\ref{cor44a}) in $L^2(\mu)$ with zero mean.
One also sees that $C(\rho)$ is the best constant by taking
$g=\rho-\mu(\rho)$. In other words, condition (\ref{thm42b}) is
satisfied with $C=C(\rho)$. Hence, with Theorem \ref{thm42} one
obtains

\bcor\label{cor44} Let $a,b: (x_0,y_0)\to \rr$ be continuous with
$a>0$ and let conditions (\ref{D1}) and (\ref{D2}) be satisfied.
Assume (\ref{C-rho}) and $\sigma:=\sup_{x\in (x_0,y_0)}
\sqrt{a(x)} \rho'(x)<+\infty$. Then $\mu$ satisfies $W_1I(\kappa)$
on $((x_0,y_0), d_{\rho})$ with $\kappa=(\sigma C(\rho))^{-1}$. In
particular for
$$
\rho_a(x)=\int_c^x \frac {dz}{\sqrt{a(z)}}
$$
 if $C(\rho_a)<+\infty$, then $\mu$  satisfies
$W_1I( c)$ on $((x_0,y_0), d_{\rho_a})$ with $c=C(\rho_a)$. \ncor

\brmk{\rm\
\begin{enumerate}
    \item $d_{\rho_a}$ is the metric associated with the carr\'e-du-champ
operator of the diffusion.
    \item The quantity $C(\rho)$ in (\ref{C-rho}) is not innocent:
Chen-Wang's variational formula for the spectral gap tells us that
(\cite{Chen05, Wang}): $ \SG(\mu)=\inf_{\rho}
 C(\rho). $
\end{enumerate} } \nrmk


\section{Lyapunov function conditions}

We will use in this section general conditions on the generator of
the process, known as Lyapunov function conditions, for deriving
$W_1I$ or more generally $T_\Phi I$ where $\Phi=\{(u,u); |u|\le
\phi\}$ with $\phi$ unbounded, and log-Sobolev inequality. To state properly the Lyapunov
function condition, it is necessary to enlarge the domain of the
generator. In this section,
 the Markov process
$((X_t), \pp_\mu)$ is reversible and its sample paths are
$\pp_\mu$-c\`adl\`ag (possibly with jumps).

A continuous function $h$ is said to be in the $\mu$-extended
domain $\dd_e(\LL)$ of the generator of the Markov process
$((X_t), \pp_\mu)$ if there is some measurable function $g$ such
that $\int_0^t |g|(X_s)\,ds<+\infty, \pp_\mu$-a.s.\! and
$$
M_t(h):=h(X_t) - h(X_0)-\int_0^t g(X_s)ds
$$
is a local $\pp_\mu$-martingale. It is obvious that $g$ is
uniquely determined  up to $\mu$-equivalence. In such case one
writes $h\in\dd_e(\LL)$ and $\LL h=g$.

The Lyapunov condition can now be stated:
\begin{itemize}
\item[$(H)$] There exist a continuous function $U:\mathcal{X}\to
[1,+\infty)$ in $\dd_e(\LL)$, a nonnegative function $\phi$ and a
constant $b>0$ such that
$$- \frac{\mathcal{L}U}{U}\ge \phi -b, \ \mu\textrm{-a.s.}$$
\end{itemize}
When the process is irreducible and the constant $b$ is replaced
by $b1_C$ for some ``small set" $C$, then it is well-known that
the existence of a positive  bounded $\phi$  such that
$\inf_{\XX\setminus C}\phi>0$ in $(H)$ is equivalent to Poincar\'e
inequality (see \cite{p-BCG}, for instance).

Lyapunov conditions are widely used to study the speed of
convergence of Markov chains \cite{MT} or Markov processes
\cite{p-DFG}, large or moderate deviations and essential spectral
radii \cite{Wu01, Wu04}. More recently, they have been used to
study functional inequalities as weak Poincar\'e inequality
\cite{p-BCG} or super-Poincar\'e inequality \cite{CGWW}. See Wang
 \cite{Wang} on weak and super Poincar\'e inequalities.

 \begin{thm}\label{thm51}
Assume that $\mu$ satisfies a Poincar\'e inequality with best
constant $\SG(\mu)<\infty$ and that the Lyapunov condition $(H)$
holds. Suppose moreover that $\phi\in L^2(\mu),$ that is
$\|\phi\|_2:=(\int\phi^2\,d\mu)^{1/2}<\infty.$ Then, for any $a\ge
2$ and for every probability measure $\nu$,
\begin{equation}\label{thm51a}
\|\phi(\nu-\mu)\|_{TV}\le\left(1+2b \SG(\mu)\right)\frac{a+1}{a-1}
I(\nu|\mu) +a\sqrt{2} \|\phi\|_2 \sqrt{\SG(\mu)\,I(\nu|\mu)}
\end{equation}
 and
\begin{equation}\label{thm51b}\|\sqrt{\phi}(\nu-\mu)\|_{TV}^2\le 2\left[3\left(1+2b\SG(\mu) \right)
 +2\sqrt{2 } \|\phi\|_2 \SG(\mu)
 \right] I(\nu|\mu)
\end{equation}
\end{thm}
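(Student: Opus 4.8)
Both inequalities are trivial when $I(\nu|\mu)=+\infty$, so assume $\nu=f\mu$ with $g:=\sqrt f\in\dd(\EE)$ and $I(\nu|\mu)=\EE(g,g)$. Two elementary facts will be used repeatedly. First, since $0\le\mu(g)\le1$,
\[
\int(g-1)^2\,d\mu=2\big(1-\mu(g)\big)\le 2\,\Var_\mu(g)\le 2\,\SG(\mu)\,I(\nu|\mu),
\]
the last step being the Poincaré inequality. Second, by $(\ref{thm1a})$ of Theorem \ref{thm3} together with Poincaré, $\tfrac14\|\nu-\mu\|_{\mathrm{TV}}^2\le\Var_\mu(g)\le\SG(\mu)I(\nu|\mu)$, hence $\|\nu-\mu\|_{\mathrm{TV}}\le\min\{2,\,2\sqrt{\SG(\mu)I(\nu|\mu)}\}$, the bound $2$ being automatic.

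\textbf{A weighted Poincaré inequality from $(H)$.} The engine is the estimate: for a suitable class of $h\in\dd(\EE)$ (in particular bounded ones for which $h^2/U\in\dd(\EE)$),
\[
\int\phi\,h^2\,d\mu\ \le\ \EE(h,h)+b\int h^2\,d\mu .
\]
Indeed $(H)$ gives $\phi\le -\LL U/U+b$ $\mu$-a.s., so after integrating by parts it suffices to check $\EE(h^2/U,U)\le\EE(h,h)$. On the strongly local part of $\EE$ this is the Cauchy–Schwarz bound $2\tfrac hU\Gamma(U,h)\le\Gamma(h,h)+\tfrac{h^2}{U^2}\Gamma(U,U)$ applied to $\Gamma(h^2/U,U)=\tfrac{2h}U\Gamma(U,h)-\tfrac{h^2}{U^2}\Gamma(U,U)$; on the jump part it follows from the Beurling–Deny representation and the pointwise inequality $\big(\tfrac{p^2}{s}-\tfrac{q^2}{t}\big)(s-t)\le(p-q)^2$ for $s,t>0$ (an AM–GM computation). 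Applying this with $h$ equal to the truncations $(g\wedge N)-1$ of $g-1$, using $\EE(g\wedge N,g\wedge N)\le\EE(g,g)$ and $\int((g\wedge N)-1)^2\,d\mu\le\int(g-1)^2\,d\mu\le 2\SG(\mu)I(\nu|\mu)$, and letting $N\to\infty$ by monotone convergence, one obtains $\int\phi\,(g-1)^2\,d\mu\le(1+2b\SG(\mu))\,I(\nu|\mu)$.

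\textbf{Conclusion of both inequalities.} From the pointwise bound $|g^2-1|\le(g-1)^2+2|g-1|$ (valid for all $g\ge0$) and the Cauchy–Schwarz estimate $\int\phi|g-1|\,d\mu\le\|\phi\|_2\sqrt{\int(g-1)^2\,d\mu}\le\|\phi\|_2\sqrt{2\SG(\mu)I(\nu|\mu)}$, one gets
\[
\|\phi(\nu-\mu)\|_{\mathrm{TV}}=\int\phi|f-1|\,d\mu\ \le\ \big(1+2b\SG(\mu)\big)I(\nu|\mu)+2\sqrt2\,\|\phi\|_2\sqrt{\SG(\mu)I(\nu|\mu)} ,
\]
which, since $\tfrac{a+1}{a-1}\ge1$ and $a\sqrt2\ge2\sqrt2$ for $a\ge2$, implies $(\ref{thm51a})$; the elementary inequality $g+1\le\tfrac{a+1}{a-1}(g-1)$, valid precisely for $g\ge a$ (recall $a>1$) and giving $g^2-1\le\tfrac{a+1}{a-1}(g-1)^2$ on $\{g\ge a\}$, is the device that lets one retain the free parameter $a$ if the weighted Poincaré inequality is used only through truncations at level $a$ rather than through $N\to\infty$. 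For $(\ref{thm51b})$, Cauchy–Schwarz gives $\|\sqrt\phi(\nu-\mu)\|_{\mathrm{TV}}^2=\big(\int\sqrt\phi|f-1|\,d\mu\big)^2\le\|\phi(\nu-\mu)\|_{\mathrm{TV}}\cdot\|\nu-\mu\|_{\mathrm{TV}}$; inserting $(\ref{thm51a})$ with $a=2$ and $\|\nu-\mu\|_{\mathrm{TV}}\le\min\{2,2\sqrt{\SG(\mu)I(\nu|\mu)}\}$, and distinguishing the cases $\SG(\mu)I(\nu|\mu)\ge1$ (use $\|\nu-\mu\|_{\mathrm{TV}}\le2$ and $\sqrt{\SG(\mu)I(\nu|\mu)}\le\SG(\mu)I(\nu|\mu)$) and $\SG(\mu)I(\nu|\mu)<1$ (use $\|\nu-\mu\|_{\mathrm{TV}}\le2\sqrt{\SG(\mu)I(\nu|\mu)}$ and $I(\nu|\mu)\sqrt{\SG(\mu)I(\nu|\mu)}\le I(\nu|\mu)$), converts the apparent $I^{3/2}$ term into a linear one and yields exactly $(\ref{thm51b})$.

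\textbf{Main obstacle.} The delicate point is making the weighted Poincaré inequality rigorous: since $U$ is only assumed continuous and in the \emph{extended} domain $\dd_e(\LL)$, neither the integration by parts $\int(-\LL U/U)h^2\,d\mu=\EE(h^2/U,U)$ nor the membership $h^2/U\in\dd(\EE)$ is automatic, and one must argue through bounded approximations of $h$ (and, in general, of $U$ as well). This is exactly where the hypothesis $\phi\in L^2(\mu)$ is indispensable — it controls the error in passing from the truncated $h$ to $g-1$ and in the Cauchy–Schwarz step applied to $\int\phi|g-1|\,d\mu$ — and it is also the origin of the somewhat artificial parameter $a$ in the final statement. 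Once the weighted Poincaré estimate is in hand, everything else is the elementary bookkeeping above.
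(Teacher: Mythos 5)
Your bookkeeping is correct, and your decomposition $|g^2-1|\le(g-1)^2+2|g-1|$ is in fact a cleaner choice than the one the paper makes. The paper introduces the truncation function
\[
h(t)=\begin{cases} 0 & t\le 1,\\ \sqrt{\tfrac{a+1}{a-1}}\,(t-1) & 1\le t\le a,\\ \sqrt{t^2-1} & t\ge a,\end{cases}
\]
writes $\int\phi|f-1|\,d\mu=\int\phi h^2(\sqrt f)\,d\mu+\int\phi\big[|f-1|-h^2(\sqrt f)\big]\,d\mu$, uses the Lipschitz contraction property of $\EE$ to control $\EE(h(\sqrt f),h(\sqrt f))\le\tfrac{a+1}{a-1}\EE(\sqrt f,\sqrt f)$, and bounds the remainder by $a\int\phi|1-\sqrt f|\,d\mu$. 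Your direct use of $(g-1)^2$ plus $2|g-1|$ gives the bound with $\tfrac{a+1}{a-1}$ replaced by $1$ and $a$ by $2$, so you actually prove a sharper inequality than (\ref{thm51a}); the paper's parameter $a$ in this part is an artifact of the $h$-split and is not essential. Your treatment of (\ref{thm51b}) (Cauchy--Schwarz plus the dichotomy $\SG(\mu)I(\nu|\mu)\gtrless 1$) is exactly the paper's computation.

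The place where your proposal and the paper genuinely diverge is the weighted Poincar\'e step, and this is also where your argument is incomplete. You try to establish $\int\phi\,h^2\,d\mu\le\EE(h,h)+b\int h^2\,d\mu$ by integrating $-\LL U/U$ by parts against $h^2$ and then applying a Cauchy--Schwarz/AM--GM estimate to $\EE(h^2/U,U)$ on the local and jump parts of the Dirichlet form. As you yourself flag, with $U$ merely a continuous function of the \emph{extended} domain $\dd_e(\LL)$ (not $\dd_2(\LL)$), the identity $\int(-\LL U/U)\,h^2\,d\mu=\EE(h^2/U,U)$ and the membership $h^2/U\in\dd(\EE)$ are not available, and no amount of truncating $h$ alone will close that gap without further regularization of $U$. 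The paper sidesteps the Dirichlet-form computation entirely: Lemma~\ref{lem52} proves directly that $\int(-\LL U/U)\,g^2\,d\mu\le\EE(g,g)$ for every $g\in\dd(\EE)$, by observing that $N_t=U(X_t)\exp\big(-\int_0^t(\LL U/U)(X_s)\,ds\big)$ is a nonnegative local martingale (hence supermartingale) under $\pp_\beta$ with $\beta=U^{-1}\mu/Z$, which yields an exponential-moment bound, and then invoking the large deviation lower bound and the Laplace--Varadhan principle to convert that into the quadratic-form inequality. This probabilistic route needs only $U\in\dd_e(\LL)$ with $-\LL U/U$ bounded below, exactly what $(H)$ provides. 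Had you taken Lemma~\ref{lem52} as given and applied it to the (bounded) test functions $(g\wedge N)-1\in\dd(\EE)$, your argument would have been complete and slightly sharper than the paper's.

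One smaller point: you attribute to $\phi\in L^2(\mu)$ a role in ``passing from the truncated $h$ to $g-1$'', but that passage is pure monotone convergence and needs nothing of $\phi$. The hypothesis $\phi\in L^2(\mu)$ enters only in the Cauchy--Schwarz step $\int\phi|g-1|\,d\mu\le\|\phi\|_2\,\|g-1\|_2$, just as in the paper.
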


\brmk {\rm Since $\|\phi(\nu-\mu)\|_{TV}=\sup_{u: |u|\le \phi}
\int u\,d(\nu-\mu)$, the inequalities in this theorem may be
regarded as $T_\Phi I$ in Theorem \ref{thm-a2.1} with
$\Phi=\{(u,u); u\in b\BB, |u|\le \phi\}$. Since

$$
W_1(\nu, \mu)=\sup_{f:\|f\|_{\mathrm{Lip}}\le 1} \int
f\,d[\nu-\mu]\le \inf_{x_0\in\XX} \|d(\cdot,x_0) (\nu-\mu)\|_{TV},
$$
one sees that (\ref{thm51b}) implies $W_1I(c)$ in Corollary
\ref{cor21} as soon as  $d(\cdot,x_0)\le C \sqrt{\phi}$ for some
$x_0\in \XX$ and $C>0$. As will be seen with the
Ornstein-Uhlenbeck process at Example \ref{exam51}, the order of
this inequality is sharp.} \nrmk

\begin{thm}\label{thm52} In the framework of Proposition \ref{prop25}, assume that the Bakry-Emery's curvature of $\mu=e^{-V} dx/Z$ is bounded from below by some constant $K\le 0$. 
Assume that $\mu$ satisfies a Poincar\'e inequality with best
constant $\SG(\mu)<\infty$. 

If the Lyapunov condition $(H)$
holds with $\phi(x)=c d(x,x_0)^2$ where $c>0$ and $x_0\in \XX$ is some fixed point, then $\mu$ satisfies the log-Sobolev inequality on the Riemannian manifold $\XX$.  
\end{thm}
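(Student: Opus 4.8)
The plan is to establish a \emph{defective} logarithmic Sobolev inequality $\mathrm{Ent}_\mu(g^2)\le A\,\EE(g,g)+B\,\mu(g^2)$ for all $g\in\dd(\EE)$, and then to tighten it into a genuine log-Sobolev inequality by combining it with the assumed Poincar\'e inequality through Rothaus' lemma. An essentially equivalent endgame is to produce $W_2I(c)$ for some finite $c$ and to conclude via the HWI inequality (\ref{HWI-OV}) of Proposition \ref{prop25}: since $K\le 0$, the requirement $cK\le 1$ there is automatic, so $W_2I(c)$ yields $\LS(\mu)\le 2c-Kc^2<\infty$. Either way, the Bakry-Emery lower bound $K$ and the Poincar\'e constant $\SG(\mu)$ only influence the constants.

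The one genuinely new computation is to extract from the Lyapunov condition $(H)$ the basic estimate
\begin{equation*}
\int_\XX \phi\, g^2\,d\mu\ \le\ \EE(g,g)+b\,\mu(g^2),\qquad\forall\, g\in\dd(\EE).
\end{equation*}
This is the classical consequence of a Lyapunov condition for a symmetric Dirichlet form: an integration by parts together with the pointwise identity $\nabla U\cdot\nabla(g^2/U)=|\nabla g|^2-U^2|\nabla(g/U)|^2$ gives $\int(-\LL U/U)\,g^2\,d\mu=\int\nabla U\cdot\nabla(g^2/U)\,d\mu\le\int|\nabla g|^2\,d\mu=\EE(g,g)$, and one then inserts $-\LL U/U\ge\phi-b$ (the fact that $U$ only lies in the extended domain is handled by the usual truncation of $U$ and $g$). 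Specialising to $\phi=c\,d(\cdot,x_0)^2$ and using $|\nabla d(\cdot,x_0)|\le 1$ a.e., two facts follow. First, plugging in truncations of $g^2=e^{\lambda d(\cdot,x_0)^2}$ and absorbing the $|\nabla g|^2$ term shows $\int_\XX e^{\lambda d(\cdot,x_0)^2}\,d\mu<\infty$ for every $\lambda<c$. Second, for every $R>0$, $c R^2\int_{\{d(\cdot,x_0)>R\}}g^2\,d\mu\le\EE(g,g)+b\,\mu(g^2)$, i.e. the mass of $g^2$ far from $x_0$ is controlled by $\EE(g,g)$.

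The second step patches this tail control with a \emph{local} inequality on the ball $B(x_0,R)$. The precompact ball carries a super-Poincar\'e (indeed Sobolev) inequality $\int_{B(x_0,R)}g^2\,d\mu\le s\,\EE(g,g)+\beta_R(s)\,\mu(|g|)^2$, and --- this is where the Bakry-Emery lower bound is used --- the weighted Bishop-Gromov comparison bounds the $\mu$-volumes of balls and the local Sobolev constants $\beta_R(\cdot)$ at an at most exponential rate in $R$. Choosing $R\asymp s^{-1/2}$ so as to absorb the tail term $(cR^2)^{-1}\EE(g,g)$, one gets a global super-Poincar\'e inequality with rate $\beta(s)\le\exp(C/s)$ for small $s$, which is equivalent to a defective log-Sobolev inequality; the Poincar\'e hypothesis then tightens it via Rothaus' lemma. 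The main obstacle is exactly this patching: one must carry the curvature bound $K$ (and the dimension) through Bishop-Gromov carefully enough that the resulting rate $\beta(s)$ stays below the $\exp(C/s)$ threshold beyond which a super-Poincar\'e inequality no longer forces a log-Sobolev inequality, and keep track of how the final log-Sobolev constant depends on $\SG(\mu)$, $b$, $c$ and $K$.
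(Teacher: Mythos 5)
Your first step --- that the Lyapunov hypothesis $(H)$, via the classical estimate $\int(-\LL U/U)\,g^2\,d\mu\le\EE(g,g)$, yields $\int\phi\,g^2\,d\mu\le\EE(g,g)+b\,\mu(g^2)$ --- is exactly the content of the paper's Lemma~\ref{lem52}, which the authors phrase equivalently as $\int(-\LL U/U)\,d\nu\le I(\nu|\mu)$ for all $\nu\in M_1(\XX)$. You prove it by integration by parts and truncation, whereas the paper derives it from the large-deviation lower bound of \cite{Wu00b} and a supermartingale argument; both are legitimate, though the paper's version is stated in a setting that does not require a carr\'e du champ. Your overall ``endgame'' --- defective log-Sobolev plus Poincar\'e tightened by Rothaus' lemma --- is also the one used in the paper.

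Where you diverge substantially is in the middle: you go through a super-Poincar\'e inequality patched from local Sobolev estimates on balls and the tail control furnished by $(H)$, invoking weighted Bishop--Gromov to control the local constants $\beta_R$. The paper avoids all of this geometric machinery. Its proof uses only two facts: the elementary inequality $W_2^2(\nu,\mu)\le 2\,\|d(\cdot,x_0)^2(\nu-\mu)\|_{TV}$ (\cite[Proposition 7.10]{Vill03}), and Lemma~\ref{lem52}. Combined with $(H)$ these give a \emph{defective} $W_2I$-inequality,
$$
W_2^2(\nu,\mu)\le\frac{2}{c}\,I(\nu|\mu)+\frac{2}{c}\bigl(b+\mu(\phi)\bigr),
$$
and substituting this into the HWI inequality (\ref{HWI-OV}) together with $2ab\le a^2+b^2$ produces a defective log-Sobolev inequality $H(\nu|\mu)\le A\,I(\nu|\mu)+B$ with explicit $A,B$ depending on $c,K,b,\mu(\phi)$. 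Rothaus' lemma then tightens it using $\SG(\mu)$. Note in particular that the paper never establishes a \emph{tight} $W_2I(c')$, so passing through Proposition~\ref{prop25}(c), which you float as an ``essentially equivalent endgame,'' is not quite what happens.

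Your super-Poincar\'e route has a genuine gap which you yourself flag but do not resolve. First, a small but telling slip: plugging $g^2=e^{\lambda d(\cdot,x_0)^2}$ into $\int\phi\,g^2\,d\mu\le\EE(g,g)+b\mu(g^2)$ gives $|\nabla g|^2\le\lambda^2 d^2 g^2$, hence absorption requires $\lambda^2<c$, so Gaussian integrability is obtained only for $\lambda<\sqrt{c}$, not $\lambda<c$. This matters: the remark following Theorem~\ref{thm52} points out that Wang's criterion \cite{Wang97}, which is precisely the ``local Sobolev plus Gaussian tail'' route, requires $\int e^{\lambda d^2}\,d\mu<\infty$ for some $\lambda>|K|$, and the theorem under discussion is claimed to be \emph{complementary} to Wang's, i.e.\ to cover small $c$ where $\sqrt{c}\le|K|$. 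In that regime the super-Poincar\'e patching you sketch would need to beat Wang's threshold with strictly less integrability; it is far from clear, and indeed not argued, that weighted Bishop--Gromov alone controls the local $\beta_R$ well enough to do so. The paper's HWI argument sidesteps this entirely, which is the reason its theorem holds for every $c>0$. So while your plan is recognisable as a known alternative strategy, as written it does not yet yield the stated theorem, and the key idea you are missing is the direct passage from the Lyapunov bound on $\int d^2\,d\nu$ to a defective $W_2I$ and thence to HWI.
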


Their proofs are based on the following  large deviation result.

\begin{lem}\label{lem52} For every continuous  function $U\ge 1$ in $\dd_e(\LL)$ such
that $-\LL U/U$ is $\mu$-a.e. lower bounded,
\begin{equation}\label{lem52a}
\int -\frac{\LL U}{U} g^2 \,d\mu \le \EE(g,g), \ \forall
g\in\dd(\EE).
\end{equation}
\end{lem}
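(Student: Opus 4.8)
The plan is to derive the inequality from the $L^2$ bound on the Feynman–Kac semigroup associated with the potential $u=-\LL U/U$, exploiting that $U$ itself is a natural supersolution. First I would fix a truncation: since $-\LL U/U$ is only $\mu$-a.e.\ bounded below, write $u_n:=(-\LL U/U)\wedge n$, so that $u_n\in b\BB$ and $u_n\uparrow -\LL U/U$ $\mu$-a.e.; by monotone convergence it suffices to prove $\int u_n g^2\,d\mu\le\EE(g,g)$ for every $g\in\dd(\EE)$ and then let $n\to\infty$. For the bounded potential $u_n$ the Feynman–Kac semigroup $P_t^{u_n}$ of \eqref{Feynman-Kac} is a strongly continuous semigroup of bounded operators on $L^2(\mu)$, and by Lemma \ref{lem21}(1) (the symmetric case, since $(X_t,\pp_\mu)$ is assumed reversible in this section) one has $\|P_t^{u_n}\|_{L^2(\mu)}=e^{t\Lambda(u_n)}$ where $\Lambda(u_n)$ is the variational quantity \eqref{lem21a-2}. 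So the whole matter reduces to showing $\Lambda(u_n)\le 0$, i.e.\ that $\int u_n g^2\,d\mu\le\EE(g,g)$ — which is exactly what we want, so this reformulation is only bookkeeping. The real input must be a genuine a priori bound: I would show $\|P_t^{u_n}\|_{L^2(\mu)}\le 1$ directly, using $U$ as a Lyapunov/supermartingale function.

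The key computation is the following. Since $U\in\dd_e(\LL)$ with $\LL U/U$ bounded below, $M_t:=U(X_t)-U(X_0)-\int_0^t\LL U(X_s)\,ds$ is a local $\pp_\mu$-martingale. Consider $N_t:=U(X_t)\exp\!\big(\int_0^t u_n(X_s)\,ds\big)$; by It\^o's formula (or the product rule for the exponential of a finite-variation process), $N_t$ has drift $\exp(\int_0^t u_n)\,[\,\LL U(X_s)+u_n(X_s)U(X_s)\,]\,ds$, and because $u_n\le -\LL U/U$ pointwise we have $\LL U+u_nU\le 0$, so $N_t$ is a local $\pp_\mu$-supermartingale; being nonnegative it is a true supermartingale, whence $\ee^x\big[U(X_t)\exp(\int_0^t u_n(X_s)\,ds)\big]\le U(x)$, i.e.\ $P_t^{u_n}U\le U$ pointwise. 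Because $U\ge 1$, this already gives the crude bound $P_t^{u_n}\mathbf{1}\le U$, but to get the operator norm on $L^2(\mu)$ I would instead argue via positivity and self-adjointness: $P_t^{u_n}$ is a positivity-preserving self-adjoint operator, and $P_t^{u_n}U\le U$ with $U>0$ forces $\|P_t^{u_n}\|_{L^2(\mu)}\le 1$ by the standard Schur-test/ground-state-transform argument (conjugating by $U$ gives a sub-Markovian, hence $L^2(\mu U^2)$-contractive, semigroup; combined with self-adjointness on $L^2(\mu)$ and $\mu(U^2)<\infty$ one concludes — alternatively one invokes the criterion that a self-adjoint positivity preserving semigroup with an $L^2$ excessive function of full support is a contraction). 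Differentiating $\|P_t^{u_n}g\|_{L^2(\mu)}^2\le\|g\|_{L^2(\mu)}^2$ at $t=0$ for $g$ in a core, exactly as in Part (2) of the proof of Theorem \ref{thm-a2.1}, yields $\int u_n g^2\,d\mu-\EE(g,g)\le 0$, first for $g$ in the core and then for all $g\in\dd(\EE)$ by closability, using $\EE(|g|,|g|)\le\EE(g,g)$.

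The step I expect to be the main obstacle is the passage from the pointwise domination $P_t^{u_n}U\le U$ to the genuine $L^2(\mu)$-operator-norm bound $\|P_t^{u_n}\|_{L^2(\mu)}\le 1$: pointwise control of $P_t^{u_n}$ on the single function $U$ does not, in general, bound its $L^2$ norm without using self-adjointness and positivity in an essential way, and one has to be careful that $U$ need not lie in $L^2(\mu)$ (it is $\mu$-extended-domain, not $L^2$). The clean route is the ground-state substitution $g\mapsto g/U$: the operator $\widetilde P_t g:=U^{-1}P_t^{u_n}(Ug)$ is positivity preserving with $\widetilde P_t\mathbf 1\le\mathbf 1$, hence a contraction on $L^\infty$ and, by duality with its adjoint (which has an invariant sub-probability density proportional to $U^2\,d\mu$ — here finiteness is not even needed for the $L^\infty$–$L^1$ interpolation), a contraction on every $L^p(U^2\mu)$, in particular $L^2(U^2\mu)$; translating back, $P_t^{u_n}$ is a contraction on $L^2(\mu)$. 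One then only needs the elementary localization/truncation arguments to remove the boundedness assumption on $u_n$ and conclude by monotone convergence, which is routine.
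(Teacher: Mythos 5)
Your proposal is correct, but it takes a genuinely different route from the paper's, and the comparison is instructive. Both proofs start from the same supermartingale observation: $N_t=U(X_t)\exp\big(\int_0^t u_n(X_s)\,ds\big)$ has nonpositive drift because $u_n\le -\LL U/U$, so $N$ is a nonnegative local (super)martingale. From there the paper integrates $N_t$ under the tilted initial law $\beta=U^{-1}\mu/\mu(U^{-1})$ to get $\ee_\beta N_t\le\beta(U)<\infty$, takes $\limsup_{t\to\infty}\frac1t\log$ to conclude $F(u_n)\le0$, and then invokes the large deviation \emph{lower bound} of \cite[Theorem B.1, Corollary B.11]{Wu00b} together with the Laplace--Varadhan principle to get $F(u_n)\ge\sup_\nu\{\nu(u_n)-I(\nu|\mu)\}$; this gives $\int u_n\,d\nu\le I(\nu|\mu)$ directly, and (\ref{lem52a}) follows by $\EE(|h|,|h|)\le\EE(h,h)$ and monotone convergence. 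You instead extract from the supermartingale property the ($\mu$-a.e.) pointwise bound $P_t^{u_n}U\le U$ and prove $\|P_t^{u_n}\|_{L^2(\mu)}\le1$ via the ground-state transform $\widetilde P_t g=U^{-1}P_t^{u_n}(Ug)$: this is sub-Markovian, self-adjoint on $L^2(U^2\mu)$ by reversibility, an $L^1(U^2\mu)$-contraction (using $P_t^{u_n}U\le U$ and self-adjointness for nonnegative functions), and hence by interpolation an $L^2(U^2\mu)$-contraction, which translates back to the $L^2(\mu)$ bound on $P_t^{u_n}$; combined with Lemma \ref{lem21}(1) ($\|P_t^{u_n}\|_{L^2(\mu)}=e^{t\Lambda(u_n)}$) this gives $\Lambda(u_n)\le0$, which is exactly the desired inequality. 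Your route is more self-contained and avoids the heavy LDP machinery at the cost of a genuine argument to pass from the single-function bound $P_t^{u_n}U\le U$ to the operator-norm bound -- you correctly identified this as the nontrivial step and the ground-state/interpolation argument you describe handles it; one should be explicit that $\mu$-a.e.\ (rather than truly pointwise) control is what the supermartingale argument under $\pp_\mu$ actually yields, and that the $L^1(U^2\mu)$ step uses self-adjointness of the Feynman--Kac kernel on nonnegative measurable functions (which follows from reversibility without any $L^2$ membership of $U$), but both points are routine. The paper's route is shorter given the LDP toolbox and avoids the interpolation technicalities; yours gives the sharper intermediate statement that the Feynman--Kac semigroup for the potential $(-\LL U/U)\wedge n$ is an $L^2(\mu)$-contraction, which is of independent interest.
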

When $U$ is bounded, this is contained in Deuschel-Stroock
\cite[Lemme 4.2.35]{DS}. \bprf   For any initial law $\beta,$
$$
N_t = U(X_t)\exp\left(-\int_0^t \frac{\LL U}{U}(X_s)ds\right)
$$
is a local $\pp_\beta$-martingale. Indeed, denoting
$A_t:=\exp\left(-\int_0^t \frac{\LL U}{U}(X_s)ds\right)$, It\^o's
formula is $ dN_t =A_t\, [dM_t(U)+ \LL U(X_t)\, dt] -\frac{\LL
U}{U}(X_t) A_t U(X_t)\,dt=A_t\, dM_t(U)$ where $M(U)$ is a local
$\pp_\beta$-martingale.  As $(N_t)$ is nonnegative, it is also a
$\pp_\beta$-supermartingale. Choosing $\beta:=U^{-1}\,\mu/Z$ with
$0<Z=\mu(U^{-1})\le1,$ one sees that for all $t\ge0$
$$
\ee_ {\beta} \exp\left(-\int_0^t \frac{\LL U}{U}(X_s)ds\right)\le
\ee_ \beta N_t \le \beta(U)=1/Z<+\infty.
$$
Let $u_n:=\min\{-\LL U/U, n\}$. The previous estimation implies
that
$$
F(u_n):=\limsup_{t\to\infty} \frac 1t\log
\ee_{\beta}\exp\left(\int_0^t u_n(X_s)ds\right)\le0.
$$
On the other hand by the lower bound of large deviation in
\cite[Theorem B.1, Corollary B.11]{Wu00b} and  Laplace-Varadhan
principle, as in the proof of $(c')\Rightarrow(a)$ in Theorem
\ref{thm-a2.1},
$$
F(u_n)\ge \sup\{\nu(u_n)-I(\nu|\mu);\ \nu\in M_1(E)\}.
$$
Thus $\int u_n d\nu\le I(\nu|\mu)$, which yields to (by letting
$n\to\infty$ and monotone convergence)

\begin{equation}\label{lem52b}
\int -\frac {\LL U} U\, d\nu \le I(\nu|\mu),\ \forall \nu\in
M_1(E).
\end{equation}
This is equivalent to (\ref{lem52a}) by the fact that $\EE(|h|,
|h|)\le \EE(h,h)$ for all $h\in\dd(\EE)$.

Note that one was allowed to apply the large deviation lower bound
\cite[Theorem B.1]{Wu00b} under $\pp_\beta$ since $\beta$ is
absolutely continuous with respect to $\mu$. In addition, in the
symmetric case, \cite[Corollary B.11]{Wu00b} states that the large
deviation rate function is $I(\cdot|\mu);$ it doesn't depend on
$\beta$ under the underlying  assumption that $\pp_\mu$ is
ergodic. As this lower bound holds for the topology of probability
measures weakened by all bounded measurable test functions
(sometimes called $\tau$-topology), one can apply the
Laplace-Varadhan principle to the continuous bounded function
$\nu\mapsto \nu(u_n).$
 \nprf

\bprf[Proof of Theorem \ref{thm52}] It is a combination of the Lyapunov function condition and 
the HWI inequality of Otto-Villani. 

We begin with the following fact (\cite[Proposition 7.10]{Vill03}):
$$
W_2^2(\nu,\mu) \le 2 \|d(\cdot, x_0)^2(\nu-\mu)\|_{TV}.
$$
Now for every function $g$ with $|g|\le \phi(x):=c d(x,x_0)^2$, we have by $(H)$,

$$
\aligned
\int g d(\nu-\mu) &\le \nu(\phi) + \mu(\phi)\\
&\le \int \left(- \frac{\LL U}{U} + b\right) d\nu + \mu(\phi)\\
&\le I(\nu|\mu) + b + \mu(\phi)
\endaligned
$$
where the last inequality follows by Lemma \ref{lem52}. Taking the supremum over all such $g$, we get 

$$
\frac {c}2 W_2^2(\nu,\mu) \le c \|d(\cdot, x_0)^2(\nu-\mu)\|_{TV} \le I(\nu|\mu) + b + \mu(\phi),
$$
which yields to (by the inequality at the beginning)
$$
W_2^2(\nu,\mu) \le \frac 2c I(\nu|\mu) + \frac 2c[b + \mu(\phi)].
$$
Substituting it into the HWI inequality of Otto-Villani (\ref{HWI-OV}), we obtain (using $2ab \le a^2 + b^2$)

\begin{equation}\label{thm52b}
\aligned
H(\nu|\mu) 
&\le 2 \sqrt{\frac 2c I(\nu|\mu) + \frac 2c[b + \mu(\phi)]} \sqrt{I(\nu|\mu)} - \frac K2 \left(\frac 2c I(\nu|\mu) + \frac 2c[b + \mu(\phi)]\right)\\
&\le A I(\nu|\mu) +B
\endaligned
\end{equation}
where 
$$A=(1-\frac K2)\frac 2c + 1,\ \ B=\frac 2c[b + \mu(\phi)](1-\frac K2). $$
Finally by Rothaus' lemma the non-tight log-Sobolev inequality (\ref{thm52b}) together with the spactral gap implies the tight log-Sobolev inequality
$$
H(\nu|\mu) \le [A + (B+2)\SG(\mu)] I(\nu|\mu).
$$
\nprf

\brmk{\rm In the case that the Bakry-Emery's curvature is bounded from below by a negative constant $K$, Wang's criterion \cite{Wang97} says that the log-Sobolev inequality holds if $\int e^{\lambda d^2(x,x_0)} d\mu(x)<+\infty$ for some $\lambda>|K|$. Our Lyapunov condition $(H)$ above is complementary to that result and sharp in order (as seen for $V(x)=|x|^\alpha$ on $\XX=\rr^d$). Furthermore our proof here is completely different and gives an explicit estimate of the log-Sobolev constant. 
}
\nrmk

 \bprf[Proof of Theorem \ref{thm51}] We may assume that $\nu=f\mu$ with $\sqrt{f}\in\dd(\EE)$ (trivial otherwise). For any $a\ge 2$, define $h:\rr\to
 \rr^+$ by

 $$
h(t)=\begin{cases} 0 &\text{if }\ t\le 1;\\
\sqrt{\frac{a+1}{a-1}}(t-1) &\text{if }\ t\in [1,a];\\
\sqrt{t^2-1} &\text{if }\ t\ge a.
\end{cases}
 $$
It is easy to see that $\|h\|_{\mathrm{Lip}}\le
\sqrt{\frac{a+1}{a-1}}$. Decompose
$$
\|\phi(\nu-\mu)\|_{TV}=\int \phi |f-1| d\mu= \int \phi
h^2(\sqrt{f}) d\mu + \int \phi[ |f-1|-h^2(\sqrt{f})] d\mu.
$$
First consider the last term. Since $t^2-1-h^2(t)\le a(t-1)$ for
$t\in [1,a]$, and $=0$ for $t\ge a\ge2$,

$$
\aligned \int \phi[ |f-1|-h^2(\sqrt{f})] d\mu &=\int \phi[1_{\{
f\le 1\}}  (1-f) + 1_{\{ 1\le f\le a^2\}} a(\sqrt{f}-1)
]d\mu\\
&\le a\int \phi |1-\sqrt{f}| d\mu
\endaligned
$$
which is not greater than
$$
\aligned a\|\phi\|_2 \|1-\sqrt{f}\|_2 &=a \|\phi\|_2\sqrt{2}
\sqrt{1-\mu(\sqrt{f})} \le a  \|\phi\|_2
\sqrt{2\Var_\mu(\sqrt{f})}\\
 &\le a\sqrt{ 2 \SG(\mu)}  \|\phi\|_2
\sqrt{I(\nu|\mu)}.
\endaligned
$$
We turn now to bound the crucial first term by means of
(\ref{lem52a}):
$$
\aligned \int \phi h^2(\sqrt{f}) d\mu &\le \int
\left(-\frac{\LL U}{U} + b \right)h^2(\sqrt{f}) d\mu \\
&\le \EE(h(\sqrt{f}), h(\sqrt{f})) + b \|h\|_{\mathrm{Lip}}^2\int (\sqrt{f}-1)^2 d\mu\\
&\le \|h\|_{\mathrm{Lip}}^2\EE(\sqrt{f}, \sqrt{f}) + 2b
\|h\|_{\mathrm{Lip}}^2
\Var_\mu(\sqrt{f})\\
&\le \left(1+2b\SG(\mu) \right)\frac{a+1}{a-1} I(\nu|\mu).
\endaligned
$$
Substituting these two estimates into our previous decomposition,
we obtain (\ref{thm51a}).
\\
For (\ref{thm51b}), noting that with Theorem \ref{thm2}: $\int
|f-1|d\mu\le 2\min\{1, \sqrt{\SG(\mu)I(\nu|\mu)}\},$ we have by
Cauchy-Schwarz inequality and (\ref{thm51a})
\begin{eqnarray*}
&&\|\sqrt{\phi}(\nu-\mu)\|_{TV}^2\\
 &\le& \int|f-1|d\mu \int\phi|f-1|d\mu \\
 &\le& 2\min\left(1, \sqrt{\SG(\mu)I(\nu|\mu)}\right)
    \left[\left(1+2b \SG(\mu)\right)\frac{a+1}{a-1}
    I(\nu|\mu) +a\sqrt{2} \|\phi\|_2
    \sqrt{\SG(\mu)I(\nu|\mu)}\right]\\
    &\le &2\left[\left(1+{2b}\SG(\mu)
    \right)\frac{a+1}{a-1}+a\sqrt{2} \|\phi\|_2\SG(\mu)
     \right] I(\nu|\mu)
\end{eqnarray*}
which gives  (\ref{thm51b}) with $a=2$.
 \nprf

\brmk {\rm When $\sqrt{2}(\SG(\mu)^{-1}+2b)\ge \|\phi\|_2$,
optimizing $a\ge 2$ in the proof of (\ref{thm51b}) above, we get
the slightly better inequality:
$$
\|\sqrt{\phi}(\nu-\mu)\|_{TV}^2 \le
2\left(\sqrt{2}\|\phi\|_2\SG(\mu)+(1+2b\SG(\mu)) + 2^{5/4}
\sqrt{\|\phi\|_2(1+2b\SG(\mu))\SG(\mu)} \right)\,I(\nu|\mu).
$$
Notice that by Lemma \ref{lem52} and condition (H), $b\ge
\mu(\phi)$ (in practice $b$ is much bigger). } \nrmk

From now on the positive constant $C$ may change from one place to
another.
\\
One can do some variation of the proof of (\ref{thm51b}) above.
For every $p>1$ and its conjugate number $q=p/(p-1)$, instead of
Cauchy-Schwarz we apply H\"older inequality to get for $\nu=f\mu$,
$$
\aligned \|\phi^{1/p}(\nu-\mu)\|_{TV}&\le \left(\int
|f-1|d\mu\right)^{1/q}\left(\int \phi |f-1|d\mu\right)^{1/p}
 \\
& \le 2^{1/q} \min\{1, \sqrt{\SG(\mu)I(\nu|\mu)}^{1/q}\}
(C_1I(\nu|\mu) +
 C_2\sqrt{I(\nu|\mu)})^{1/p}\\
&\le C [(1+ I(\nu|\mu))^{2/p}-1]^{1/2}.
 \endaligned
$$
 In other words, we have proved

 \bcor\label{cor52} Under the conditions of Theorem \ref{thm51},
 for any $p>1$, there exists some constant $\kappa>0$ such that
 for $\alpha(r)=\kappa [(1+r^2)^{p/2}-1]$,

\begin{equation}\label{thm51c}
\alpha\left( \|\phi^{1/p}(\nu-\mu)\|_{TV}\right) \le I(\nu|\mu),\
\forall \nu\in M_1(\XX).
\end{equation}
 \ncor

 \bcor\label{cor53} Let $\mu=e^{-V} dx/Z$ be a probability
 measure where
 $V\in C^\infty(\XX)$ is bounded from below and $|\nabla V|^2\in L^2(\mu).$ Let $\LL=\Delta -\nabla V\cdot
\nabla$ be the generator of the diffusion $(X_t)$ on the
non-compact connected complete  Riemannian manifold $\XX$. Assume
that for some $p>1,$
$$d(x,x_0)\le C (1
+|\nabla V|^2(x))^{1/p},\ \forall x\in\XX$$ and
$$
\gamma:=\limsup_{d(x, x_0)\to  \infty} \frac{\Delta V(x)}{|\nabla
V|^2(x)} <1.
$$
Then w.r.t. the Riemannian metric $d$, there exists $\kappa>0$
such that with $\alpha (r)=\kappa
    [(1+r^2)^{p/2}-1]$,
    \begin{equation}\label{cor53b}
\alpha(W_1(\nu,\mu)) \le I(\nu|\mu), \ \forall \nu\in M_1(\XX).
    \end{equation}
    In particular for every Lipschitz function $u$ with $\|u\|_{\mathrm{Lip}}\le
    1$ and any initial law $\beta$ with $d\beta/d\mu\in L^2(\mu),$
    \begin{equation}\label{cor53c}
\pp_\beta\left(\frac 1t \int_0^t u(X_s)ds > \mu(u)+r\right)\le
\left\|\frac{d\beta}{d\mu}\right\|_2 \exp\left(-t\kappa
    [(1+r^2)^{p/2}-1] \right), \ \forall t, r>0.
    \end{equation}
 \ncor

\bprf Let $\gamma'\in (\gamma,1)$ and $\lambda, \delta\in (0,1)$
sufficiently small so that $\lambda-\lambda^2>\gamma'  \lambda+
\delta$. For $U=e^{\lambda V},$ we have
$$
-\frac {\LL U}{U} = -\lambda \LL V - \lambda^2|\nabla V|^2=
(\lambda -\lambda^2) |\nabla V|^2 -\lambda \Delta V\ge \delta (1+
|\nabla V|^2)-b
$$
where $b:=\delta +\sup_\XX \left(\lambda \Delta V - \gamma'\lambda
|\nabla V|^2\right)$ is finite under our assumptions. Thus (H) is
satisfied with $\phi=\delta (1+|\nabla V|^2)$ which is in
$L^2(\mu)$ under our assumptions. On the other hand our
assumptions imply that $\phi$ tends to infinity at infinity. Hence
$(1-\LL)^{-1}$ is compact on $L^2(\mu)$ and $\SG(\mu)<\infty.$
Noting that
$$
W_1(\nu, \mu)\le \inf_{x_0\in\XX} \|d(\cdot,x_0) (\nu-\mu)\|_{TV},
$$
the statement now follows directly from Theorem \ref{thm51} and
Corollary \ref{cor52}.
 \nprf

\bexa\label{exam51} {\rm Let $\XX=\rr^n$, $V(x)=C |x|^\beta$ for
$|x|>1$ where $\beta\ge1, C>0$. Then (H) is satisfied for
$\phi=\delta (1+|\nabla V|^2)\sim C |x|^{2(\beta-1)}$ (when $|x|$
large).

\begin{enumerate}[(i)]
    \item If $\beta>3/2$, then the condition in Corollary
    \ref{cor53} is verified with $p=2(\beta-1)>1$, so we have
    (\ref{cor53c}) for
    Lipschitz observable $u$ with $p=2(\beta -1)$. Then we have Gaussian behavior for
    small  $r$, and even a super-Gaussian tail for large  $r$
    whenever $\beta>2$.

    \item Let $\beta\in [1, 3/2]$. Then for
    $\psi=(1+|x|)^{\beta-1}$, we have by Theorem
    \ref{thm51}(\ref{thm51b}),
    $$
\|\psi (\nu-\mu)\|_{TV}^2 \le C I(\nu|\mu).
    $$
Then the Gaussian deviation inequality holds true for the
observable $u$ satisfying $|u|\le C(1+|x|)^{\beta-1}$.

\item If $\beta=2$ (Ornstein-Uhlenbeck process), the inequality
(\ref{thm51b}) for $\sqrt{\phi}\sim C |x|$ (proved in (i)) becomes
the correct one in order: indeed if $\psi(x)\gg |x|$ at infinity
with $\mu(\psi)<+\infty$, one cannot hope that
$$
\|\psi(\nu-\mu)\|_{TV}^2\le C I(\nu|\mu),\ \forall \nu
$$
since by Theorem \ref{thm-a2.1}, this would imply that
$$
\ee_{\mu} \exp\left(\lambda \int_0^1 \psi(X_s) ds\right)\le
e^{\lambda \mu(\psi)+ C\lambda^2/2}, \ \forall \lambda\in \rr
$$
which yields, integrating w.r.t. $N(0,\sigma^2)(d\lambda)$ with
variance $\sigma^2<1/C$,
$$
\ee_{\mu} \exp\left(\delta \left(\int_0^1 \psi(X_s)
ds\right)^2\right)<+\infty
$$
for some $\delta>0.$ But this is impossible.
\end{enumerate}
} \nexa

We conclude by an example of jump process.

\bexa{\rm ($M/M/\infty$ queue). In this example $\XX=\nn$, $\mu$
is the Poisson measure with mean $\lambda>0$ and the Dirichlet
form is
$$
\EE(h,h) = \sum_{n\in \nn} (h(n+1)-h(n))^2 \mu(n)
$$
The associated generator is
$$
\LL h(n)= \lambda (h(n+1)-h(n)) + n(h(n-1)-h(n)), \ \forall n\ge0
$$
(with the convention that $h(-1)=h(0)$). Let $U(n)=e^{c n}$ where
$c>0$. We have
$$
-\frac{\LL U}{U} (n) = n (1-e^{-c}) -(e^c-1).
$$
Thus condition (H) is satisfied, and we have by Theorem
\ref{thm51} that for $\psi(n):=\sqrt{1+n}$,
$$
\|\psi (\nu-\mu)\|_{TV}^2 \le C I(\nu|\mu), \ \forall \nu\in
M_1(\nn).
$$
By Theorem \ref{thm-a2.1}, this gives the Gaussian deviation
inequality for any observable $u$ so that $|u(n)|\le C
\sqrt{1+n}$. See Joulin \cite{JouPhD} and Liu-Ma \cite{p-LiuMa06}
for previous studies on deviation inequalities of this model. Note
that they only obtain Poisson tail by their approach for the same
test function. Remark also that our result provides exponential
tail for $u(n)=n$, which is close of the conjectured Poisson
behavior.
 }\nexa


\end{document}